\DeclareMathAlphabet{\mymathbb}{U}{BOONDOX-ds}{m}{n}
\def\smalloverbrace#1{\mathop{\vbox{\m@th\ialign{##\crcr\noalign{\kern3\p@}%
\tiny\downbracefill\crcr\noalign{\kern3\p@\nointerlineskip}%
$\hfil\displaystyle{#1}\hfil$\crcr}}}\limits}
\newcommand{\downset}[2][ ]{%
\downarrow \! \{ #2 \}_{#1}
}
\newcommand{\downsetdisplay}[2][ ]{%
\downarrow \!\!\! \{ #2 \}_{#1}
}
\newcommand{\upset}[2][ ]{%
\uparrow \! \{ #2 \}_{#1}
}
\newcommand{\upsetdisplay}[2][ ]{%
\uparrow \!\! \{ #2 \}_{#1}
}
\DeclareSymbolFont{extraup}{U}{zavm}{m}{n}
\DeclareMathSymbol{\varspade}{\mathalpha}{extraup}{85}
\DeclareMathSymbol{\varheart}{\mathalpha}{extraup}{86}
\DeclareMathSymbol{\vardiamond}{\mathalpha}{extraup}{87}
\DeclareMathSymbol{\varclub}{\mathalpha}{extraup}{88}
\newcommand{\unsim}{\mathord{\sim}}
\newcommand{\lc}{\left\{}          
\newcommand{\rc}{\right\}}         
\newcommand{\lav}{\left\vert}      
\newcommand{\rav}{\right\vert}     
\newcommand{\ev}[1]{( #1 )} 
\newcommand{\ew}[1]{\left( #1 \right)}
\newcommand{\Z}{\mathbb{Z}}
\newcommand{\C}{\mathbb{C}}
\newtheorem{theorem}{Theorem}[section]
\newtheorem{lemma}[theorem]{Lemma}
\newtheorem{corollary}[theorem]{Corollary}
\newtheorem{remark}{Remark}
\numberwithin{equation}{section}
\theoremstyle{definition}
\newtheorem{definition}{Definition}
\newtheorem{proposition}[theorem]{Proposition}
\newtheorem{example}{Example}
\title{Partitions of an Eulerian Digraph into Circuits}
\author[1]{Joshua Cooper}
\author[2]{Utku Okur}
\affil[1]{Department of Mathematics, University of South Carolina, U.S.A. (\href{mailto:email}{cooper@math.sc.edu})}
\affil[2]{Department of Mathematics, University of South Carolina, U.S.A. (\href{mailto:email}{uokur@email.sc.edu})}
\date{\today}
\newcommand{\circled}[1]{
\tikz[baseline=(char.base)]{
\node[shape=circle, draw, inner sep=1.5pt, font=\normalsize ] (char) {#1};
}
}
\begin{document}
\maketitle

\begin{abstract}
We investigate a cancellation property satisfied by a connected Eulerian digraph $D$. Namely, unless $D$ is a single directed cycle, we have $\sum_{k\geq 1} (-1)^{k} f_k\ev{D} =0 $, where $f_k\ev{D}$ is the number of partitions of Eulerian circuits of $D$ into $k$ circuits. This property is a consequence of the fact that the Martin polynomial of a digraph has no constant term. We provide an alternative proof by employing Viennot's theory of Heaps of Pieces, and in particular, a bijection between closed trails of a digraph and heaps with a unique maximal piece, which are also in bijection with unique sink orientations of the intersection graphs $G_a$ of partitions $a$ of $E(D)$ into cycles. The argument considers the partition lattice of the edge set of a digraph $D$, restricted to the join-semilattice $T(D)$ induced by elements whose blocks are connected and Eulerian. The minimal elements of $T\ev{D}$ are exactly the partitions of $D$ into cycles, and the up-set of a minimal element $a\in T(D)$ is shown to be isomorphic to the bond lattice $L(G_a)$. Using tools developed by Whitney and Rota, we perform M\"{o}bius inversion on $T(D)$ and obtain the claimed cancellation.

As a consequence of this alternative proof, we relate the Martin polynomial of a digraph directly to the chromatic polynomials of the intersection graphs of partitions of $D$ into cycles. Finally, we apply the cancellation property in order to deduce the classical Harary-Sachs Theorem for graphs of rank $2$ from a hypergraph generalization thereof, remedying a gap in a previous proof of this.

\textit{Keywords: digraphs, hypergraphs, characteristic polynomial, partially ordered sets}

\textit{Mathematics Subject Classifications 2020: Primary 05C45; Secondary 05C20, 06A07, 05C65.}
\end{abstract}

\hypersetup{linkcolor=black} 

\tableofcontents

\hypersetup{linkcolor=blue}

\section{Introduction}
The present work is motivated by a curious cancellation property that is claimed in \cite{cc2}:
\begin{equation}
\label{eqn:cancellation}
\sum_{k\geq 1} \ev{-1}^{k} f_k\ev{D} = 0
\end{equation}
for any Eulerian digraph $D$ which is not a cycle, where $f_k\ev{D}$ is the number of partitions of (the edge set of) Eulerian circuits of $D$ into $k\geq 1$ circuits.

One way of deriving (\ref{eqn:cancellation}) is the evaluation at $t=0$ of the Martin polynomial, the generating function in $t-1$ of partitions into circuits (q.v. \Cref{def:martin_circuit_partition}). As this polynomial has no constant term by \cite[Theorem 3.2, p.~5]{lasvergnas}, the cancellation property in question follows. 

In \Cref{sec:bond_lattice_mobius}, we provide an alternative proof of (\ref{eqn:cancellation}), by connecting some machinery developed in the literature. For a graph $G$, and a fixed vertex $x$, let $\overline{\mathcal{O}}^{x}\ev{G}$ be the set of acyclic orientations of $G$ with unique sink $x$. In \cite[p.~325]{viennot}, the theory of Heaps of Pieces is developed, and it is noted that acyclic orientations of $G$ with unique sink $x$ are in bijection with full pyramids with maximal piece $x$, where the pieces are vertices of $G$ and concurrence relation is sharing an edge. On the other hand, the equinumerosity of the acyclic orientations $\overline{\mathcal{O}}^{x}\ev{G}$ with unique sink $x$ (independent of the choice of the sink $x$) and of NBC bases $\mathcal{N}\ev{G}$ of the bond lattice $L\ev{G}$ (terminology defined below) follows by combining \cite{zaslavsky} and \cite{whitney}. In fact, explicit bijections between NBC bases of $\mymathbb{1}_{L\ev{G}}$ and acyclic orientations with unique sink were given in \cite{benson} and \cite{sagan_gebhard}. In \Cref{sec:bond_lattice_mobius}, we give a summary of the construction of the bijection $\mu: \mathcal{N}\ev{ G } \rightarrow \overline{\mathcal{O}}^{x}\ev{G} $ defined in \cite[Theorem 4.2, p.~11]{benson} (\Cref{thm:NBC_bij_explicit} below). Afterwards, we recursively define bijections $\begin{tikzcd}
\mathcal{N}\ev{ G } \arrow[r,shift left=2pt,"\phi"] & \arrow[l,shift left=2pt,"\psi"] \mymathbb{P}^{x}\ev{ G } 
\end{tikzcd}$ in the language of Heaps of Pieces, which yield bijections:
$$
\begin{tikzcd}
\mathcal{N}\ev{ G } \arrow[r,shift left=2pt,"\phi'"] & \arrow[l,shift left=2pt,"\psi'"] \overline{\mathcal{O}}^{x}\ev{ G } 
\end{tikzcd}
$$
It turns out that $\phi'$ and $\psi'$ are alternative (recursive) definitions of $\mu$ and $\pi$ of \cite{benson}. We show in \Cref{sec:bond_lattice_mobius} Equation (\ref{eqn:equal_bij}), that $\phi'$ is equal to $\mu$, which implies that $\pi = \psi'$. 

Further, we can find a bijective mapping between NBC bases and critical configurations of a graph in \cite{biggs_winkler}, where the chromatic polynomials of ``wheel graphs" are calculated, as an application. A closely related result is that the total number $|\overline{\mathcal{O}}\ev{G}|$ of acyclic orientations of $G$ is the coefficient $[t] P_{G}\ev{ t } $ of the chromatic polynomial $P_{G}\ev{ t }$ of $G$, which is also given by the number $|\mathcal{N}^{*}\ev{ G }|$ of NBC sets. In fact, a bijection $\overline{\mathcal{O}}\ev{G}\rightarrow \mathcal{N}^{*}\ev{ G }$ between acyclic orientations and NBC sets is given in \cite{sagan_blass}. It is also known that the number $|\overline{\mathcal{O}}^{x}\ev{G}|$ of acyclic orientations of $G$ with unique sink $x$ is given by $|P_{G}\ev{ -1 }|$ (c.f. \cite{stanley_2}).

The celebrated NBC theory finds it root in the work of Whitney (\cite{whitney}), which calculates the chromatic polynomial of a graph $G$, by the counts of the ``no broken circuit (NBC) sets" of $G$ (q.v.~\Cref{def:NBC_sets} below). A generalization of Whitney's theorem is due to Rota (\cite[Proposition 1, p.~358]{rota}), the NBC Theorem (q.v.~\Cref{thm:rota_NBC_thm} below), which calculates the M\"{o}bius function of a finite geometric lattice $L$, via the NBC sets of $L$ and implies Whitney's Theorem, when $L = L(G)$ is the ``bond lattice" of $G$ (q.v.~\Cref{def:bond_lattice_graph} below). The NBC Theorem was further generalized to the NBB Theorem (\cite[Theorem 1.1, p.~2]{sagan}) by Blass and Sagan. We can find an application of the NBC and NBB Theorems in the calculation of the M\"{o}bius function of the ``noncrossing bond lattice" in \cite{hallam}. See \cite{stanley, sagan, rota, whitney, bernardi, benson, zaslavsky, bernardi_nadeau} for other numerous and general connections between the chromatic polynomial, Tutte polynomial, NBC bases and acyclic orientations with unique sink, for a graph $G$. Also, see \cite{fu_peng_wang_yang} for a bijection between acyclic orientations and NBC sets in the more general setting of a matroid. 

In \Cref{sec:partition_lattice_eulerian_digraph}, we consider the partition lattice of the edge-set $E\ev{D}$ of an Eulerian digraph $D$. We define the join-semilattice $T\ev{D}$ induced by partitions of $E\ev{D}$ into Eulerian sub-digraphs, where the minimal elements are exactly partitions of $E\ev{D}$ into cycles. By an application of the theory outlined in \Cref{sec:bond_lattice_mobius}, we obtain an alternative proof of (\ref{eqn:cancellation}). The alternative approach we take allows us to derive an identity which expresses $t$ times the Martin polynomial of a digraph as a signed sum over the chromatic polynomials of the intersection graphs of partitions of $D$ into cycles.

The cancellation property (\ref{eqn:cancellation}) of a digraph $D$ is independently interesting, and here it allows us to deduce the classical Harary-Sachs Theorem for graphs from the more general version for hypergraphs given in \cite{cc1} and \cite{co2}. The Harary-Sachs Theorem for hypergraphs expresses the coefficients of the characteristic polynomial $\chi_{\mathcal{H}}\ev{t}$ of a hypergraph of rank $k\geq 2$ (i.e., each edge has $k$ vertices), in terms of its constitutive \textit{infragraphs}, namely, multi-hypergraphs where the degree of each vertex is divisible by $k$ (q.v.~\Cref{def:multi_hyper_graph} and \Cref{def:veblen_multi_hypergraph} below). A connected infragraph is \textit{decomposable} if it contains a proper non-trivial sub-infragraph, and \textit{indecomposable} otherwise. When we specialize to rank $k=2$, the contribution of any infragraph $X$ to the coefficients of the characteristic polynomial, (called $w_{|V\ev{\mathcal{H}}|}\ev{X}$ in \Cref{thm:harary_sachs_hyper}), is zero, unless $X$ is \textit{elementary}, in other words, each component of $X$ is indecomposable, i.e., a cycle of length $\geq 3$ or an edge. This was claimed in \cite{cc2}, where (\ref{eqn:cancellation}) is erroneously deduced as an easy application of the Inclusion-Exclusion principle. In \Cref{sec:partition_lattice_eulerian_digraph}, we correct this argument by providing a full proof and in \Cref{sec:deduction}, we deduce the Harary-Sachs Theorem for graphs from the Harary-Sachs Theorem for hypergraphs, by an application of (\ref{eqn:cancellation}). 

It should be noted that when we consider hypergraphs of rank $k=3$, the contribution of a decomposable infragraph to the coefficients of the characteristic polynomial can be non-zero. Regardless, there appears to be a cancellation property for hypergraphs as well. Clearly, just because the characteristic polynomial $\chi_\mathcal{H}\ev{t}$ is a \textit{polynomial}, it follows that, for any $d\geq N$, where $N$ is the degree of $\chi_\mathcal{H}\ev{t}$, the sum of the contributions of all infragraphs with $d$ edges, to the codegree $d$th term of $\chi_\mathcal{H}\ev{t}$ is zero. However, a stronger phenomenon is noted in \cite[Remark 14, p.~30]{co2}: We have $ w\ev{X} = 0 $, for any infragraph $X$ such that $|E\ev{X}| > |V\ev{X}| \ev{ k-1 }^{|V\ev{X}|-1}$. Even though we have an algebraic confirmation of this fact, via a particular use of a theorem on resultants (\cite[Theorem 2.3, p.~86]{usingalg}), a combinatorial proof as in the case of graphs would be interesting, as it could lead to a better understanding of the algebraic multiplicity of zero in the characteristic polynomial, i.e., the highest power of $t$ dividing $\chi_{\mathcal{H}}(t)$, which is directly tied to the last non-zero contribution by infragraphs.

\section{The Bond Lattice and Heaps of Pieces}
\label{sec:bond_lattice_mobius}

\subsection{Preliminaries}
In this paper, we use $A\sqcup B$ to denote the union of two disjoint sets $A,B$. The \textit{cardinality} of a finite set $A$ is denoted by $|A|$. For a finite set $A$ and an integer $0\leq k\leq |A|$, the $k$-subsets of $A$ is denoted $\binom{ A }{k}$. 

We start with the definition of a multi-(hyper)graph of rank $k\geq 2$ and of a multi-digraph, following the notation of \cite[p.2]{bondy}. Specializing to rank $k=2$ yields multi-graphs. 
\begin{definition}[Multi-hypergraphs and parallel edges]
\label{def:multi_hyper_graph}
A \textit{multi-hypergraph} $X$ of rank $k\geq 2$ is a triple $X = \ev{ V\ev{X}, E\ev{X}, \phi }$, where $V\ev{X}, E\ev{X}$ are finite sets, and $\phi: E \rightarrow \binom{V}{k}$ is a mapping, such that the coordinates of $\phi\ev{e}$ are pairwise distinct, for each $e\in E\ev{X}$ (there are no degenerate edges or loops.) The multi-hypergraph $X$ is called a \textit{multi-graph} if it is of rank $2$. 
\begin{enumerate}
\item If $X = \ev{ V\ev{X}, E\ev{X}, \phi }$ is given such that $\phi$ is injective, then $X$ is a \textit{simple} hypergraph. We omit $\phi$, whenever $X$ is a simple graph. We also sometimes omit ``simple" and write ``graph", to mean a simple graph. 
\item Two edges $e_1, e_2 \in E\ev{X}$ are \textit{parallel}, denoted $e_1 \sim e_2$, provided $\phi\ev{e_1} = \phi\ev{e_2}$. Given a multi-hypergraph $X$ and an edge $e\in E\ev{X}$, then the \textit{multiplicity} $m_X\ev{e}$ of $e$ is the cardinality $| [e]_{\unsim} | = | \{ f\in E\ev{X}: \phi\ev{f} = \phi\ev{e} \} | $ of the equivalence class of edges parallel to $e$. For each $v_1,\ldots,v_k \in V\ev{X}$, we write $m_X\ev{ \{ v_1, \ldots, v_k \} } = | \{ e\in E\ev{X} : \phi\ev{e} = \{ v_1, \ldots, v_k \} \} | $. The \textit{degree} of a vertex $u$ is defined as, $\textup{deg}_X\ev{u} = |\{ e\in E\ev{X}: u\in e\} | $. 
\item For a simple graph $G$ and a subset $S\subseteq V\ev{G}$ of vertices, we write $G[S]$ for the subgraph induced by $S$.
\end{enumerate}
\end{definition}

Next, we define multi-directed graphs, or shortly, digraphs: 
\begin{definition}[Multi-digraphs and orientations of a multi-graphs]
\label{def:orientations}
A \textit{multi-digraph} $D=\ev{ V\ev{D}, E\ev{D}, \psi}$ of rank $2$ is a triple, where $V\ev{D}, E\ev{D}$ are finite sets, and $\psi: E\ev{D} \rightarrow V\ev{D} \times V\ev{D}$ is a function, such that the coordinates of $\psi\ev{e}$ are distinct, for each edge $e\in E\ev{D}$. (There are no loops.) Sometimes, we shortly write ``digraph", instead of multi-digraph.
\begin{enumerate}
\item Two edges $e_1, e_2 \in E\ev{D}$ are \textit{parallel}, denoted $e_1 \unsim e_2$, provided $\psi\ev{e_1} = \psi\ev{e_2}$. (We use the same notation for the equivalence relation of parallelism in a digraph.) 
\item Given a digraph $D$ and an edge $e\in E\ev{D}$, then the \textit{multiplicity} $m_D\ev{e}$ of $e$ is the cardinality of the set $[e]_{\unsim} = \{ f\in E\ev{D}: \psi\ev{f} = \psi\ev{e} \}$ of edges parallel to $e$. Also, we define $m_D\ev{u,v} = | \{ e\in E\ev{D} : \psi\ev{e} = \ev{u,v} \}|$, for each $u,v\in V\ev{D}$.
\item The \textit{in-degree} of a vertex $u$ is defined as $\textup{deg}_D^{-}\ev{u} = \sum_{v\in V\ev{D}} m\ev{ v,u }$. The \textit{out-degree} of $u$ is $\textup{deg}_D^{+}\ev{u}:=\sum_{v\in V\ev{D}} m\ev{ u,v }$. In particular, we have $\textup{deg}_D\ev{u} = \textup{deg}_D^{-}\ev{u} + \textup{deg}_D^{+}\ev{u} $.
\item Given an undirected multi-graph $X=(V(X),E(X),\phi)$ of rank $2$ and a digraph $O=(V(O),E(O),\psi)$, then $O$ is an \textit{orientation} of $X$, provided $V\ev{O} = V\ev{X}$ and $E\ev{O} = E\ev{X}$ and we have $\phi\ev{ e } = \theta\ev{ \psi\ev{ e } }$ for each $e\in E\ev{X} = E\ev{O}$, where 
\begin{align*}
\theta: V\ev{X} \times V\ev{X} &\rightarrow \binom{V\ev{X}}{2} \\ 
\ev{ v_1, v_2} & \mapsto \{ v_1, v_2\}
\end{align*}
is the "forgetful" mapping. The set of orientations of $X$ is denoted as $\mathcal{O}\ev{X}$. So, we have $|\mathcal{O}\ev{X}| = 2^{|E\ev{X}|}$. 
\end{enumerate}
\end{definition}

\begin{definition}
\label{def:unique_sink_orientations}
For a digraph $D$ and a vertex $x\in V\ev{D}$, then $x$ is a \textit{sink} of $D$, provided $\textup{deg}^{+}_D\ev{x} = 0$. We say that $x$ is the \textit{unique sink} of $D$, if $\{ y\in V\ev{D} \setminus \{ x \} : \textup{deg}_D^{+}\ev{y} = 0 \} = \emptyset$. For a graph $G$ and a vertex $x\in V\ev{G}$, we write $\overline{\mathcal{O}}^{x}\ev{G}$ for the set of orientations of $G$ with unique sink $x$. By definition, we have
$$
\bigsqcup_{x\in V\ev{G}} \overline{\mathcal{O}}^{x}\ev{G} \subseteq \mathcal{O}\ev{G}
$$
\end{definition}

Next, we have some preliminaries about posets and lattices. 
\begin{definition}
\label{def:posets_prelim}
Let $\ev{ \Omega, \leq }$ be a poset. 
\begin{enumerate}
\item The \textit{down-set} of an element $a$ is defined as, $ \downset[\Omega]{a} := \{ b\in \Omega: b \leq a \}$. The \textit{up-set} of $a$ is the set $ \upset[\Omega]{a} := \{ b\in \Omega: b \geq a \}$. 
\item $\max\ev{ \Omega }$ denotes the maximal elements of $\Omega$. If the maximal element is unique, then it is denoted $\mathbbm{1}_\Omega$. $\min\ev{\Omega}$ denotes the minimal elements of $\Omega$. If the minimal element is unique, then it is denoted $\mymathbb{0}_\Omega$. 	
\item Given a poset $\Omega$ and two elements $x,y\in \Omega$, then $y$ \textit{covers} $x$, denoted $x\lessdot y$, provided $x < y$ and there is no $z\in \Omega$ such that $x<z<y$.
\end{enumerate}
Assuming $\Omega$ has a unique minimal element $\mymathbb{0}_\Omega$, 
\begin{enumerate}
\item A function $\textup{rk}: \Omega\rightarrow \Z^{\geq 0} = \{0,1,2,\ldots,\} $, is a \textit{rank function}, provided $\textup{rk}\ev{\mymathbb{0}_\Omega}=0$ and $x \lessdot y $ if and only if $\textup{rk}\ev{x} + 1 = \textup{rk}\ev{y}$, for each $x,y \in \Omega$. A poset with a unique minimal element is a \textit{ranked poset}, if it has a rank function. By induction on the number of elements, it can be shown that the rank function is unique, provided it exists. 
\item The \textit{rank} of $\Omega$ is defined as $\textup{rk}\ev{\Omega} := \max_{x\in \Omega} \textup{rk}\ev{x}$. The \textit{characteristic polynomial} of a ranked poset is $\sum_{x\in \Omega} \mu_\Omega\ev{\mymathbb{0},x} t^{\textup{rk}\ev{\Omega} - \textup{rk}\ev{x}}$ (c.f. \cite{sagan_hallam})
\end{enumerate}
\end{definition}
We omit the subscripts of $\downset[\Omega]{a}, \upset[\Omega]{a}, \mymathbb{1}_\Omega$, $\mymathbb{0}_\Omega$, when it is clear from the context. As defined in \cite[p.~305]{vanlint}, a poset $L$ is a lattice if every finite subset $S\subseteq L$ has a meet and a join. Note that a finite lattice $L$ necessarily has a unique minimal element $\mymathbb{0}$ and a unique maximal element $\mathbbm{1}$. Furthermore, an element $p\in L$ is a \textit{point} or \textit{atom}, if $\mymathbb{0} \lessdot p$. An element $h\in L$ is a \textit{copoint}, if $h\lessdot \mathbbm{1}$. 

\begin{definition}

\phantom{a}

\begin{enumerate}
\item Let $\ev{ \Omega_1, \leq_1}$ and $\ev{ \Omega_2, \leq_2}$ be two posets. The \textit{product} of $\Omega_1$ and $\Omega_2$, denoted $\Omega_1 \times \Omega_2$, is the poset $\ev{ \Omega_1\times \Omega_2, \leq}$, where $\ev{x_1,x_2}\leq \ev{y_1,y_2}$ if and only if $x_1\leq_1 y_1$ and $x_2\leq_2 y_2$, for $x_1,y_1\in \Omega_1$ and $x_2,y_2\in \Omega_2$.
\item Given a poset $\ev{\Omega,\leq}$, then the \textit{dual} of $\Omega$ is the poset $ \Omega^{d} = \ev{\Omega, \leq_d}$, where $a\leq_d b $ if and only if $b\leq a$, for each $a,b\in \Omega$.
\end{enumerate}
\end{definition}
It is easy to see that the M\"{o}bius matrix of the dual poset $\Omega^d$ is the transpose of the M\"{o}bius matrix of $\Omega$. In other words, 
$$ \mu_{\Omega^d}\ev{ a,b } = \mu_{\Omega}\ev{ b,a }$$
for each $a,b \in \Omega$. 

\begin{lemma}
\label{lem:mu_down_set}
Let $\ev{\Omega,\leq}$ be a finite ranked poset. Then, we have
$$ \mu_\Omega\ev{ a, b } = \mu_{ \downset[\Omega]{b} }\ev{ a, b } $$
for each $a,b\in \Omega$.
\end{lemma}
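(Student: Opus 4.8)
The plan is to reduce the identity to the elementary fact that $\mu_\Omega(a,b)$ depends only on the interval $[a,b]$ considered as a poset in its own right, and then to observe that this interval is literally unchanged when one passes from $\Omega$ to $\downset[\Omega]{b}$. First I would dispose of the case $a\not\leq b$: here $\mu_\Omega(a,b)=0$ by convention, and likewise $\mu_{\downset[\Omega]{b}}(a,b)=0$ (either $a\notin\downset[\Omega]{b}$, or at any rate $a\not\leq b$ inside $\downset[\Omega]{b}$, since $\downset[\Omega]{b}$ carries the induced order). So assume from now on that $a\leq b$, whence $a\in\downset[\Omega]{b}$.

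Next I would establish the general claim: for any finite poset $P$ and any $u\leq v$ in $P$, the value $\mu_P(u,v)$ is determined by the induced subposet carried by the interval $[u,v]_P=\{w\in P:u\leq w\leq v\}$; equivalently, if $[u,v]_P$ and $[u',v']_{P'}$ are isomorphic as ordered sets via a map sending $u\mapsto u'$ and $v\mapsto v'$, then $\mu_P(u,v)=\mu_{P'}(u',v')$. This follows directly from the recursion $\mu_P(u,u)=1$ and $\mu_P(u,v)=-\sum_{u\leq w<v}\mu_P(u,w)$ for $u<v$, by induction on $\textup{rk}(v)-\textup{rk}(u)$ (or simply on $|[u,v]_P|$): the sum ranges over $w\in[u,v)_P$ only, and for each such $w$ one has $[u,w]_P\subseteq[u,v]_P$ with strictly smaller rank difference, so the inductive hypothesis applies to each summand $\mu_P(u,w)$, and these summands together with the index set $[u,v)_P$ are intrinsic to $[u,v]_P$.

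Finally I would verify that $[a,b]_{\downset[\Omega]{b}}=[a,b]_\Omega$ as ordered sets: an element $c$ belongs to the left side iff $c\leq b$ and $a\leq c\leq b$, which simplifies to $a\leq c\leq b$, i.e. $c\in[a,b]_\Omega$; and the two orders agree because $\downset[\Omega]{b}$ is an induced subposet of $\Omega$. Applying the general claim with $P=\Omega$ and $P'=\downset[\Omega]{b}$ then yields $\mu_\Omega(a,b)=\mu_{\downset[\Omega]{b}}(a,b)$, completing the proof. I do not anticipate any genuine difficulty here: this is a standard bookkeeping fact, and the only points needing attention are the degenerate case $a\not\leq b$ and the remark that the M\"obius recursion never reaches outside $[a,b]$, so that $\mu$ restricted to an interval really is intrinsic to that interval. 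If one wished to avoid the general claim, one could instead induct directly on $\textup{rk}_\Omega(b)-\textup{rk}_\Omega(a)$, using that $\downset[\Omega]{b}$ is again a finite ranked poset under the restricted rank function and that $\downset[{\downset[\Omega]{b}}]{c}=\downset[\Omega]{c}$ whenever $c\leq b$, thereby matching the defining sum for $\mu_{\downset[\Omega]{b}}(a,b)$ term by term with that for $\mu_\Omega(a,b)$.
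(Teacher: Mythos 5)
Your proof is correct and rests on the same observation as the paper's: the M\"obius recursion for $\mu_\Omega\ev{a,b}$ only ever references elements of the interval $[a,b]$, which lies entirely inside $\downset[\Omega]{b}$. The paper carries this out as a direct descending induction on $\textup{rk}\ev{a}$ using the recursion $\mu\ev{a,b}=-\sum_{a<c\leq b}\mu\ev{c,b}$ --- precisely the term-by-term matching you sketch in your final sentence --- so routing the argument through the general ``$\mu$ is intrinsic to the interval'' lemma is just a cleaner packaging of the same idea.
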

\begin{proof}
If $a\not \leq b$, then the statement vacuously holds, as both sides of the equation are equal to zero. Assuming $a\in \downset[\Omega]{b}$, we show the statement by (descending) induction on the rank of $a$. The image of $\downset[\Omega]{b}$ under the rank function $\textup{rk}$ is a closed interval of integers. In other words, $\{ \textup{rk}\ev{ c }: c \in \downsetdisplay[\Omega]{b} \} = [0 , w]$, where $\textup{rk}\ev{ b } = w$ is the maximum rank attained. For the base case, let $\textup{rk}\ev{ a } = w$. Then, we have $a=b$ and so, $ \mu_\Omega\ev{ a, b } = 1 =  \mu_{ \downset[\Omega]{b} }\ev{ a, b }$. For the inductive hypothesis, we fix $u\in [1,w]$ and assume that the statement holds for all $c\in \downset[\Omega]{a}$ with $u \leq \textup{rk}\ev{ c } \leq w$. For the inductive step, we fix $a\in \downset[\Omega]{b}$ such that $\textup{rk}\ev{ a } = u - 1 \geq 0 $. Then, 
\begin{align*}
&\mu_\Omega\ev{ a,b } = -\sum_{ c\in \Omega: \ a < c \leq b } \mu_\Omega\ev{ c, b } 	\\
& = -\sum_{ c\in \downset[\Omega]{b} : \ a < c \leq b } \mu_\Omega\ev{ c, b } \\
& = -\sum_{ c\in \downset[\Omega]{b} : \ a < c \leq b } \mu_{ \downset[\Omega]{b} }\ev{ c, b } \\
& \hspace{3cm}\text{ by the inductive hypothesis, since $ \textup{rk}\ev{a} < \textup{rk}\ev{c} $ for each $c\in \Omega$ with $a<c\leq b$.} \\ 
& = \mu_{ \downset[\Omega]{b} }\ev{ a, b }
\end{align*}
\end{proof}

\begin{lemma}
\label{lem:mu_up_set}
Let $\ev{ \Omega,\leq }$ be a finite ranked poset. Then, 
$$\mu_\Omega\ev{ a, b } = \mu_{ \upset[\Omega]{a} }\ev{ a, b } $$
for each $a, b \in \Omega$. 
\end{lemma}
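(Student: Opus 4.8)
The plan is to run the argument of \Cref{lem:mu_down_set} with the roles of $a$ and $b$ interchanged, replacing the recursion used there by the ``dual'' one. As in that proof, if $a\not\leq b$ then both sides vanish, so we may assume $a\leq b$, i.e.\ $b\in\upset[\Omega]{a}$. The first point to record is that $\upset[\Omega]{a}$, with the order inherited from $\Omega$, is itself a finite ranked poset: it has unique minimal element $a$; a cover relation $c\lessdot d$ in $\upset[\Omega]{a}$ is the same as a cover relation in $\Omega$ (any $z$ with $c<z<d$ satisfies $z\geq c\geq a$, hence $z\in\upset[\Omega]{a}$); and consequently $c\mapsto \textup{rk}\ev{c}-\textup{rk}\ev{a}$ is a rank function on $\upset[\Omega]{a}$. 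Since $a\leq b$ always admits a saturated chain from $a$ to $b$, the image of $\textup{rk}$ restricted to $\upset[\Omega]{a}$ is moreover a closed interval $[\textup{rk}\ev{a},M]$ of integers, mirroring the situation in the previous lemma (though only finiteness is actually needed below).

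Next I would induct on $\textup{rk}\ev{b}$, for fixed $a$, using the recursion
$$\mu_\Omega\ev{a,b}=-\sum_{c\in\Omega:\ a\leq c<b}\mu_\Omega\ev{a,c}.$$
For the base case $\textup{rk}\ev{b}=\textup{rk}\ev{a}$: together with $a\leq b$ this forces $b=a$ (a strict inequality $a<b$ would yield a saturated chain of positive length, so $\textup{rk}\ev{b}>\textup{rk}\ev{a}$), and then $\mu_\Omega\ev{a,a}=1=\mu_{\upset[\Omega]{a}}\ev{a,a}$. For the inductive step, every $c$ occurring in the sum satisfies $a\leq c$, hence $c\in\upset[\Omega]{a}$ and $\textup{rk}\ev{c}<\textup{rk}\ev{b}$, so the inductive hypothesis gives $\mu_\Omega\ev{a,c}=\mu_{\upset[\Omega]{a}}\ev{a,c}$; furthermore the index set $\{c:a\leq c<b\}$ is literally the same whether computed in $\Omega$ or in $\upset[\Omega]{a}$. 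Substituting into the displayed recursion and recognizing the result as the defining recursion for $\mu_{\upset[\Omega]{a}}\ev{a,b}$ completes the induction.

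I do not expect a genuine obstacle here: the only things requiring attention are the two bookkeeping observations that $\upset[\Omega]{a}$ is ranked with rank function $\textup{rk}\ev{\cdot}-\textup{rk}\ev{a}$ and that passing to the up-set leaves the interval $[a,b]$, and hence the summation range of the recursion, unchanged; both are immediate because $\upset[\Omega]{a}$ is an up-set. One could alternatively deduce the statement in essentially one line by applying \Cref{lem:mu_down_set} inside the ranked poset $\upset[\Omega]{a}$, whose down-set of $b$ is exactly the interval $[a,b]$, together with the general fact that $\mu_\Omega\ev{a,b}$ depends only on $[a,b]$; but since that fact is essentially what is being proved here, the self-contained induction above is cleaner and keeps the presentation parallel to \Cref{lem:mu_down_set}.
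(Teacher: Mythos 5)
Your proof is correct, but it takes a genuinely different route from the paper's. The paper deduces \Cref{lem:mu_up_set} from \Cref{lem:mu_down_set} in one line by passing to the dual poset: it uses $\upset[\Omega]{a} = \downset[\Omega^d]{a}$ together with the transposition identity $\mu_{\Omega^d}\ev{a,b} = \mu_\Omega\ev{b,a}$, so that $\mu_{\upset[\Omega]{a}}\ev{a,b} = \mu_{\downset[\Omega^d]{a}}\ev{b,a} = \mu_{\Omega^d}\ev{b,a} = \mu_\Omega\ev{a,b}$. You instead rerun the induction of \Cref{lem:mu_down_set} directly, inducting upward on $\textup{rk}\ev{b}$ with $a$ fixed and using the ``other'' defining recursion $\mu\ev{a,b} = -\sum_{a\leq c<b}\mu\ev{a,c}$. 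Both arguments work; the paper's is shorter, while yours is more self-contained and arguably more careful: applying \Cref{lem:mu_down_set} to $\Omega^d$ implicitly requires $\Omega^d$ to be a finite \emph{ranked} poset, which is not automatic from $\Omega$ being ranked (in the paper's sense $\Omega^d$ needs a unique minimal element and a compatible corank function), whereas your bookkeeping observation that $\upset[\Omega]{a}$ inherits the rank function $\textup{rk}\ev{\cdot}-\textup{rk}\ev{a}$ because up-sets preserve cover relations sidesteps this entirely. Your closing remark correctly identifies why the ``one-line'' reduction via the interval-dependence of $\mu$ would be circular here, so the direct induction is the right call.
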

\begin{proof}
Consider the dual poset $\Omega^d$. For each $a\in \Omega$, we have $\upset[\Omega]{a} =  \downset[\Omega^d]{a} $, so we have, by \Cref{lem:mu_down_set}, 
$$ \mu_{ \upset[\Omega]{a} } \ev{ a, b } = \mu_{ \downset[\Omega^d]{a} } \ev{ b,a } = \mu_{ \Omega^d } \ev{ b,a } = \mu_{\Omega}\ev{a,b}$$
\end{proof}

\begin{definition}[Partition Lattice]
Let $\mathcal{A}$ be a finite set. We define the partition lattice $\Pi\ev{\mathcal{A}} = \ev{ \pi\ev{ \mathcal{A} }, \leq }$ on the elements of $\mathcal{A}$, as follows:
\begin{enumerate}
\item $\pi\ev{ \mathcal{A} } = \{ \{ A_1, \ldots, A_t \} : A_i\subseteq \mathcal{A} \text{ for each $i$ and }\bigsqcup_{i=1}^{t} A_i = \mathcal{A} \}$. 
\item Given an element $a = \{ A_1, \ldots, A_t \} \in \Pi\ev{ \mathcal{A} }$, the subsets $A_i$ are called \textit{parts} of $a$. 
\item Given $a,b\in \Pi\ev{ \mathcal{A} } $, then $a$ is a \textit{refinement} of $b$ (denoted $a\leq b$) if and only if each part of $b$ is a union of some parts of $a$. 
\item Let $|a| = k $ be the number of parts in a partition $a=\{A_1,\ldots,A_k\}$. 
\end{enumerate}
We define the lattice structure on the partition poset $\Pi\ev{\mathcal{A}}$, which we call the partition lattice hereafter. We define a concurrence relation on $\mathcal{P}\ev{\mathcal{A}}$: For each $C_1,C_2 \subseteq \mathcal{A}$, let $C_1 \varspade C_2 $ if and only if $C_1 \cap C_2 \neq \emptyset$. Let $\overline{\varspade}$ be the equivalence relation, obtained by taking the transitive closure of $\varspade$. 

Given two elements $a = \{ A_1, \ldots, A_t \}, b = \{ B_1,\ldots, B_r\} \in \Pi\ev{ \mathcal{A} }$, then, 
\begin{align*}
& a \wedge b = \{ A \cap B : A\in a, \ B\in b,\  A \varspade B\} \\
& a \vee b = \left\{ \bigcup_{C \in \mathcal{C} } C  \ : \  \mathcal{C} \in (a\cup b) / \overline{\varspade} \right\}
\end{align*}
The operations $\vee$ and $\wedge$ are associative, commutative and idempotent. In particular, $\bigwedge_{i=1}^{r} a_i$ and $\bigvee_{i=1}^{r} a_i$ are defined inductively, for any family $\{a_i\}_{i=1}^{k} \subseteq \Pi\ev{\mathcal{A}}$, which makes $\Pi\ev{\mathcal{A}}$ a complete lattice (\cite[Definition 4.1, p.~17]{burris}).
\end{definition}

\subsection{Heaps of Pieces Framework}
In this section, we will give a short summary of the theory of Heap of Pieces from \cite[Definition 2.1, p.~324]{viennot}. Let $\mathcal{B}$ be a finite set of pieces, and let $\mathcal{R}$ be a concurrence relation on $\mathcal{B}$, i.e., a reflexive and symmetric relation. 

Given a finite graph, we can obtain a set of pieces $\mathcal{B}_G := V\ev{G}$, with concurrence relation $\mathcal{R}_G = \{ \ev{u,v}: \{u,v\}\in E\ev{G} \text{ or } u=v\}$. Conversely, for a given set of pieces $\mathcal{B}$ with a concurrence relation $\mathcal{R}$, we can obtain a simple loopless graph $G_{\mathcal{B},\mathcal{R}}$ with $V\ev{G_{\mathcal{B},\mathcal{R}}} = \mathcal{B}$ and $E\ev{ G_{\mathcal{B},\mathcal{R}} } = \{ \{u,v\}: u\mathcal{R} v \text{ and } u\neq v\}$. Hence, a simple graph $G = \ev{ V\ev{G}, E\ev{G} } $ is identified with a pair $\ev{\mathcal{B},\mathcal{R}}$ of pieces and concurrence relation. 

\begin{definition}
\label{def:heap_viennot}
Let $\mathcal{B}$ be a set (of pieces) with a symmetric and reflexive binary (concurrence) relation $\mathcal{R}$. Then, a \textit{heap} is a triple
$$ H = \ev{ \Omega , \leq, \ell }$$
where $\ev{ \Omega , \leq }$ is a poset, $\ell: \Omega \rightarrow \mathcal{B}$ is a function ($\ell$ labels the elements of $\Omega$ by elements of $\mathcal{B}$) such that for all $x,y\in \Omega$,
\begin{enumerate}
\item[(i)] If $\ell\ev{x} \mathcal{R} \ell\ev{y}$ then $x\leq y$ or $y \leq x$.
\item[(ii)] If $x \lessdot y$, then $\ell\ev{x} \mathcal{R} \ell\ev{y}$.
\end{enumerate} 
\end{definition}

\begin{remark}
\label{rmk:equiv_heap}
Let $H = \ev{ \Omega, \leq }$ be a poset. Define the following associated graphs:
\begin{enumerate}
\item Let $\mathcal{H}\ev{H}$ be the \textup{Hasse graph} of $H$, defined as the pair $\ev{\Omega, \{ \{ x,y\}: x\lessdot y\}}$.
\item Let $\mathcal{C}\ev{H}$ be the \textup{comparability graph} of $H$, namely, $\ev{\Omega, \{ \{ x,y\}: x\leq y\}}$. 
\end{enumerate}
Given a set of pieces $\mathcal{B}$ and a concurrence relation $\mathcal{R}$, consider a labeling function $\ell: \Omega \rightarrow \mathcal{B}$. Then, condition (ii) of \Cref{def:heap_viennot} is equivalent to $\ell$ being a graph homomorphism $\mathcal{H}\ev{H} \rightarrow G_{\mathcal{B},\mathcal{R}}$. Condition (i) is equivalent to $\ell: \Omega \rightarrow \mathcal{B}$ being a graph homomorphism $\mathcal{C}\ev{H}^{c} \rightarrow G_{\mathcal{B},\mathcal{R}}^{c}$, where $G^{c} = \ev{ V\ev{G}, \binom{V\ev{G}}{2} \setminus E\ev{ G }}$ is the \textup{complement graph}. 
\end{remark}
In short, given $\mathcal{B}$ and $\mathcal{R}$, then a poset $H = \ev{\mathcal{B}, \leq, \ell}$ is a heap if and only if $G_{\mathcal{B},\mathcal{R}}$ is ``sandwiched" between the Hasse graph and comparability graph of $H$.

Next, we introduce some notation for heaps of pieces and classify heaps in terms of their labeling functions and the number of maximal pieces: (See \cite{co2}, for more detail.)
\begin{enumerate}
\item $\mathbbm{h}\ev{ \mathcal{B},\mathcal{R}}$ is the set of all \textit{heaps} with labels from $\mathcal{B}$ with concurrence relation $\mathcal{R}$.
\item $\mathbbm{p}\ev{ \mathcal{B},\mathcal{R}}$ denotes the set of \textit{pyramids}, i.e., heaps with a unique maximal piece. 
\item $\mymathbb{H}\ev{ \mathcal{B},\mathcal{R}}$ comprises \textit{full} heaps, i.e., heaps with a bijective labeling function $\ell: \Omega \rightarrow \mathcal{B}$.
\item $\mymathbb{P}\ev{ \mathcal{B},\mathcal{R}}$ denotes full pyramids. 
\item $\mymathbb{P}^{\beta}\ev{ \mathcal{B},\mathcal{R}}$ is the set of full pyramids with maximal piece $\beta$, for a given piece $\beta\in \mathcal{B}$.
\end{enumerate}

For the rest of the text, the labeling function will be omitted whenever we encounter full heaps. In particular, a poset $\ev{\Omega,\leq}$ is a full heap, provided for each $x,y\in \Omega$,
\begin{enumerate}
\item[(i)] If $x \mathcal{R} y$ then $x\leq y$ or $y \leq x$. 
\item[(ii)] If $x \lessdot y$, then $x \mathcal{R} y$.
\end{enumerate}
The first condition is equivalent to $G_{\mathcal{B},\mathcal{R}}$ being a subgraph of $\mathcal{C}\ev{H}$, and the second condition is equivalent to $\mathcal{H}\ev{H}$ being a subgraph of $G_{\mathcal{B},\mathcal{R}}$ (q.v.~\Cref{rmk:equiv_heap}).

\begin{lemma}
\label{lem:push_down}
Given a heap $H = \ev{ \Omega,\leq, \ell}$ and an element $\omega\in \Omega$, then the down-set $\downset[ ]{\omega}$ is a pyramid. Letting $H_1:=\downset[]{\omega}$ and $H_2=H\setminus H_1$, we have $\mathcal{M}\ev{ H_2 } = \mathcal{M} \ev{ H } \setminus \{\omega\}$ and
$$
H_1\circ H_2 = H
$$
\end{lemma}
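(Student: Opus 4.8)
The plan is to unpack the definition of the heap product $\circ$ (which, although not displayed in the excerpt, is the standard Viennot composition: $H_1 \circ H_2$ is the heap on the disjoint union $\Omega_1 \sqcup \Omega_2$ with all relations of $H_1$ and $H_2$ retained, plus new relations $x <' y$ whenever $x \in \Omega_1$, $y \in \Omega_2$ and $\ell(x)\mathcal{R}\ell(y)$, followed by transitive closure) and to verify the three assertions in turn. First I would show $\downset{\omega}$ is a pyramid: it is a down-set of a heap, hence itself a heap with the induced order and label restriction (conditions (i) and (ii) of \Cref{def:heap_viennot} are inherited by any induced subposet that is order-convex, and a down-set is order-convex since $x \le y \le z$ with $x, z \in \downset{\omega}$ forces $y \le \omega$); and it has a unique maximal element, namely $\omega$ itself, because every $x \in \downset{\omega}$ satisfies $x \le \omega$. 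So $\downset{\omega} \in \mathbbm{p}(\mathcal{B},\mathcal{R})$.

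Next I would identify the maximal pieces of $H_2 = H \setminus H_1$. Write $H_1 = \downset{\omega}$. I claim $\mathcal{M}(H_2) = \mathcal{M}(H) \setminus \{\omega\}$, where $\mathcal{M}$ denotes the set of maximal elements. For ``$\supseteq$'': if $m \in \mathcal{M}(H)$ and $m \ne \omega$, then $m \notin \downset{\omega}$ (otherwise $m \le \omega$, and since $m$ is maximal in $H$ this forces $m = \omega$), so $m \in H_2$; and $m$ remains maximal in the induced subposet $H_2$ since it was maximal in the larger poset $H$. For ``$\subseteq$'': suppose $m$ is maximal in $H_2$ but not maximal in $H$, so there is $z \in \Omega$ with $m < z$; pick $z$ maximal in $H$ with this property. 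If $z \in H_2$ this contradicts maximality of $m$ in $H_2$, so $z \in H_1 = \downset{\omega}$, giving $m < z \le \omega$, hence $m \in \downset{\omega} = H_1$, contradicting $m \in H_2$. Thus every maximal element of $H_2$ is maximal in $H$, and it is $\ne \omega$ since $\omega \in H_1$. This establishes the middle claim. (If one tracks multisets of labels rather than sets of elements the same argument applies verbatim; for full heaps, which is the case of interest downstream, labels and elements coincide.)

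Finally I would prove $H_1 \circ H_2 = H$. The underlying sets agree: $\Omega = H_1 \sqcup H_2$ as sets by construction of $H_2$. The order of $H_1 \circ H_2$ is by definition the transitive closure of (order of $H_1$) $\cup$ (order of $H_2$) $\cup$ $\{(x,y) : x \in H_1,\ y \in H_2,\ \ell(x)\mathcal{R}\ell(y)\}$. Each of these three relations is contained in $\le_H$: the first two because $H_1, H_2$ are induced subposets, and the third because $\ell(x)\mathcal{R}\ell(y)$ forces $x$ and $y$ comparable in $H$ by condition (i), and $y \le x$ is impossible since $y \in H_2$ would then lie in $\downset{\omega}$ (as $y \le x \le \omega$), so $x <_H y$. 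Hence $\le_{H_1 \circ H_2} \,\subseteq\, \le_H$ by transitivity of $\le_H$. For the reverse inclusion it suffices to check covers: if $x \lessdot_H y$, then by condition (ii) $\ell(x)\mathcal{R}\ell(y)$; if $x, y$ are both in $H_1$ or both in $H_2$ the relation is present by construction; the case $x \in H_2, y \in H_1$ cannot occur because $y \le \omega$ and $x \le y$ would put $x$ in $H_1$; and the remaining case $x \in H_1, y \in H_2$ is exactly a generating relation of the product. So every covering relation of $H$ lies in $\le_{H_1 \circ H_2}$, and since $H$ is finite its order is the transitive closure of its covers, giving $\le_H \,\subseteq\, \le_{H_1\circ H_2}$. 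The labels match on the nose since $\ell$ is simply restricted and then reassembled. Therefore $H_1 \circ H_2 = H$.

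The main obstacle is purely bookkeeping: one must be careful that $\downset{\omega}$ is order-convex so that conditions (i)--(ii) descend to it cleanly, and that in the product the ``new'' cross-relations $x \in H_1, y \in H_2$ generated by concurrence are precisely the $H$-covers straddling the two pieces — this is where order-convexity of the down-set is used again to rule out a cover pointing ``backwards'' from $H_2$ into $H_1$. Once these convexity observations are in place, all three claims follow by elementary poset manipulations with no real calculation.
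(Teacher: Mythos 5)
Your proof is correct. The paper itself gives no argument here --- it simply defers to the appendix of \cite{co2} --- so there is no in-text proof to compare against; your write-up supplies the standard direct verification (order-convexity of the down-set to show it is a pyramid, the two-sided check on maximal elements, and cover-by-cover agreement of $\leq_H$ with the transitive closure defining $H_1\circ H_2$), and all three steps are sound. The only point worth adding explicitly is that $H_2=H\setminus \downset[]{\omega}$ is itself a heap, which is needed for $H_1\circ H_2$ to be defined; this follows by the same convexity observation you already make, since the complement of a down-set is an up-set and hence order-convex.
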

\begin{proof}
Shown in the appendix of \cite{co2}.
\end{proof}

As a consequence of \Cref{lem:push_down}, we note the following: Given a heap $H$ with $\mathcal{M}\ev{H}=\{\omega_1,\ldots,\omega_k\}$, then we can define $P_1 := \downset[H]{\omega_1}$ and recursively obtain $k$ pyramids $\{P_i\}_{i=1}^{k}$, by defining $P_{i+1}:=\downset{\omega_{i+1}} $ in the remainder heap $H\setminus (P_1 \cup \ldots \cup P_i)$, for each $i=1,\ldots, k-1$. The composition of pyramids obtained in this way gives back the original heap:
$$H = P_1\circ \ldots \circ P_k$$
Here, depending on the order in which we ``push" the pieces $\{\omega_i\}_{i=1}^{k}$ down, we obtain at most $k!$ many decompositions of $H$ into $k$ pyramids (see \cite[Theorem 4.4]{krattenthaler}).

The definition of composition of two heaps is found in \cite[Definition 2.5, p.~4]{krattenthaler} (also see \cite{co2}). Here, we provide a simplified version for full heaps: 
\begin{definition}
\label{def:composition_of_heaps}
Let $H_1 = (\Omega_1, \leq_1 ) , H_2 = (\Omega_2, \leq_2 ) \in \mathbbm{H}\ev{ \mathcal{B}, \mathcal{R}}$ be full heaps, such that $\Omega_1 \cap \Omega_2 = \emptyset$. Then
the composition of $H_1$ and $H_2$, denoted $H_1 \circ H_2$, is the heap $(\Omega_1 \sqcup \Omega_2, \leq_3)$, where the partial order $\leq_3$ is the transitive closure of the relations:
\begin{enumerate}
\item[a)] $v_1 \leq_3 v_2$ if $v_1,v_2 \in \Omega_1$ and $v_1 \leq_1 v_2$,
\item[b)] $v_1 \leq_3 v_2$ if $v_1,v_2 \in \Omega_2$ and $v_1 \leq_2 v_2$,
\item[c)] $v_1 \leq_3 v_2$ if $v_1 \in \Omega_1$, $v_2 \in \Omega_2$ and $v_1 \mathcal{R} v_2$.
\end{enumerate} 
\end{definition}
The composition operation satisfies associativity, which makes $\mymathbb{h}\ev{ \mathcal{B}, \mathcal{R}}$ into a monoid, where the identity element is the empty heap. 

\begin{definition}[Connected set of pieces]
\label{def:connected_pieces}
Let $\mathcal{B}$ be a finite set (of pieces), with concurrence relation $\mathcal{R}$. We say that $\mathcal{B}$ is \textit{connected}, provided the graph $G_{\mathcal{B},\mathcal{R}}$ is connected. We define an indicator function on the subsets of $\mathcal{B}$, by $$
I\ev{A} = \begin{cases}
1 & \text{ if $A$ is connected} \\ 
0 & \text{ otherwise}
\end{cases}
$$
for any $A\subseteq \mathcal{B}$.
\end{definition}
Note that a pair $\ev{ \mathcal{B}, \mathcal{R}}$ can not have a full pyramid, unless it is connected. Next, we show that for a connected set of pieces $\mathcal{B}$, the cardinality of $\mathbb{P}^{\beta}\ev{ \mathcal{B}, \mathcal{R}}$ is independent of the choice of $\beta$. In other words, no matter which piece we choose, we can form the same number of pyramids with the chosen piece being maximal.

\begin{lemma}
\label{lem:mobius_pyramids}
Given a connected set of pieces $\mathcal{B} $ and two fixed concurrent pieces $\beta_1,\beta_2\in \mathcal{B}$, then, 
$$ | \mathbb{P}^{ \beta_2 }\ev{ \mathcal{B}  } | = \sum_{ \substack{ \{ \mathcal{B}_1, \mathcal{B}_2 \} : \mathcal{B}_1 \sqcup \mathcal{B}_2 = \mathcal{B}  \\  \beta_i \in  \mathcal{B}_i, \ I(\mathcal{B}_i) = 1  } }  | \mathbb{P}^{\beta_1}\ev{ \mathcal{B}_1  } | \cdot | \mathbb{P}^{\beta_2}\ev{ \mathcal{B}_2  } |  $$
\end{lemma}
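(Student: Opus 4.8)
The plan is to prove the identity bijectively, by ``pushing down'' the piece $\beta_1$ in a full pyramid with maximal piece $\beta_2$. Precisely, I would construct mutually inverse maps between $\mathbb{P}^{\beta_2}\ev{\mathcal{B}}$ and the set of tuples $\ev{\mathcal{B}_1,\mathcal{B}_2,H_1,H_2}$ with $\mathcal{B}_1\sqcup\mathcal{B}_2=\mathcal{B}$, $\beta_i\in\mathcal{B}_i$, $I\ev{\mathcal{B}_i}=1$, $H_1\in\mathbb{P}^{\beta_1}\ev{\mathcal{B}_1}$ and $H_2\in\mathbb{P}^{\beta_2}\ev{\mathcal{B}_2}$; partitioning the second set according to the pair $\ev{\mathcal{B}_1,\mathcal{B}_2}$ and taking cardinalities then gives the stated formula. (Note $\beta_1\neq\beta_2$, since no admissible partition exists otherwise.)

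For the forward map, let $H\in\mathbb{P}^{\beta_2}\ev{\mathcal{B}}$. As $H$ is full, $\beta_1$ and $\beta_2$ are (unique) elements of $H$, and since $\beta_1\,\mathcal{R}\,\beta_2$, condition (i) of \Cref{def:heap_viennot} makes them comparable; as $\beta_2=\mathbbm{1}_H$ this forces $\beta_1<\beta_2$. I then set $H_1:=\downset[H]{\beta_1}$ and $H_2:=H\setminus H_1$. By \Cref{lem:push_down}, $H_1$ is a pyramid with maximal piece $\beta_1$, $H_2$ is a heap with $\mathcal{M}\ev{H_2}=\mathcal{M}\ev{H}\setminus\{\beta_1\}=\{\beta_2\}$, and $H_1\circ H_2=H$. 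Taking $\mathcal{B}_i$ to be the label set of $H_i$, fullness of $H$ gives $\mathcal{B}_1\sqcup\mathcal{B}_2=\mathcal{B}$ with $\beta_1\in\mathcal{B}_1$; moreover $\beta_2\notin\mathcal{B}_1$ because the element labeled $\beta_2$ equals $\mathbbm{1}_H\not\leq\beta_1$, so $\beta_2\in\mathcal{B}_2$. Since a pair of pieces admits a full pyramid only if it is connected, $I\ev{\mathcal{B}_1}=I\ev{\mathcal{B}_2}=1$, so $\ev{\mathcal{B}_1,\mathcal{B}_2,H_1,H_2}$ is admissible.

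For the inverse map, send an admissible tuple to $H:=H_1\circ H_2$, which is a full heap on $\mathcal{B}$ by \Cref{def:composition_of_heaps}. To see $H\in\mathbb{P}^{\beta_2}\ev{\mathcal{B}}$ I would inspect the composed order $\leq_3$: clauses (a)--(c) never place an element of $\Omega_2$ below an element of $\Omega_1$, so every $v\in\Omega_2\setminus\{\beta_2\}$ lies strictly below some element of $\Omega_2$ (because $\mathcal{M}\ev{H_2}=\{\beta_2\}$), while every $v\in\Omega_1$ satisfies $v\leq_1\beta_1$ and $\beta_1<_3\beta_2$ (clause (c), using $\beta_1\,\mathcal{R}\,\beta_2$), hence $v<_3\beta_2$; thus $\mathcal{M}\ev{H}=\{\beta_2\}$. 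This is the one place where concurrence of $\beta_1,\beta_2$ is essential: without it $\beta_1$ could survive as a second maximal element and $H$ would not be a pyramid.

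Finally I would check the two maps are mutually inverse. Forward-then-inverse is the identity because $H_1\circ H_2=H$ in \Cref{lem:push_down}. For inverse-then-forward, starting from a tuple and forming $H=H_1\circ H_2$, one shows $\downset[H]{\beta_1}=H_1$ and $H\setminus\downset[H]{\beta_1}=H_2$: since $\leq_3$ never flows from $\Omega_2$ back into $\Omega_1$, any $\leq_3$-chain terminating at $\beta_1\in\Omega_1$ stays in $\Omega_1$, hence is a $\leq_1$-chain, so $\downset[H]{\beta_1}=\{v\in\Omega_1:v\leq_1\beta_1\}=\Omega_1$ carrying the order $\leq_1$, i.e. $H_1$, and the complementary heap carries $\leq_2$, i.e. $H_2$. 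Passing to cardinalities of the fibers of $H\mapsto\ev{\mathcal{B}_1,\mathcal{B}_2}$ then yields $|\mathbb{P}^{\beta_2}\ev{\mathcal{B}}|=\sum|\mathbb{P}^{\beta_1}\ev{\mathcal{B}_1}|\cdot|\mathbb{P}^{\beta_2}\ev{\mathcal{B}_2}|$. The step I expect to demand the most care is the order-theoretic bookkeeping around \Cref{def:composition_of_heaps} — verifying that the transitive closure $\leq_3$ creates no new relations within $\Omega_1$ or within $\Omega_2$ and runs only from $\Omega_1$ into $\Omega_2$ — since both the claim that $H_1\circ H_2$ is again a $\beta_2$-pyramid and the recovery of $\ev{H_1,H_2}$ from $H$ rest on exactly that.
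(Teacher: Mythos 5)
Your proposal is correct and follows essentially the same route as the paper: the forward map $P\mapsto\ev{\downset[P]{\beta_1},\,P\setminus\downset[P]{\beta_1}}$ with inverse $\ev{P_1,P_2}\mapsto P_1\circ P_2$, verified via \Cref{lem:push_down} and the structure of \Cref{def:composition_of_heaps}. If anything, you supply slightly more detail than the paper on well-definedness (that $H_1\circ H_2$ really has unique maximal piece $\beta_2$, which is where concurrence of $\beta_1,\beta_2$ enters), but the argument is the same.
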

\begin{proof}
We claim that the mapping:
\begin{align*}
\phi: \mathbb{P}^{ \beta_2 }\ev{ \mathcal{B}   } & \rightarrow \bigsqcup_{ \substack{ \{ \mathcal{B}_1, \mathcal{B}_2 \} : \mathcal{B}_1 \sqcup \mathcal{B}_2 = \mathcal{B}  \\  \beta_i \in  \mathcal{B}_i, \ I(\mathcal{B}_i) = 1  } } \mathbb{P}^{\beta_1}\ev{ \mathcal{B}_1  } \times \mathbb{P}^{\beta_2}\ev{ \mathcal{B}_2  } \\ 
P & \mapsto \ev{ \downset[P]{\beta_1}, P\setminus \downset[P]{\beta_1} }
\end{align*}
is a bijection, with inverse function $\psi: \ev{ P_1, P_2 } \mapsto P_1 \circ P_2$, where $\circ$ denotes composition of heaps. 
\begin{enumerate}
\item In one direction, we have
\begin{align*}
\phi\ev{ \psi\ev{ P_1, P_2 } } = \phi\ev{ P_1 \circ P_2 } = \ev{ \downset[P]{\beta_1}, P\setminus \downset[P]{\beta_1} }
\end{align*}
where $P := P_1\circ P_2$. First, we claim $P_1 = \downset[P]{\beta_1}$. 

($\subseteq$): Let an element $x\in P_1$ be chosen. By hypothesis, we have $\mathcal{M}\ev{P_1} = \{\beta_1\}$. So, $x\leq_{ P_1 } \beta_1$. By the definition of the composition $\circ $ of heaps, we obtain 
$ x \leq_{P_1\circ P_2} \beta_1$, in other words, $x\in \downset[P_1\circ P_2]{\beta_1}$. 

($\supseteq$): Let an element $x\in \downset[P_1\circ P_2]{\beta_1}$ be chosen. In particular, we have $x\leq_{P} \beta_1$. Then, we can find a sequence $\{y_i\}_{i=0}^{k}$ such that $x = y_k \lessdot_{P} \ldots \lessdot_{P} y_0 = \beta_1$. By induction on $i$, we will show that $y_i \in P_1$, for each $i=0,\ldots,k$. The basis step, $i=0$ is clear, since $\beta_1\in P_1$. For the inductive step, let $i\geq 1$ be fixed. Note that $y_i\in P_1$ and $y_{i+1} \lessdot_{P_1 \circ P_2} y_i$ implies $y_{i+1}\in P_1$, by the definition of composition of heaps. 

It follows, easily, that $P\setminus \downset[P]{\beta_1} = P \setminus P_1 = (P_1\sqcup P_2)\setminus P_1 = P_2$.

\item The other direction $\psi\ev{ \phi\ev{ P } } = \phi\ev{  \downset[P]{\beta_1} \circ P\setminus \downset[P]{\beta_1} } = P$, follows from \Cref{lem:push_down}.
\end{enumerate}
\end{proof}

\begin{corollary}
\label{lem:full_pyramids_balanced}
Given a connected set of pieces $\mathcal{B}$, then the number $|\mathbb{P}^{\beta}\ev{\mathcal{B}}|$ of full pyramids with maximal piece $\beta$ is constant over the variable $\beta\in\mathcal{B}$. 
\end{corollary}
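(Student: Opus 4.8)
The plan is to read \Cref{lem:mobius_pyramids} symmetrically and then propagate the resulting equality along paths in the graph $G_{\mathcal{B},\mathcal{R}}$.

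First I would record that for any two distinct concurrent pieces $\beta_1,\beta_2\in\mathcal{B}$ (equivalently, any edge $\{\beta_1,\beta_2\}\in E\ev{G_{\mathcal{B},\mathcal{R}}}$), \Cref{lem:mobius_pyramids} applied with the ordered pair $(\beta_1,\beta_2)$ gives
$$
|\mathbb{P}^{\beta_2}\ev{\mathcal{B}}| \;=\; \sum_{\substack{\{\mathcal{B}_1,\mathcal{B}_2\}:\ \mathcal{B}_1\sqcup\mathcal{B}_2=\mathcal{B}\\ \beta_i\in\mathcal{B}_i,\ I(\mathcal{B}_i)=1}} |\mathbb{P}^{\beta_1}\ev{\mathcal{B}_1}|\cdot|\mathbb{P}^{\beta_2}\ev{\mathcal{B}_2}| ,
$$
while the same lemma applied with the ordered pair $(\beta_2,\beta_1)$ expresses $|\mathbb{P}^{\beta_1}\ev{\mathcal{B}}|$ as the sum, over partitions $\{\mathcal{C}_1,\mathcal{C}_2\}$ with $\beta_2\in\mathcal{C}_1$, $\beta_1\in\mathcal{C}_2$ and both parts connected, of $|\mathbb{P}^{\beta_2}\ev{\mathcal{C}_1}|\cdot|\mathbb{P}^{\beta_1}\ev{\mathcal{C}_2}|$. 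Renaming the part containing $\beta_1$ as $\mathcal{B}_1:=\mathcal{C}_2$ and the part containing $\beta_2$ as $\mathcal{B}_2:=\mathcal{C}_1$, this second index set is precisely the same collection of unordered partitions appearing in the first sum (the constraint ``$\beta_i\in\mathcal{B}_i$'' is symmetric under relabeling the two blocks), and each summand becomes $|\mathbb{P}^{\beta_2}\ev{\mathcal{B}_2}|\cdot|\mathbb{P}^{\beta_1}\ev{\mathcal{B}_1}|$, which equals the corresponding summand of the first sum since integer multiplication is commutative. Hence the two sums coincide, so $|\mathbb{P}^{\beta_1}\ev{\mathcal{B}}| = |\mathbb{P}^{\beta_2}\ev{\mathcal{B}}|$ for every edge $\{\beta_1,\beta_2\}$ of $G_{\mathcal{B},\mathcal{R}}$.

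Next I would invoke connectivity of $\mathcal{B}$ (\Cref{def:connected_pieces}): given arbitrary $\beta,\beta'\in\mathcal{B}$, the connected graph $G_{\mathcal{B},\mathcal{R}}$ contains a path $\beta=\gamma_0,\gamma_1,\dots,\gamma_m=\beta'$, so each consecutive pair $\gamma_i,\gamma_{i+1}$ consists of distinct concurrent pieces. Applying the previous paragraph to each edge of the path yields $|\mathbb{P}^{\gamma_i}\ev{\mathcal{B}}| = |\mathbb{P}^{\gamma_{i+1}}\ev{\mathcal{B}}|$ for $i=0,\dots,m-1$, and chaining these equalities gives $|\mathbb{P}^{\beta}\ev{\mathcal{B}}| = |\mathbb{P}^{\beta'}\ev{\mathcal{B}}|$. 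Since $\beta,\beta'$ were arbitrary, the function $\beta\mapsto|\mathbb{P}^{\beta}\ev{\mathcal{B}}|$ is constant on $\mathcal{B}$, as claimed.

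There is no real obstacle here; the only point requiring care is the bookkeeping in the symmetry step — verifying that the set of admissible unordered partitions is genuinely invariant under interchanging the roles of $\beta_1$ and $\beta_2$, and that $|\mathbb{P}^{\beta_1}\ev{\mathcal{B}_1}|$ does not depend on whether $\mathcal{B}_1$ is regarded as the ``first'' or ``second'' block. One should also note that \Cref{lem:mobius_pyramids} is only ever invoked for $\beta_1\neq\beta_2$, which is automatic since the vertices of the simple loopless graph $G_{\mathcal{B},\mathcal{R}}$ joined by an edge are distinct.
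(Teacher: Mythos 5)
Your proposal is correct and follows essentially the same route as the paper's own proof: reduce to the case of two concurrent pieces via connectivity of $G_{\mathcal{B},\mathcal{R}}$, then observe that the right-hand side of \Cref{lem:mobius_pyramids} is symmetric under interchanging the roles of $\beta_1$ and $\beta_2$. The paper states this symmetry in one line, whereas you spell out the relabeling of blocks and the path-chaining explicitly, but the argument is the same.
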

\begin{proof}
Since $\mathcal{B}$ is connected, it is enough to show the statement for two concurrent pieces $\beta_1$ and $\beta_2$. Assume that $\beta_1$ and $\beta_2$ are concurrent. Since the right-hand side of the equation in \Cref{lem:mobius_pyramids} is symmetric, the statement follows:
\begin{align*}
& | \mathbb{P}^{ \beta_2 }\ev{ \mathcal{B}  } | = \sum_{ \substack{ \{ \mathcal{B}_1, \mathcal{B}_2 \} : \mathcal{B}_1 \sqcup \mathcal{B}_2 = \mathcal{B}  \\  \beta_i \in  \mathcal{B}_i, \ I(\mathcal{B}_i) = 1  } }  | \mathbb{P}^{\beta_1}\ev{ \mathcal{B}_1  } | \cdot | \mathbb{P}^{\beta_2}\ev{ \mathcal{B}_2  } |  = \sum_{ \substack{ \{ \mathcal{B}_2, \mathcal{B}_1 \} : \mathcal{B}_2 \sqcup \mathcal{B}_1 = \mathcal{B}  \\  \beta_i \in  \mathcal{B}_i, \ I(\mathcal{B}_i) = 1  } }  | \mathbb{P}^{\beta_2}\ev{ \mathcal{B}_2 } | \cdot | \mathbb{P}^{\beta_1}\ev{ \mathcal{B}_1  } |  = | \mathbb{P}^{ \beta_1 }\ev{ \mathcal{B}  } | 
\end{align*}
\end{proof}
In particular, given a fixed piece $\beta\in \mathcal{B}$, we can calculate the total number of full pyramids, via the following equation:
$$ | \mathbb{P} \ev{ \mathcal{B} } |  = \sum_{ \gamma\in \mathcal{B}}  | \mathbb{P}^{ \gamma }\ev{ \mathcal{B} } | =  | \mathcal{B}  | \cdot | \mathbb{P}^{ \beta }\ev{ \mathcal{B} } | $$

\subsection{The Bond Lattice}
Let $L$ be a finite lattice. We write $A\ev{L}$ for the set of atoms of $L$ (see \Cref{def:posets_prelim}). Given a set of atoms $D\subseteq A\ev{L}$, we say that $D$ is a \textit{base} of $x:=\bigvee_{y\in D} y$. We shorten the notation and write $\bigvee D$ instead of $\bigvee_{y\in D} y$.

We will provide necessary definitions, to state the NBC Theorem of Rota (q.v.~\Cref{thm:rota_NBC_thm} below). Let $L$ be a finite geometric lattice, with rank function $\textup{rk}$. It is well-known (\cite[p.~86]{birkhoff}) that for any $B\subseteq A\ev{L}$, we have $\textup{rk}\ev{ \bigvee B } \leq |B|$. 
\begin{definition}
Let $\trianglelefteq$ be any fixed linear order on $A\ev{L}$. 
\label{def:NBC_sets}
\begin{enumerate}
\item Given $B\subseteq A\ev{L}$, then $B$ is \textit{independent}, provided $\textup{rk}\ev{\bigvee B} = |B|$. 
\item A minimal (with respect to inclusion) dependent set $C\subseteq A\ev{L}$ is called a \textit{circuit}. 
\item Given a circuit $C \subseteq A\ev{L}$, let $c$ be the maximal element of $C$ with respect to $\trianglelefteq$. Then, $C-c$ is called a \textit{broken-circuit}. 
\item An set $B\subseteq A\ev{L}$ is \textit{NBC}, if $B$ has no subset that is a broken-circuit. Let $\mathcal{N}^{*}\ev{L}$ be the set of NBC subsets of $A\ev{L}$. If $T \in \mathcal{N}^{*}\ev{L} $ is an NBC set, then $T$ is an \textit{NBC base} for $x = \bigvee T $. Let $\mathcal{N}^{x}\ev{L} = \{ T \in \mathcal{N}^{*}\ev{ L }: \bigvee T = x \}$ be the set of NBC bases of an element $x\in L$. We shorten the notation and write $\mathcal{N}\ev{L}$, instead of $\mathcal{N}^{\mymathbb{1}_L}\ev{L}$, for the set of NBC bases of $\mymathbb{1}_L$. 
\end{enumerate}	
\end{definition}
\begin{remark}

\phantom{a}

\begin{enumerate}
\item Broken-circuits are defined differently in \cite{sagan}, where $C-c$ is called a broken-circuit, if $c$ is minimal. Instead, we follow the notation of \cite{benson} in \Cref{def:NBC_sets}. (The dual theory is obtained easily, by switching the orders in what follows, and by using orientations with a unique source, instead of with a unique sink.) 
\item An NBC set is necessarily independent. 
\end{enumerate}
\end{remark}
We are ready to state the NBC Theorem of Rota (\cite[Proposition 1, p.~358]{rota}):
\begin{theorem}
\label{thm:rota_NBC_thm}
Let $L$ be a finite geometric lattice, with a rank function $\textup{rk}$. Let $\trianglelefteq$ be a linear order on $A\ev{L}$. Then, the M\"{o}bius function satisfies:
$$
\mu\ev{ \mymathbb{0}, x } =  \ev{-1}^{\textup{rk}\ev{x}} \cdot \left| \mathcal{N}^{x}\ev{L} \right| 
$$
for any $x\in L$. 
\end{theorem}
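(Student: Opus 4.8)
The plan is to prove Rota's NBC Theorem by verifying that the quantity $\nu(x) := (-1)^{\operatorname{rk}(x)} |\mathcal{N}^{x}(L)|$ satisfies the defining recursion of the Möbius function on the interval $[\mymathbb{0}, x]$, namely $\sum_{y \leq x} \nu(y) = [x = \mymathbb{0}]$, and then invoke uniqueness of the Möbius function. Since a geometric lattice is ranked and, by \Cref{lem:mu_down_set}, $\mu_L(\mymathbb{0}, x) = \mu_{\downset[L]{x}}(\mymathbb{0}, x)$, and moreover $\downset[L]{x}$ is itself a geometric lattice whose atoms are exactly the atoms of $L$ below $x$, it suffices to treat the case $x = \mymathbb{1}_L$: that is, to show $\sum_{x \in L} (-1)^{\operatorname{rk}(x)} |\mathcal{N}^{x}(L)| = [\,\mymathbb{1}_L = \mymathbb{0}_L\,]$, where an NBC base of $x$ is always computed relative to the fixed linear order $\trianglelefteq$ restricted to $A(\downset[L]{x}) = \{p \in A(L) : p \leq x\}$. (One checks the broken-circuit condition is inherited correctly: a circuit inside $\downset[L]{x}$ is a circuit of $L$ contained in the atoms below $x$, so NBC-ness is genuinely a local notion.)

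First I would set up the inductive framework. If $L$ has no atoms it is trivial ($L = \{\mymathbb{0}\}$), matching the right-hand side. Otherwise fix the $\trianglelefteq$-minimal atom $a_0$ of $L$. The key combinatorial observation is a partition of $\mathcal{N}^*(L)$ according to whether a set contains $a_0$: I claim the map $B \mapsto B \cup \{a_0\}$ is a bijection from $\{B \in \mathcal{N}^*(L) : a_0 \notin B\}$ onto $\{B \in \mathcal{N}^*(L) : a_0 \in B\}$ — this is the standard deletion–contraction heart of the argument, using that $a_0$ is minimal so it can never be the "broken" element of any broken-circuit, hence adding it never creates a broken-circuit subset unless $B$ already spans something that, together with $a_0$, becomes dependent; one must check rank behaves well, i.e. $\operatorname{rk}(\bigvee B \vee a_0) = \operatorname{rk}(\bigvee B) + 1$ whenever $B \cup \{a_0\}$ is independent. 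This forces $\nu(x)$ contributions to telescope. I would then sum over fibers: grouping the $x \in L$ and using that $\operatorname{rk}$ increases by exactly one when $a_0$ is added, the signs $(-1)^{\operatorname{rk}}$ cancel in matched pairs, leaving only the contribution from $B = \emptyset$ (spanning $\mymathbb{0}$), which gives $(-1)^0 \cdot 1 = 1$ when $\mymathbb{1}_L = \mymathbb{0}_L$ and is otherwise annihilated because $\emptyset$ is never a base of $\mymathbb{1}_L$ unless $L$ is trivial; more precisely the matched-pair cancellation kills everything and the leftover is exactly $[\,\mymathbb{1}_L = \mymathbb{0}_L\,]$.

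An alternative, and perhaps cleaner, route I would consider is to bypass the bijection and instead use the known Whitney-type expansion of the characteristic polynomial. Rota proved that for a geometric lattice the characteristic polynomial $p_L(t) = \sum_{x \in L} \mu(\mymathbb{0}, x)\, t^{\operatorname{rk}(L) - \operatorname{rk}(x)}$ equals $\sum_{B \text{ independent}} (-1)^{|B|} t^{\operatorname{rk}(L) - |B|}$ (broken-circuit theorem), from which extracting the coefficient of a fixed power of $t$ and restricting to $\downset[L]{x}$ via \Cref{lem:mu_down_set} yields $\mu(\mymathbb{0}, x) = \sum_{B \text{ NBC}, \bigvee B = x} (-1)^{|B|} = (-1)^{\operatorname{rk}(x)} |\mathcal{N}^{x}(L)|$, since every NBC base $B$ of $x$ has $|B| = \operatorname{rk}(x)$. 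If I am allowed to cite the broken-circuit theorem for geometric lattices as known, this is immediate; if not, the self-contained deletion–contraction argument above is the way to go.

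I expect the main obstacle to be the careful bookkeeping in the deletion–contraction bijection: verifying that $B \cup \{a_0\}$ is NBC exactly when $B$ is NBC and $a_0 \notin \overline{\bigvee B}$ (the flat spanned by $B$), and that no broken-circuit can sneak in through the added minimal atom. The subtlety is that a circuit $C$ with $a_0 \in C$ has its $\trianglelefteq$-maximum different from $a_0$, so the broken-circuit $C \setminus \{\max C\}$ contains $a_0$; one must argue such a broken-circuit being a subset of $B \cup \{a_0\}$ would already force a broken-circuit (not containing $a_0$, obtained from a different circuit) inside $B$, contradicting $B \in \mathcal{N}^*(L)$ — this uses the circuit-exchange axiom of the matroid underlying $L$. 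Handling this exchange step rigorously, rather than hand-waving "the minimal atom is harmless," is where the real work lies.
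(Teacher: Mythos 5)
First, a framing remark: the paper does not actually prove \Cref{thm:rota_NBC_thm} --- it is quoted with a citation to Rota --- so any honest proof you supply is necessarily different from "the paper's proof." Your overall plan (show that $\nu\ev{x} := \ev{-1}^{\textup{rk}\ev{x}}\,|\mathcal{N}^{x}\ev{L}|$ satisfies the defining recursion $\sum_{y\leq x}\nu\ev{y} = [x=\mymathbb{0}]$, reduce to the top element by noting that $\downset[L]{x}$ is again geometric with the same circuits, and exhibit a fixed-point-free sign-reversing pairing on NBC sets) is a standard and perfectly viable route to the theorem.

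However, the pairing you propose is false under the paper's conventions, and this is a genuine gap rather than bookkeeping. \Cref{def:NBC_sets} declares a broken circuit to be $C-c$ where $c$ is the $\trianglelefteq$-\emph{maximal} element of $C$ (the paper explicitly notes this is opposite to Sagan's convention), so the pairing must use the $\trianglelefteq$-maximal atom, not the minimal one: if $a$ is the globally $\trianglelefteq$-largest atom, then no broken circuit contains $a$ (any circuit containing $a$ has $a$ as its maximum, so $a$ is exactly the deleted element), hence $B\cup\{a\}$ is NBC if and only if $B$ is, and $B\mapsto B\,\triangle\,\{a\}$ is the desired parity-reversing involution, giving $\sum_{B\in\mathcal{N}^{*}\ev{L}}\ev{-1}^{|B|}=0$ whenever $L$ has an atom. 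Your choice of the minimal atom $a_0$ already fails in the rank-$2$ geometric lattice with three atoms $e_1\triangleleft e_2\triangleleft e_3$ (the flats of $U_{2,3}$): the unique circuit is $\{e_1,e_2,e_3\}$, the unique broken circuit is $\{e_1,e_2\}$, the NBC sets are $\emptyset,\{e_1\},\{e_2\},\{e_3\},\{e_1,e_3\},\{e_2,e_3\}$, and your map sends the NBC set $\{e_2\}$ to the broken circuit $\{e_1,e_2\}$; moreover four NBC sets omit $e_1$ while only two contain it, so no bijection of the kind you describe can exist. The circuit-exchange patch you sketch cannot rescue this, because the statement being patched is untrue; indeed your own observation that $a_0$ "can never be the broken element" is precisely why broken circuits \emph{can} contain $a_0$, so adding $a_0$ can complete one. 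Replacing "minimal" by "maximal" throughout repairs the argument. Your alternative route via the broken-circuit expansion of the characteristic polynomial is not an independent proof: that expansion is equivalent to the statement being proved (and as written you sum over independent sets where you mean NBC sets --- for $U_{2,3}$ the independent-set sum gives $t^{2}-3t+3$ rather than $t^{2}-3t+2$), so invoking it begs the question in essentially the same way the paper's own citation of Rota does.
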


The definition of the bond lattice of a graph $G$ can be found in \cite{tsuchiya}, restated below: 
\begin{definition}
\label{def:bond_lattice_graph}
Let $G$ be a simple graph. 
\begin{enumerate}
\item The \textit{closure} of subset $F \subseteq E\ev{G}$ is $\overline{F} := \{ e\in E\ev{G}: V\ev{e} \subseteq F\}$. A subset $F$ is \textit{closed}, if $\overline{F} = F$. Let $L\ev{G} := \{ F\subseteq E\ev{G} : F \text{ is closed}\}$.  
\item The \textit{bond lattice} of $G$ is $\ev{ L\ev{G}, \wedge, \vee}$, where $F_1 \vee F_2 := \overline{ F_1 \cup F_2 }$ and $F_1 \wedge F_2 := F_1 \cap F_2$. 
\end{enumerate}
\end{definition}

Note that the atoms of the bond lattice $L\ev{G}$ is the set of edges $E\ev{G}$. Given a linear order $\trianglelefteq$ on $E\ev{G}$, it follows by the definition of a circuit, that circuits of $L\ev{G}$ are exactly cycles of length $\geq 3$ of $G$, and a subset $D \subseteq E\ev{G}$ is a base of $\mymathbb{1}_{L\ev{G}}$ if and only if $D$ is a spanning tree. So, NBC bases of $\mymathbb{1}_{L\ev{G}}$ are exactly the spanning trees of $G$ that do not contain any broken-circuits. From now on, we will shortly write $\mathcal{N}\ev{G}$ instead of $\mathcal{N}\ev{L\ev{G}}$, for the set of NBC bases of $\mymathbb{1}_{L\ev{G}}$.

For a pair $\ev{\mathcal{B},\mathcal{R}}$ of pieces and a concurrence relation, consider the partition lattice $\Pi\ev{ \mathcal{B} }$. We will restrict to the sublattice induced by partitions into connected parts, to obtain the bond lattice of $G_{\mathcal{B},\mathcal{R}}$. More formally, we define:
\begin{definition}
By an abuse of notation, let us extend the indicator function $I$ of \Cref{def:connected_pieces} to the partition lattice $\Pi\ev{ \mathcal{B} }$, by 
$$
I\ev{ \{ A_1,\ldots, A_k \}} = \prod_{i=1}^{k} I\ev{ A_i }
$$
\end{definition}
Note that for an element $a = \{A_1,\ldots,A_k\} \in \Pi\ev{ \mathcal{B} }$, we have $I\ev{ a } \neq 0$ if and only if every part of $a$ is connected. Let $\textup{Conn}\ev{\mathcal{B}}$ be the sublattice of $\Pi\ev{ \mathcal{B} }$ induced by partitions into connected parts:
$$
\textup{Conn}\ev{\mathcal{B}} = \{ a = \{ A_1,\ldots,A_k\}  \in \Pi\ev{ \mathcal{B}} : I \ev{a} \neq 0 \}
$$
Then, the mapping $\textup{Conn}\ev{\mathcal{B}} \rightarrow L\ev{G} $ given by $a = \{ A_1,\ldots,A_k\} \mapsto \bigcup_{i=1}^{k} E\ev{ G[A_i] }$ is an isomorphism of lattices, where $G=G_{\mathcal{B},\mathcal{R}}$ and $G[A]$ is the subgraph of $G$ induced by $A$ for each $A\subseteq V\ev{G}$. Therefore, the bond lattice of $G_{\mathcal{B},\mathcal{R}}$ will be identified with the sublattice of the partition lattice induced by partitions into connected parts. The \textit{bond lattice of a set of pieces} $\mathcal{B}$, with concurrence relation $\mathcal{R}$ is defined as the bond lattice $L\ev{ G_{\mathcal{B},\mathcal{R}} }$, denoted shortly $L\ev{ \mathcal{B} }$. 

\subsection{Bijections between NBC Bases of \texorpdfstring{$L(G)$}{1L(G)} and Acyclic Orientations of $G$ with a Unique Sink}
Consider a simple graph $G$. When we combine the results in \cite{zaslavsky}, which connects the chromatic polynomial and the acyclic unique sink orientations, (also see \cite{stanley}) and Whitney's result about the calculation of the chromatic polynomial via NBC sets, we obtain that acyclic orientations of $G$ with a unique sink equal in cardinality to the NBC bases of $\mymathbb{1}_{L\ev{G}}$, in the bond lattice $L\ev{G}$. Furthermore, direct mappings between the two sets are given in \cite{benson, sagan_gebhard, biggs_winkler}. Here, we summarize the construction given in \cite{benson}, where the forward function $\mu$ is explicit and the inverse function $\pi$ is recursive. Afterwards, we give alternative recursive definitions of $\mu$ and $\pi$. 
\begin{theorem}
\label{thm:NBC_bij_explicit}
Let $G$ be a graph. For any vertex $x\in V\ev{G}$, there exist inverse functions:
\begin{center}
\begin{tikzcd}
\mathcal{N}\ev{ G } \arrow[r,shift left=2pt,"\mu"] & \arrow[l,shift left=2pt,"\pi"] \overline{\mathcal{O}}^{x}\ev{ G } 
\end{tikzcd}
\end{center}

\end{theorem}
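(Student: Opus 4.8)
The plan is to establish the bijection between NBC bases of $\mymathbb{1}_{L(G)}$ and acyclic orientations of $G$ with unique sink $x$ by routing through the Heaps of Pieces framework developed above, rather than invoking the explicit construction of \cite{benson} directly. First I would fix the vertex $x$ and, recalling from \Cref{rmk:equiv_heap} that a simple graph $G$ is the same data as a pair $(\mathcal{B}, \mathcal{R})$ with $\mathcal{B} = V(G)$ and $u\,\mathcal{R}\,v \iff \{u,v\}\in E(G)$ or $u=v$, reinterpret $\overline{\mathcal{O}}^{x}(G)$ in the language of heaps. The key observation (Viennot, \cite[p.~325]{viennot}) is that acyclic orientations of $G$ with unique sink $x$ are in natural bijection with full pyramids on $(\mathcal{B},\mathcal{R})$ whose unique maximal piece is $x$, i.e.\ with $\mymathbb{P}^{x}(\mathcal{B},\mathcal{R})$: given an acyclic orientation with unique sink $x$, the induced partial order on $V(G)$ (take the transitive closure of ``$u$ precedes $v$ if $u\to v$'') is a full heap whose Hasse relations are all edges and whose only maximal element is $x$; conversely a full pyramid with top $x$ orients each edge $\{u,v\}$ toward whichever of $u,v$ is larger in the heap order, and condition (i) of \Cref{def:heap_viennot} guarantees every edge is comparable hence gets oriented, while the unique-maximal-piece condition yields exactly one sink, namely $x$. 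This gives the ``topological'' half of the correspondence.

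Next I would build the bijection between $\mathcal{N}(G)$ and $\mymathbb{P}^{x}(\mathcal{B},\mathcal{R})$ recursively, exactly as sketched in the Introduction via the maps $\phi,\psi$. The idea is induction on $|E(G)|$ (or on $|V(G)|$): given an NBC base $T$ of $\mymathbb{1}_{L(G)}$ — i.e.\ a spanning tree of $G$ with no broken circuit, with respect to a fixed linear order $\trianglelefteq$ on $E(G)$ — one peels off the edge of $T$ incident to $x$ that is extremal in an appropriate sense, splitting $G$ along the corresponding vertex into a smaller graph carrying an NBC base and a ``pendant'' piece, recursively obtaining a smaller pyramid and then grafting $x$ on top via composition of heaps (\Cref{def:composition_of_heaps}, \Cref{lem:push_down}); the inverse $\psi$ reverses this, using that the down-set of the maximal piece of a pyramid is again a pyramid (\Cref{lem:push_down}). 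Composing with the Viennot correspondence of the previous paragraph yields $\phi' = \text{(heap-to-orientation)}\circ\phi$ and $\psi' = \psi\circ\text{(orientation-to-heap)}$, and I would then set $\mu := \phi'$ and $\pi := \psi'$; that these agree with the explicit $\mu$ of \cite[Theorem 4.2]{benson} is the content of Equation~(\ref{eqn:equal_bij}) announced in the text, which I would verify by checking both recursions produce the same orientation edge-by-edge.

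The main obstacle I anticipate is the well-definedness of the recursive peeling on the NBC side: one must choose, canonically, which edge of the spanning tree $T$ to remove so that (a) the resulting smaller tree is still NBC in the smaller graph, and (b) the grafting is compatible with the broken-circuit condition in the reverse direction — the ``no broken circuit'' property is exactly what fails under naive edge deletion/contraction, so the choice of edge (typically the $\trianglelefteq$-minimal edge of $T$ at $x$, or the edge determined by the first fundamental cycle) must be made with care, and the inductive hypothesis must be strong enough to track the interaction between $\trianglelefteq$ and the heap order. A secondary, more bookkeeping-level obstacle is matching conventions: the paper adopts \cite{benson}'s convention that a broken circuit is $C - c$ with $c$ the $\trianglelefteq$-\emph{maximal} element (dual to \cite{sagan}), and one must be consistent about whether the distinguished vertex is a sink or a source and whether the heap's maximal piece corresponds to the ``last'' or ``first'' edge — getting these orientations aligned is what makes $\phi'$ literally equal to $\mu$ rather than merely conjugate to it. Once the recursive bijection $\mathcal{N}(G) \leftrightarrow \mymathbb{P}^{x}(\mathcal{B},\mathcal{R})$ is in hand, \Cref{thm:rota_NBC_thm} applied to $L(G)$ (a geometric lattice) together with \Cref{lem:full_pyramids_balanced} gives, as a corollary, the cardinality statement $|\overline{\mathcal{O}}^{x}(G)| = |\mathcal{N}(G)| = |\mu_{L(G)}(\mymathbb{0},\mymathbb{1})|$ independent of $x$, which is the quantitative shadow of the theorem.
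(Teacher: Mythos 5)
Your overall strategy --- factoring the correspondence through full pyramids, with Viennot's dictionary between acyclic unique-sink orientations and $\mymathbb{P}^{x}\ev{\mathcal{B}_G,\mathcal{R}_G}$ on one side and a recursive bijection $\mathcal{N}\ev{G}\leftrightarrow\mymathbb{P}^{x}\ev{\mathcal{B}_G,\mathcal{R}_G}$ on the other --- is exactly the route the paper takes in the material surrounding the theorem: the stated proof itself is only a citation to \cite{benson}, and the self-contained argument is carried by \Cref{lem:NBC_decompose}, \Cref{lem:bij_NBC_pyramid}, and the verification of Equation~(\ref{eqn:equal_bij}). The Viennot half of your argument is fine as written.

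The gap is in the step you yourself flag as the main obstacle: the canonical choice of edge for the recursive peeling. Neither of your proposed candidates works. The correct choice is the $\trianglelefteq$-\emph{maximal} edge $e_S$ of the ambient edge set $E\ev{G[S]}$ --- not an edge of $T$ incident to $x$, and not one selected by a fundamental cycle. The observation that makes the recursion go through is that \emph{every} NBC base of $G[S]$ must contain $e_S$: if $T\not\ni e_S$, then $T+e_S$ contains a circuit $C$ in which $e_S$ is the maximal element, so $C-e_S\subseteq T$ is a broken circuit. Deleting $e_S$ then splits $T$ into two trees $T_1,T_2$ spanning connected induced subgraphs $G[S_1],G[S_2]$ (both possibly large --- there is no ``pendant'' piece), and heredity of the NBC condition handles the forward direction. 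Crucially, maximality of $e_S$ is also what makes the \emph{inverse} well defined: given NBC bases $T_1,T_2$ of $G[S_1],G[S_2]$, any broken circuit $C-f$ inside $e_S\cup T_1\cup T_2$ would have to use $e_S$, forcing $e_S\triangleleft f$ and contradicting maximality. With your choices (say, the $\trianglelefteq$-minimal edge of $T$ at $x$) this last step fails: there may exist $f\triangleright e$ closing a circuit through $e$, so $e\cup T_1\cup T_2$ need not be NBC and the decomposition is not a bijection onto $\mathcal{N}\ev{G[S_1]}\times\mathcal{N}\ev{G[S_2]}$. Once the peeling edge is fixed to be $e_S$, the rest of your plan --- compose the pyramid on $S_1$ whose top is the $S_1$-endpoint of $e_S$ beneath the pyramid on $S_2$ whose top is $x$, invert via \Cref{lem:push_down}, and check $\phi'=\mu$ edge by edge --- matches the paper.
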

\begin{proof}
See \cite[Theorem 4.2, p.~11]{benson}.
\end{proof}

We summarize the construction of $\mu$: Let $G$ be a graph, with a linear order $\trianglelefteq$ on $E\ev{G}$. Given an NBC base $T \in \mathcal{N}\ev{G}$, consider the arborescence where $x$ is the root. 

Given two vertices $i,j \in V\ev{T}$:
\begin{enumerate}
\item The unique path from $i$ to the root $x$ is will be denoted $R_i$. 
\item The \textit{meet} of $i$ and $j$, denoted $\text{meet}\ev{i,j}$ is the unique vertex in $V\ev{R_i}\cap V\ev{R_j}$, that is farthest from $x$. (If $j\in V\ev{R_i}$, then $\text{meet}\ev{i,j}=j$.)
\item Let $e_{ij}$ be the maximal edge of $E\ev{R_i} \setminus E\ev{R_{\text{meet}\ev{i,j}}}$ with respect to $\trianglelefteq$. (Whenever $j\in V\ev{R_i}$, we define $e_{ji}$ to be the null-edge, which is less than every other edge.) 
\item Let $j \prec i$ if and only if $e_{ji} \triangleleft e_{ij}$, i.e., the largest edge from $i$ to the meet is larger than the largest edge from $j$ to the meet. (If $j\in P_i$, then $e_{ji} = \text{null-edge} \triangleleft e_{ij}$, so it automatically follows that $j \prec i$. In particular, $x$ is less than every other vertex, with respect to $\prec$.) Note that $\prec$ is a linear order on the set of vertices. 
\item Define the unique sink orientation $\mu\ev{T}$, by including the edges $\ev{i,j}$ for each pair $i,j$ such that $j \prec i$. In other words, the arrows point to the vertices with smaller assigned edge-value. 
\end{enumerate}

Next, we aim to define a recursive function $\phi'$, and show that $\phi' = \mu$. First, we have a note on the recursive structure of the NBC bases of $\mymathbb{1}_{L\ev{G}}$: 
\begin{lemma}
\label{lem:NBC_decompose}
Let $G$ be a simple graph. Consider a linear order $\trianglelefteq$ on edge-set $E\ev{G}$. Let $e = \{v_1,v_2\}$ be the maximal element of $E\ev{ G }$ with respect to $\trianglelefteq$. Then, the following map is well-defined and bijective:
\begin{align*}
\phi: \mathcal{N}\ev{ G } & \rightarrow \bigsqcup_{\substack{ \{ S_1, S_2 \},\ S_1\sqcup S_2 = V\ev{G} \\ v_i\in S_i,\ G[S_i] \textup{ connected} } }  \mathcal{N}\ev{ G[S_1] } \times \mathcal{N}\ev{ G[S_2] }  \\ 
T & \mapsto \ev{ T_1, T_2 } 
\end{align*}
where $T - e = T_1 \sqcup T_2$ and $S_i = V\ev{T_i}$, for $i=1,2$.
\end{lemma}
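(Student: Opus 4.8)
The plan is to verify the three assertions packaged into the statement of \Cref{lem:NBC_decompose}: (a) for every NBC base $T\in\mathcal{N}\ev{G}$, deleting the maximal edge $e=\{v_1,v_2\}$ splits the spanning tree $T$ into two subtrees $T_1\ni v_1$ and $T_2\ni v_2$ which are NBC bases of the induced subgraphs $G[V(T_i)]$, so that $\phi$ is well-defined; (b) $\phi$ lands in the claimed disjoint union, i.e. $G[V(T_i)]$ is connected and $V(T_1)\sqcup V(T_2)=V(G)$; and (c) $\phi$ is a bijection, by exhibiting its inverse $(T_1,T_2)\mapsto T_1\sqcup T_2\cup\{e\}$.

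First I would observe that since $T$ is a base of $\mymathbb{1}_{L(G)}$, it is a spanning tree of $G$ (as recorded in the text following \Cref{def:bond_lattice_graph}); removing one edge $e$ from a spanning tree yields exactly two components $T_1,T_2$, which are trees spanning the vertex sets $S_i:=V(T_i)$ with $S_1\sqcup S_2=V(G)$ and $v_i\in S_i$. For (b), note $G[S_i]\supseteq T_i$ is connected because it contains a spanning tree $T_i$ of $S_i$. The substantive points are that $T_i$ is \emph{NBC} in $G[S_i]$ (with respect to the restricted linear order $\trianglelefteq$), and — for the inverse to be well-defined — that whenever $S_1\sqcup S_2=V(G)$ with $v_i\in S_i$, $G[S_i]$ connected, and $T_i\in\mathcal{N}\ev{G[S_i]}$, the edge set $T:=T_1\sqcup T_2\sqcup\{e\}$ is an NBC base of $\mymathbb{1}_{L(G)}$. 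For the first: a broken circuit of $G[S_i]$ is a broken circuit of $G$ (a cycle of $G[S_i]$ is a cycle of $G$, and $e$, being maximal in $E(G)$, is never the deleted maximum unless the cycle is all of... no: $e$ joins $S_1$ to $S_2$ so $e$ lies on no cycle of any $G[S_i]$; more to the point, any broken circuit $C-c$ of $G[S_i]$ uses only edges of $G[S_i]\subseteq E(G)$ and its associated cycle $C$ is a cycle of $G$ with the same $\trianglelefteq$-maximum $c$), so $T_i\subseteq T$ containing a broken circuit of $G[S_i]$ would already contradict $T\in\mathcal{N}\ev{G}$ — giving one direction; conversely, if $T=T_1\sqcup T_2\sqcup\{e\}$ contained a broken circuit $C-c$, the cycle $C$ cannot use $e$ (as $e$ is a bridge of $T$, it lies on no cycle contained in $T$), hence $C\subseteq E(G[S_1])$ or $C\subseteq E(G[S_2])$ — here one uses that a cycle avoiding the only $S_1$–$S_2$ edge of $T$ stays within one side — and then $c$, the $\trianglelefteq$-max of $C$, equals the $\trianglelefteq$-max in the restricted order on that side, so $C-c$ is a broken circuit of $G[S_i]$ sitting inside $T_i$, a contradiction. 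That $T$ is a base of $\mymathbb{1}_{L(G)}$, i.e. a spanning tree, is immediate since $T_1,T_2$ are spanning trees of the two sides and $e$ reconnects them.

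**The main obstacle** I anticipate is the clean handling of the broken-circuit condition across the decomposition — specifically, being careful that ``broken circuit of $G[S_i]$'' and ``broken circuit of $G$ contained in $E(G[S_i])$'' coincide, which hinges on the fact that the $\trianglelefteq$-maximum of a cycle $C\subseteq E(G[S_i])$ is the same whether computed in $E(G)$ or in the restriction $E(G[S_i])$ (trivially true) together with the fact that no cycle of $G$ contained in $T_1\sqcup T_2\sqcup\{e\}$ can use the bridge $e$. The disjointness of the indexing union and the observation that the unordered pair $\{S_1,S_2\}$ is recovered canonically (so no double-counting) is routine once one notes $v_1,v_2$ are distinguished by lying in the same part as $e$'s respective endpoints. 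I would present (a)–(b) in a couple of sentences and devote the bulk of the argument to verifying the NBC property in both directions, then conclude that $\phi$ and $(T_1,T_2)\mapsto T_1\sqcup T_2\sqcup\{e\}$ are mutually inverse, whence $\phi$ is a bijection.
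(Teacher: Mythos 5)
Your proof follows essentially the same route as the paper's: split $T$ at the $\trianglelefteq$-maximal edge $e$, check that the two pieces are NBC bases of the induced subgraphs, and exhibit the inverse $(T_1,T_2)\mapsto e\cup T_1\cup T_2$, verifying that it outputs an NBC spanning tree. Your handling of the broken-circuit bookkeeping is, if anything, more careful than the paper's --- in particular the observation that a cycle $C$ of $G$ whose broken circuit $C-c$ lies inside $T_i$ must have $c\in E(G[S_i])$ as well (since $G[S_i]$ is induced and both endpoints of $c$ lie on the path $C-c$), so that ``broken circuit of $G$ inside $T_i$'' and ``broken circuit of $G[S_i]$ inside $T_i$'' coincide.

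There is, however, one genuine gap, and it sits at the very first step: you assert that ``removing one edge $e$ from a spanning tree yields exactly two components,'' which silently presupposes $e\in T$. Nothing about being a spanning tree forces the globally maximal edge to belong to $T$; if $e\notin T$ then $T-e=T$ is connected and $\phi$ is not even defined on $T$. This is precisely the point with which the paper opens its well-definedness argument (``any spanning tree of $G$ that does not contain a broken-circuit must contain $e$''), and it is the fact that makes the whole decomposition possible. The repair is short but uses the NBC hypothesis in an essential way: if $e\notin T$, then $T\cup\{e\}$ contains a unique cycle $C$ through $e$; since $e$ is $\trianglelefteq$-maximal in all of $E(G)$, it is the maximal edge of $C$, so $C-e$ is a broken circuit contained in $T$, contradicting $T\in\mathcal{N}(G)$. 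With that sentence added, your argument is complete and matches the paper's.
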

\begin{proof}
To see that $\phi$ is well-defined, note that any spanning tree of $G$ that does not contain a broken-circuit must contain $e$. So, $T-e$ is a disjoint union of trees, $T_1$ and $T_2$. As $T$ does not have any broken-circuit, it follows that $T_i$ is an NBC set of $G[S_i]$, for $i=1,2$. 

For surjectivity, let $\ev{T_1,T_2}$ be an element of the codomain of $\phi$. Let $T := e\cup T_1 \cup T_2$. Suppose, for a contradiction, that $T$ contains a broken-circuit $C' = C-f$, where $f\in E\ev{G}$ and $C$ is a circuit. Since $T_i$ are both NBC sets, it follows that $e\in E\ev{C}$. By definition of a broken-circuit, we get $e\triangleleft f$, which contradicts the definition of $e$. 

For injectivity, let $\phi\ev{T} = \phi\ev{T'} = \ev{ T_1 ,T_2 }$. In particular, $T-e = T_1 \cup T_2 = T'-e$. This implies that $T = e \cup T_1 \cup T_2 = T'$. 
\end{proof}

\Cref{lem:NBC_decompose} can be turned into an algorithm that recursively calculates the NBC bases of $\mymathbb{1}_{L\ev{G}}$ for a graph $G$. Similar bijections $\mathcal{N}\ev{ G } \leftrightarrow \mathcal{N}\ev{ G \big\vert_{e} }$ and $\mathcal{N}\ev{ G } \leftrightarrow \mathcal{N}\ev{ G - e }$ (where $G \big\vert_{e}$ and $G - e$ denote contraction and deletion, respectively) are found in \cite{dohmen}, where an inductive proof of Whitney's Theorem is given. 

\begin{lemma}
\label{lem:bij_NBC_pyramid}
Let $G$ be a simple graph. Consider a linear order $\trianglelefteq$ on the edge-set $E\ev{G}$. Fix any vertex $x \in V\ev{G}$. Then, we have a bijection between NBC bases of $\mymathbb{1}_{L\ev{G}}$ and the full pyramids with maximal piece $x$: 
$$ 
\mathcal{N}\ev{ G } \rightarrow \mymathbb{P}^{x}\ev{ \mathcal{B}_G,\mathcal{R}_G } 
$$
\end{lemma}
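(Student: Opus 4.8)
The plan is to prove the lemma by induction on $|V\ev{G}|$, producing in the process the recursive bijection $\phi=\phi_{G,x}:\mathcal{N}\ev{G}\to\mymathbb{P}^{x}\ev{\mathcal{B}_G,\mathcal{R}_G}$ used in the sequel. We take $G$ connected, since $\mymathbb{P}^{x}\ev{\mathcal{B}_G,\mathcal{R}_G}$ is empty otherwise, and for $|V\ev{G}|=1$ both sides are singletons. For the inductive step I would fix the $\trianglelefteq$-maximal edge $e=\{v_1,v_2\}$ of $G$; the whole argument lines up a decomposition of $\mathcal{N}\ev{G}$ anchored at $e$ with a matching decomposition of $\mymathbb{P}^{x}\ev{\mathcal{B}_G}$. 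On the NBC side, \Cref{lem:NBC_decompose} gives
$$
\mathcal{N}\ev{G}\;\cong\;\bigsqcup_{\substack{\{S_1,S_2\},\ S_1\sqcup S_2=V\ev{G}\\ v_i\in S_i,\ G[S_i]\text{ connected}}}\mathcal{N}\ev{G[S_1]}\times\mathcal{N}\ev{G[S_2]} .
$$
Since $x$ lies in exactly one of $S_1,S_2$, I split this sum according to whether $x\in S_1$ or $x\in S_2$ and apply the inductive hypothesis to $G[S_1]$ and $G[S_2]$ (both of which have fewer vertices), choosing apex $x$ on the part containing $x$ and apex $v_i$ on the other; each summand then reads $\mymathbb{P}^{x}\ev{\mathcal{B}_{G[S_j]}}\times\mymathbb{P}^{v_i}\ev{\mathcal{B}_{G[S_i]}}$, where $\{i,j\}=\{1,2\}$ and $x\in S_j$. (When $x\in e$ this simply includes the case $x=v_j$, so no separate argument is needed.)

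Next I would build the matching decomposition of $\mymathbb{P}^{x}\ev{\mathcal{B}_G}$. As $v_1\,\mathcal{R}\,v_2$, they are comparable in every $P\in\mymathbb{P}^{x}\ev{\mathcal{B}_G}$, so $\mymathbb{P}^{x}\ev{\mathcal{B}_G}$ splits into the $P$ with $v_1<_P v_2$ and those with $v_2<_P v_1$; let $v_\ell$ denote the lower endpoint in $P$. Since $v_\ell$ is not maximal, $v_\ell\neq x$, so by \Cref{lem:push_down} (with $\omega=v_\ell$) the map $P\mapsto\ev{\downset[P]{v_\ell},\,P\setminus\downset[P]{v_\ell}}$ yields a full pyramid with apex $v_\ell$ and one with apex $x$ — the latter because $\mathcal{M}\ev{P\setminus\downset[P]{v_\ell}}=\mathcal{M}\ev{P}\setminus\{v_\ell\}=\{x\}$ — on complementary piece-sets, each of which induces a connected subgraph of $G$ (the pieces of a full pyramid always do, by chasing covering chains up to the apex). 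Conversely, given such a pair on piece-sets $\mathcal{B}_1\ni v_\ell$ and $\mathcal{B}_2\ni v_{3-\ell},x$, their composition with the apex-$v_\ell$ pyramid placed below recovers $P$ (the computation in the proof of \Cref{lem:mobius_pyramids} showing $\downset[P_1\circ P_2]{\beta_1}=P_1$ uses only that $P_1$ has unique maximal piece $\beta_1$, not $\beta_1\,\mathcal{R}\,\beta_2$), and its unique maximal piece is $x$: since $v_\ell\,\mathcal{R}\,v_{3-\ell}$, composition forces $v_\ell<v_{3-\ell}\leq_P x$, hence every piece is $\leq_P x$. Thus $\mymathbb{P}^{x}\ev{\mathcal{B}_G}$ decomposes into summands $\mymathbb{P}^{x}\ev{\mathcal{B}_{G[S_j]}}\times\mymathbb{P}^{v_i}\ev{\mathcal{B}_{G[S_i]}}$ indexed by the very same partitions $\{S_1,S_2\}$ (here $v_\ell$ comes out to be $v_i$, the endpoint on the side not containing $x$).

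The two index sets coincide and the inductive hypothesis identifies the corresponding factors, so composing the NBC decomposition, the inductive isomorphisms, and the inverse of the heap decomposition yields the bijection $\phi_{G,x}$: explicitly it peels $e$ off $T\in\mathcal{N}\ev{G}$ into trees $T_1,T_2$ on $S_1\ni v_1$, $S_2\ni v_2$, forms $P_j=\phi_{G[S_j],x}\ev{T_j}$ (with $x\in S_j$) and $P_i=\phi_{G[S_i],v_i}\ev{T_i}$ recursively, and returns $P_i\circ P_j$ with the $x$-side pyramid on top. The step I expect to be the main obstacle is precisely the case $x\notin\{v_1,v_2\}$: \Cref{lem:NBC_decompose} is forced to be anchored at the \emph{global} maximum edge $e$, which typically does not meet $x$, so the heap side cannot be cut at $x$ but must be cut at the configuration-dependent lower endpoint $v_\ell$ of $e$ — hence via the general \Cref{lem:push_down} rather than the concurrent-pieces \Cref{lem:mobius_pyramids}, since $v_\ell$ and $x$ need not be concurrent — and one must verify both that this still produces genuine full pyramids over connected piece-sets and that recomposition restores $x$, not $v_\ell$, as the unique maximal piece. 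The remaining check that the two structural bijections are mutually inverse is routine given \Cref{lem:push_down} and the injectivity/surjectivity argument already recorded for \Cref{lem:NBC_decompose}.
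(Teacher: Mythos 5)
Your argument is correct and follows essentially the same route as the paper: induction on the number of vertices, \Cref{lem:NBC_decompose} anchored at the $\trianglelefteq$-maximal edge on the NBC side, and the matching cut of a pyramid at the lower endpoint of that edge via \Cref{lem:push_down} on the heap side, with the recomposed bijection agreeing with the paper's recursive $\phi_{S,x}$. The only cosmetic difference is that the paper writes out both recursive maps $\phi_{S,x}$ and $\psi_{S,x}$ explicitly and checks they are mutually inverse, whereas you package the same content as a bijection between two matching disjoint-union decompositions.
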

\begin{proof}
For each $i\geq 1$, for each connected subset $S\in \binom{V\ev{G}}{i}$ and for each $x\in S$, we will recursively define maps $\phi$ and $\psi$ which are inverses of each other:
\[
\begin{tikzcd}
\mathcal{N}\ev{ G[S] } \arrow[r,shift left=2pt,"\phi_{S,x}"] & \arrow[l,shift left=2pt,"\psi_{S,x}"] \mymathbb{P}^{x}\ev{ S } 
\end{tikzcd}
\]
For each $S\subseteq V\ev{G}$, let $e_S$ be the maximal element of $E\ev{G[S]}$ with respect to $\trianglelefteq$. For $S\in \binom{V\ev{G}}{1}$, let $\phi_{S,x}\ev{x} = \psi_{S,x}\ev{x} = x$ be the singleton. 

Consider any connected $S\in \binom{V\ev{G}}{i}$, for $i\geq 2$ and $x\in S$. Define:
\begin{align*}
\phi_{S,x}: \mathcal{N}\ev{ G[S] } &\rightarrow \mymathbb{P}^{x}\ev{ S } \\ 
T & \mapsto \phi_{S_1,u}\ev{ T_1 } \circ \phi_{S_2,x} \ev{ T_2 }\\
& \text{where $T-e_S=T_1 \sqcup T_2$, $S_i = V\ev{T_i}$ and $x\in V\ev{T_2}$ and $S_1 \cap e_S = \{u\}$}
\end{align*}

Conversely, for each pyramid $P$, the endpoints of $e_S$ are comparable with respect to $\leq_P$, by the definition of a heap, so let $e_S = \{y_S^{P},z_S^{P}\}$ such that $y_S^{P} \leq_P z_S^{P}$. Define:
\begin{align*}
\psi_{S,x}:  \mymathbb{P}^{x}\ev{ S } & \rightarrow \mathcal{N}\ev{ G[S] } \\ 
P & \mapsto e_S \cup \psi_{S_1, y_S^{P}}\ev{ P_1 } \cup \psi_{S_2, x}\ev{ P_2 }\\
&\text{where $P_1 = \downset[P]{y_S^{P}}$, $ P_2 = P\setminus P_1$ and $S_i = V\ev{ P_i }$.}
\end{align*}
Now, we show that $\psi_{S,x} $ and $\phi_{S,x}$ are well-defined and inverses of each other, by induction on $|S|$. The base case $|S| = 1 $  is clear. By the inductive hypothesis, we know that the following are inverses:
$$
\begin{tikzcd} \mathcal{N}\ev{ G[S] }  \arrow[r,shift left=2pt,"\phi_{S,y}"] & \arrow[l,shift left=2pt,"\psi_{S,y}"]  \mymathbb{P}^{y}\ev{ S } \end{tikzcd} 
$$
for any $S\in \binom{V\ev{G}}{j}$, $j\leq i-1$ and $y\in S$. For the inductive step, we first claim $\psi_{S,x} \phi_{S,x} = \textbf{id}$. Let $T$ be an NBC base of $\mymathbb{1}_{G[S]}$. Then, $
\phi_{S,x}\ev{ T } = P_1  \circ P_2  =: P $, where: 
\begin{enumerate}
\item $T-e_S = T_1 \cup T_2$ and $S_i = V\ev{ T_i }$
\item $P_1 = \phi_{S_1,u} \ev{ T_1 }$ and $P_2 = \phi_{S_2,x}\ev{ T_2 }$
\item $x\in S_2$, $e_S = \{ u,v\}$ and $S_1 \cap e = \{u\}$. In particular, we have $v\in S_2$.
\end{enumerate}

By \Cref{lem:NBC_decompose}, we have $T_i \in \mathcal{N}\ev{ G[S_i] } $, which implies, by the inductive hypothesis, that $ P_1 \in \mymathbb{P}^{u}\ev{ S_1 }$, $P_2 \in \mymathbb{P}^{x}\ev{ S_2 } $ and $u \leq_P v \leq_P x$. Hence, $\phi_S\ev{T}$ is well-defined. 

By the definition of $\psi_S$, it follows that 
$$
\psi_{S,x} \phi_{S,x}\ev{ T } = \psi_{S,x}\ev{P} = e_S \cup \psi_{S_1,u}\ev{P_1} \cup \psi_{S_2,x}\ev{ P_2 } = e_S \cup T_1 \cup T_2 = T
$$ 

Next, we claim that $\phi_{S,x} \psi_{S,x} = \textbf{id}$. Let $P$ be a pyramid with maximal piece $x$. Then, $\psi_{S,x}\ev{ P } = e_S \cup T_1 \cup T_2 =: T$, where: 
\begin{enumerate}
\item $e_S = \{y_S^{P},z_S^{P}\}$ with $y_S^{P}\leq_P z_S^{P}$
\item $P_1 = \downset[P]{y_S^{P}}$ and $P_2 = P\setminus P_1$ and $S_i = V\ev{ P_i }$. 
\item $T_1 = \psi_{S_1}\ev{ P_1 }$ and $T_2 = \psi_{S_2}\ev{ P_2 }$. 
\end{enumerate}
By the inductive hypothesis, $\psi_{S_i}\ev{ P_i }$ is an NBC base of $G[S_i]$, for $i=1,2$. Suppose, for a contradiction, that $T$ contains a broken-circuit $C' = C-f$, for a cycle $C$. Then, $e_S\in C'$, and so $e_S\triangleleft f$, contradicting the definition of $e_S$. Hence, $\psi_S\ev{P}$ is well-defined. 

Note that by anti-symmetry condition, $y_S^{P}\leq_P z_S^{P}$ and $y_S^{P} \neq z_S^{P}$ imply $z_S^{P} \not \leq_P y_S^{P}$, and so, $z_S^{P} \in S_2$, which means that $e_S \cap S_1 = \{y_S\}$, so, by the definition of $\phi_S$, we have 
$$
\phi_{S,x} \psi_{S,x}\ev{ P } = \phi_{S,x}\ev{ T } = \phi_{S_1, y_S}\ev{ T_1 } \circ \phi_{S_2,x}\ev{ T_2 } = P_1 \circ P_2 = P
$$
\end{proof}

As noted in \cite[p.~325]{viennot}, there is a natural bijection between pyramids with maximal piece $x$ and acyclic orientations of $G$ with unique sink $x$. For a given full pyramid $P\in \mymathbb{P}^{x}\ev{\mathcal{B}_{G},\mathcal{R}_{G}}$, let $O_P$ be the acyclic orientation of $G$ obtained by including the edge $\ev{y,z}$ for each edge $\{ y,z\}\in E\ev{G}$ with $y\leq_{P} z$. (We draw an arrow from the smaller piece to the larger, with respect to $\leq_P$.) Note that $O_P$ is well-defined, as concurrent pieces are comparable in a heap. Conversely, given an orientation with unique sink $x$, we obtain a pyramid $P_O$ with maximal piece $x$, by taking the transitive closure of the covering relations $ \{ y\lessdot_P z : \ev{ y, z} \in E\ev{G}\}$. (Tails of the arrows are designated to be larger than heads.) We note that the pyramid $P_O$ is well-defined, as $O$ is acyclic. Consider the mappings 
$$
\begin{tikzcd}
\mathcal{N}\ev{ G } \arrow[r,shift left=2pt,"\phi'"] & \arrow[l,shift left=2pt,"\psi' "] \mymathbb{P}^{x}\ev{ G } 
\end{tikzcd}
$$
where $\phi'\ev{T} = O_{\phi\ev{T}}$ and $\psi'\ev{ O } = \psi\ev{ P_O}$.

To prove:
\begin{equation}
\label{eqn:equal_bij}
\mu = \phi'
\end{equation}
we apply induction on $|S|$ and show:
\begin{equation}
\label{eqn:inductive_claim}
\mu\ev{T} [S] = \phi_{S} \ev{ T[S] }
\end{equation}
for each set of vertices $S\subseteq V\ev{G}$. The base case is clear. For the inductive step, let $e_S$ be the maximal edge in $E\ev{T[S]}$, with respect to $\triangleleft$ and $T[S] = T_1 \sqcup T_2 \sqcup e_S$, where $T_i = T[S_i]$, for $i=1,2$. By the inductive hypothesis, we have:
\begin{equation}
\label{eqn:inductive_hyp}
O_i := \mu\ev{T}[S_i] = \phi'_{S_i}\ev{ T_i }
\end{equation}
for $i=1,2$. Let $P:= \phi_{S}\ev{ T[S] }$ be the pyramid output by $\phi$. It suffices to show that the edges $E\ev{S_1,S_2}$ are oriented in the same direction, in the unique sink oriented digraphs $\mu\ev{T}[S]$ and $\phi'_{S} \ev{ T }$. Given any two vertices $x_i \in S_i$ (for $i=1,2$), note that the unique sink $x$ belongs to $S_2$, and so, the maximal edge $e_S$ is on the path $R_1 \subseteq T[S]$, which means $e_{x_2 x_1 } \triangleleft e_{x_1 x_2} = e_S$, in other words, $x_2 \prec x_1$, i.e., $\ev{x_2, x_1} \in E\ev{\mu\ev{ T } [S] }$. On the other hand, we have $x_1 \leq_{P} x_2$, and so, $\ev{x_2,x_1} \in E\ev{ O_P } = \phi'_{S} \ev{ T }$, and the proof is complete. 

We finish this section with two lemmas, to be used in the next section on circuits of digraphs. 
\begin{lemma}
\label{lem:heap_lattice_product_poset_isom}
Given a set of pieces $\mathcal{B} $, consider the partition lattice $\Pi\ev{\mathcal{B}}$ and the sublattice $L\ev{\mathcal{B}}$ of $\Pi\ev{\mathcal{B}}$. Given an element $b = \{B_1,\ldots,B_k\} \in \Pi\ev{ \mathcal{B} }$, then the following map is a lattice isomorphism:
\begin{align*}
\phi :   \Pi\ev{B_1} \times \cdots \times \Pi\ev{B_k} &\rightarrow \downset[ \Pi\ev{ \mathcal{B} } ]{b} \\ 
\ev{ c_1,\ldots,c_k } & \mapsto \bigcup_{i=1}^{t} c_k
\end{align*}
Furthermore, for each element $b = \{B_1,\ldots,B_k\} \in L\ev{ \mathcal{B} }$, the map below is a lattice isomorphism:
\begin{align*}
\phi' : L\ev{B_1} \times \cdots \times L\ev{B_k} &\rightarrow \downset[L\ev{ \mathcal{B} } ]{b} \\ 
\ev{ c_1,\ldots,c_k } & \mapsto \bigcup_{i=1}^{k} c_i
\end{align*}
\end{lemma}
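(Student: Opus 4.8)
The plan is to prove both isomorphisms by first establishing the poset-level bijection and then checking compatibility with meets and joins. I would start with the map $\phi : \Pi(B_1) \times \cdots \times \Pi(B_k) \to \downset[\Pi(\mathcal{B})]{b}$. First I would verify that $\phi$ is well-defined: if $c_i = \{C_{i,1},\ldots,C_{i,t_i}\}$ is a partition of $B_i$ for each $i$, then $\bigcup_{i=1}^k c_i$ is a collection of pairwise disjoint nonempty subsets whose union is $\bigsqcup_i B_i = \mathcal{B}$, hence an element of $\Pi(\mathcal{B})$; and since each block of $\phi(c_1,\ldots,c_k)$ lies inside some $B_j$, each block of $b$ is a union of blocks of $\phi(c_1,\ldots,c_k)$, so $\phi(c_1,\ldots,c_k) \le b$. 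The inverse is the obvious ``restriction'' map: given $a \le b$, each block $A$ of $a$ is contained in exactly one block $B_i$ of $b$ (by the refinement condition), so grouping the blocks of $a$ by which $B_i$ contains them produces a tuple $(c_1,\ldots,c_k)$ with $c_i \in \Pi(B_i)$. One then checks $\phi$ and this restriction map are mutually inverse, which is immediate from the definitions.

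Next I would check order-preservation in both directions. If $(c_1,\ldots,c_k) \le (d_1,\ldots,d_k)$ in the product poset, i.e. $c_i \le d_i$ in $\Pi(B_i)$ for each $i$, then every block of $d_i$ is a union of blocks of $c_i$, so every block of $\bigcup d_i$ is a union of blocks of $\bigcup c_i$, giving $\phi(c_1,\ldots,c_k) \le \phi(d_1,\ldots,d_k)$. Conversely, if $a \le a'$ with both below $b$, then a block $B$ of $a'$ restricted to $B_i$ is a union of blocks of $a$ restricted to $B_i$, so the restriction map is order-preserving too. A poset bijection that is order-preserving in both directions is a poset isomorphism, and since both $\Pi(B_1)\times\cdots\times\Pi(B_k)$ and $\downset[\Pi(\mathcal{B})]{b}$ are finite lattices (the latter as a principal down-set in a finite lattice), a poset isomorphism between them automatically preserves meets and joins. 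This already gives the lattice isomorphism claim without separately verifying the formulas for $\wedge$ and $\vee$.

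For the second map $\phi' : L(B_1) \times \cdots \times L(B_k) \to \downset[L(\mathcal{B})]{b}$, the plan is to realize it as the restriction of $\phi$. Recall (from the discussion following Definition of the bond lattice) that $L(\mathcal{B}) = \mathrm{Conn}(\mathcal{B})$ is the sub-join-semilattice of $\Pi(\mathcal{B})$ consisting of partitions all of whose blocks are connected in $G_{\mathcal{B},\mathcal{R}}$, and note that for $b = \{B_1,\ldots,B_k\} \in L(\mathcal{B})$ each $B_i$ is connected, so $G_{B_i} = G_{\mathcal{B},\mathcal{R}}[B_i]$ and $L(B_i)$ is well-defined. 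Under the isomorphism $\phi$, an element $a \le b$ lies in $L(\mathcal{B})$ (all blocks connected) if and only if, in the corresponding tuple $(c_1,\ldots,c_k)$, every block of every $c_i$ is connected, i.e. $c_i \in L(B_i)$ for all $i$; here one uses that a subset $A \subseteq B_i$ is connected in $G_{\mathcal{B},\mathcal{R}}$ exactly when it is connected in the induced subgraph $G_{\mathcal{B},\mathcal{R}}[B_i]$. Hence $\phi$ restricts to a bijection $\prod_i L(B_i) \to \downset[L(\mathcal{B})]{b}$, which is $\phi'$; since restricting an order isomorphism to an induced subposet on both sides yields an order isomorphism, and $\downset[L(\mathcal{B})]{b}$ is itself a finite lattice (it has top $b$ and bottom the finest partition of $\mathcal{B}$ into connected-components-of-singletons... more precisely the all-singletons partition, which is connected), $\phi'$ is a lattice isomorphism.

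The main obstacle I anticipate is purely bookkeeping rather than conceptual: making the ``restriction'' inverse map precise and verifying that, for the second part, connectivity of blocks is detected correctly at the level of each $B_i$ versus the whole of $\mathcal{B}$ — i.e. that $G_{\mathcal{B},\mathcal{R}}[A] = G_{B_i,\mathcal{R}\restriction B_i}[A]$ for $A \subseteq B_i$, so that the two notions of ``$A$ connected'' agree. Once that identification is in hand, everything else is a routine unwinding of the definitions of refinement and of the partition-lattice operations, and the meet/join compatibility comes for free from the fact that order isomorphisms between finite lattices are lattice isomorphisms.
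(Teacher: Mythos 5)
Your proposal is correct and follows essentially the same route as the paper, which simply asserts that $\phi$ is a lattice isomorphism (citing van Lint for the standard fact that a lower interval of a partition lattice is a product of partition lattices) and that $\phi'$ is its order-respecting restriction to partitions with connected blocks. You have merely written out the details the paper leaves implicit — the restriction inverse, the two-sided order-preservation, the observation that an order isomorphism of lattices preserves meets and joins, and the compatibility of connectivity in $G_{\mathcal{B},\mathcal{R}}[B_i]$ with connectivity in $G_{\mathcal{B},\mathcal{R}}$ — all of which are sound.
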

\begin{proof}
It is clear that $\phi$ is a lattice isomorphism (C.f. \cite[p.~344]{vanlint} for a short argument.). By the definition of the bond lattice, the restriction $\phi'$ respects the partial order relations, $\leq_{L\ev{B_1} \times \cdots \times L\ev{B_k}}$ and $\leq_{ \downset[L\ev{\mathcal{B}}]{b} }$. 
\end{proof}

The following lemma can be derived by combining \Cref{thm:rota_NBC_thm} and \Cref{thm:NBC_bij_explicit}. For reference, we provide an alternative proof, using Weisner's theorem, in the language of Heaps of Pieces.
\begin{lemma}
\label{lem:balanced_pieces_mobius}
For a set of pieces $\mathcal{B} = \{\beta_1,\ldots,\beta_k\}$, the M\"{o}bius function of the bond lattice of $\mathcal{B}$ satisfies: 
$$
\mu_{L\ev{ \mathcal{B} }}\ev{ \mymathbb{0}, \mymathbb{1} } = \ev{ -1 }^{1-k} |\mymathbb{P}^{\beta}\ev{\mathcal{B},\mathcal{R}}|
$$
for each $\beta \in \mathcal{B}$.
\end{lemma}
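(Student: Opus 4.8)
The plan is to apply Weisner's theorem to the bond lattice $L = L\ev{\mathcal{B}}$ using a cleverly chosen atom, and then to set up a recursion that matches the recursion already established for $|\mymathbb{P}^{\beta}\ev{\mathcal{B},\mathcal{R}}|$ in \Cref{lem:mobius_pyramids}. Recall Weisner's theorem: for a finite lattice $L$ of rank $\geq 1$ and any atom $p \in A\ev{L}$ with $p \neq \mymathbb{1}$, one has $\sum_{\substack{x \in L \\ x \vee p = \mymathbb{1}}} \mu_L\ev{\mymathbb{0},x} = 0$, equivalently $\mu_L\ev{\mymathbb{0},\mymathbb{1}} = -\sum_{\substack{x < \mymathbb{1} \\ x \vee p = \mymathbb{1}}} \mu_L\ev{\mymathbb{0},x}$. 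First I would pick two concurrent pieces $\beta_1, \beta_2 \in \mathcal{B}$ (possible since $\mathcal{B}$ is connected and $k \geq 2$; the case $k = 1$ is the trivial base case where both sides equal $1$) and take $p$ to be the atom of $L\ev{\mathcal{B}}$ corresponding to the single edge $\{\beta_1,\beta_2\}$, i.e. the partition whose only non-singleton block is $\{\beta_1,\beta_2\}$.

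Next I would analyze which $x \in L\ev{\mathcal{B}}$ with $x < \mymathbb{1}$ satisfy $x \vee p = \mymathbb{1}$. Since $\mymathbb{1}$ is the single-block partition $\{\mathcal{B}\}$ and joining with $p$ merges exactly the blocks containing $\beta_1$ and $\beta_2$, the condition $x \vee p = \mymathbb{1}$ together with $x \neq \mymathbb{1}$ forces $x$ to be a two-block partition $\{\mathcal{B}_1, \mathcal{B}_2\}$ with $\beta_1 \in \mathcal{B}_1$, $\beta_2 \in \mathcal{B}_2$, and — because $x$ lies in the bond lattice — $I\ev{\mathcal{B}_1} = I\ev{\mathcal{B}_2} = 1$. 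Then I would invoke \Cref{lem:heap_lattice_product_poset_isom}: the down-set $\downset[L\ev{\mathcal{B}}]{x}$ is isomorphic to $L\ev{\mathcal{B}_1} \times L\ev{\mathcal{B}_2}$, and since the M\"obius function is multiplicative over products and agrees with the M\"obius function of the down-set by \Cref{lem:mu_down_set}, we get $\mu_{L\ev{\mathcal{B}}}\ev{\mymathbb{0},x} = \mu_{L\ev{\mathcal{B}_1}}\ev{\mymathbb{0},\mymathbb{1}} \cdot \mu_{L\ev{\mathcal{B}_2}}\ev{\mymathbb{0},\mymathbb{1}}$. Substituting the induction hypothesis $\mu_{L\ev{\mathcal{B}_i}}\ev{\mymathbb{0},\mymathbb{1}} = \ev{-1}^{1 - |\mathcal{B}_i|}|\mymathbb{P}^{\beta_i}\ev{\mathcal{B}_i}|$ and using $|\mathcal{B}_1| + |\mathcal{B}_2| = k$, the sign bookkeeping gives $(-1)^{2 - k} = (-1)^{-k} = (-1)^k$ as an overall factor, and the extra $-1$ from Weisner's theorem turns this into $(-1)^{k+1} = (-1)^{1-k}$, so that
\begin{align*}
\mu_{L\ev{\mathcal{B}}}\ev{\mymathbb{0},\mymathbb{1}} &= -\sum_{\substack{\{\mathcal{B}_1,\mathcal{B}_2\}: \mathcal{B}_1 \sqcup \mathcal{B}_2 = \mathcal{B} \\ \beta_i \in \mathcal{B}_i,\ I\ev{\mathcal{B}_i} = 1}} \ev{-1}^{2-k} |\mymathbb{P}^{\beta_1}\ev{\mathcal{B}_1}| \cdot |\mymathbb{P}^{\beta_2}\ev{\mathcal{B}_2}| \\
&= \ev{-1}^{1-k} \sum_{\substack{\{\mathcal{B}_1,\mathcal{B}_2\}: \mathcal{B}_1 \sqcup \mathcal{B}_2 = \mathcal{B} \\ \beta_i \in \mathcal{B}_i,\ I\ev{\mathcal{B}_i} = 1}} |\mymathbb{P}^{\beta_1}\ev{\mathcal{B}_1}| \cdot |\mymathbb{P}^{\beta_2}\ev{\mathcal{B}_2}|.
\end{align*}

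Finally I would recognize the sum on the right as exactly $|\mymathbb{P}^{\beta_2}\ev{\mathcal{B}}|$ by \Cref{lem:mobius_pyramids}, which closes the induction and, combined with \Cref{lem:full_pyramids_balanced} (so that the choice of $\beta$ does not matter), yields the claim for every $\beta \in \mathcal{B}$. The main obstacle I anticipate is not any deep idea but the careful verification of two points: that the $x < \mymathbb{1}$ with $x \vee p = \mymathbb{1}$ in the \emph{bond} lattice (rather than the full partition lattice) are precisely the connected two-block partitions separating $\beta_1$ from $\beta_2$ — one must check there are no "extra" such $x$ coming from the join operation merging more than two blocks — and the sign arithmetic, where an off-by-one in the exponent would break the match with \Cref{lem:mobius_pyramids}. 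One should also double check that Weisner's theorem applies, i.e. that $p \neq \mymathbb{1}$, which holds precisely because $k \geq 2$.
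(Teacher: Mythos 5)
Your proposal is correct and follows essentially the same route as the paper: Weisner's theorem applied at the atom merging two concurrent pieces, identification of the relevant elements as the connected two-block partitions separating $\beta_1$ from $\beta_2$, the product decomposition of their down-sets, induction on $k$, and \Cref{lem:mobius_pyramids} to close the recursion. The only cosmetic difference is that you use the ``$x \vee p = \mymathbb{1}$'' form of Weisner's theorem and verify those $x$ are copoints, whereas the paper sums directly over copoints $h$ with $p \not\leq h$; these coincide here, as you check.
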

\begin{proof}
Fix two concurrent pieces $\beta_1,\beta_2$ and consider the point $p:= \{ \beta_1 \beta_2, \beta_3, \ldots, \beta_t \}$ of $L\ev{ \mathcal{B} }$. By Weisner’s Theorem (\cite[Theorem 25.3 on p.~339, Equation (25.6) on p.~340]{vanlint}), we have 
$$ \mu\ev{ \mymathbb{0}, \mathbbm{1} } = - \sum_{ \substack{h: h \lessdot \mathbbm{1} \\ p\not \leq h } } \mu\ev{ \mymathbb{0}, h }$$
\begin{enumerate}
\item The copoints $h$ of $L\ev{\mathcal{B}}$ are in the form $h = \{ A_1, A_2\}$, where $A_i$ is connected (for $i=1,2$) and $\mathcal{B} = A_1 \sqcup A_2$. 
\item Given a copoint $ h $, then the point $p$ does not refine $h$ if and only if the pieces $\beta_1$ and $\beta_2 $ belong to different parts of $h$.
\item Given a copoint $h = \{A_1,A_2\}$, then by \Cref{lem:heap_lattice_product_poset_isom}, we have a poset isomorphism $$L\ev{A_1} \times L\ev{ A_2 } \rightarrow \downset{h}$$ By \cite[p.~344]{vanlint}, we have the property 
$$
\mu_{L\ev{A_1}}\ev{ a_1, \mymathbb{1}_{A_1} } \cdot \mu_{ L\ev{A_2} } \ev{ a_2, \mymathbb{1}_{A_2} } = \mu_{ \downset{h} }\ev{ a_1 \cup a_2, h }
$$
for each $\ev{ a_1,a_2 } \in L\ev{A_1} \times L\ev{A_2}$. 
\end{enumerate}

We are ready to apply induction on the number of pieces $|\mathcal{B}| = k $, to show the statement, $\mu\ev{ \mymathbb{0}, \mathbbm{1} } = \ev{ -1 }^{ 1 - k } | \mathbb{P}^{ \beta }\ev{ \mathcal{B} ,\mathcal{R}} | $, for any $\beta \in \mathcal{B}$. For the basis step, let $k = 1$. Then, $L\ev{ \mathcal{B} } = \{ \beta_1 \}$ is a singleton, and so, $ \mymathbb{0} = \mathbbm{1} $ and $ \mu\ev{ \mymathbb{0}, \mathbbm{1} } = 1 = \ev{ -1 }^{ 1 - 1 }| \mathbb{P}^{ \beta_1 }\ev{ \mathcal{B} ,\mathcal{R}} | $. For the inductive step, we calculate,
\begin{align*}
& \mu\ev{ \mymathbb{0}, \mathbbm{1}  } = - \sum_{ \substack{h: h \lessdot \mathbbm{1} \\ p\not \leq h } } \mu_{L\ev{ \mathcal{B} }}\ev{ \mymathbb{0}, h } \\ 
& = - \sum_{ \substack{h: h \lessdot \mathbbm{1} \\ p\not \leq h } } \mu_{ \downset{h} }\ev{ \mymathbb{0}, h } && \text{ by \Cref{lem:mu_down_set}}\\ 
& = -  \sum_{ \substack{ \{ A_1, A_2 \} : A_1 \sqcup A_2 = \mathcal{B}  \\  \beta_i \in  A_i, \ I(A_i) = 1  } }  \mu_{ L\ev{A_1} }\ev{ \mymathbb{0}_{ A_1 }, \mymathbb{1}_{ A_1 } }  \cdot \mu_{ L\ev{A_2} }\ev{ \mymathbb{0}_{ A_2 }, \mymathbb{1}_{ A_2 } }\\ 
& = -  \sum_{ \substack{ \{ A_1, A_2 \} : A_1 \sqcup A_2 = \mathcal{B}  \\  \beta_i \in  A_i, \ I(A_i) = 1  } }  \ev{ -1 }^{ 1 - | \mymathbb{0}_{ A_1 } | } | \mathbb{P}^{ \beta_1 }\ev{ A_1 ,\mathcal{R}} | \cdot \ev{ -1 }^{ 1 - | \mymathbb{0}_{ A_2 } | } | \mathbb{P}^{ \beta_2 }\ev{ A_2 ,\mathcal{R}} |  \\ 
&\hspace{2cm} \text{ by the inductive hypothesis } \\ 
& = \ev{ -1 }^{ 1 - k } \sum_{ \substack{ \{ A_1, A_2 \} : \beta_i \in  A_i \\ I(A_i) = 1, \  A_1 \sqcup A_2 = \mathcal{B} } }  | \mathbb{P}^{ \beta_1 }\ev{ A_1 ,\mathcal{R}} |  \cdot  | \mathbb{P}^{ \beta_2 }\ev{ A_2 ,\mathcal{R}} | &&\text{ since $| \mymathbb{0}_{A_1} | + |\mymathbb{0}_{A_2} | = k$ }  \\
& = \ev{ -1 }^{ 1 - k } | \mathbb{P}^{ \beta_2 }\ev{ \mathcal{B} ,\mathcal{R}} | && \text{ by \Cref{lem:mobius_pyramids}}
\end{align*}
Hence, we have $\mu\ev{ \mymathbb{0}, \mathbbm{1}  }  = \ev{ -1 }^{ 1 - k } | \mathbb{P}^{ \beta_2 }\ev{ \mathcal{B} ,\mathcal{R}} |$. The statement follows, by \Cref{lem:full_pyramids_balanced}. 
\end{proof}

\section{Partition Lattice of the Edge-set of an Eulerian Digraph} 
\label{sec:partition_lattice_eulerian_digraph}
\subsection{Preliminaries on Digraphs and Closed Trails}
We outline some definitions for Eulerian digraphs. For more detail and examples, we refer the reader to \cite{co2}. Let $X = \ev{ V\ev{X}, E\ev{X}, \varphi }$ be a multi-graph of rank $k=2$. A \textit{walk} of $X$ is an alternating sequence \[\mathbf{w}=\ev{v_0,e_1,v_1,e_2,\ldots, v_{d-1}, e_{d}, v_d}\] of vertices $\ev{ v_i }_{i=0}^{d}$ and edges $\ev{ e_i }_{i=1}^{d}$, such that consecutive pairs of elements are incident. 

The \textit{length} of a walk $\mathbf{w}$, denoted $|\mathbf{w}|$, is the number of edges in $\mathbf{w}$. The vertices $v_0,v_d$ are said to be the \textit{initial vertex} and the \textit{terminal vertex} of $\mathbf{w}$, respectively. A \textit{closed walk} at $v_0$ is a walk where $v_0 = v_d$.  

A walk $\mathbf{w}$ is called a \textit{trail}, if its edges are distinct. A closed trail $\mathbf{w}$ of $X$ is an \textit{Eulerian trail}, if it uses all the edges of $X$. We denote by $\mathcal{T}\ev{X}$ and $\mathcal{W}\ev{X}$, the sets of closed trails and Eulerian (closed) trails of $X$, respectively. Given an edge $e\in E\ev{X}$ with $\varphi\ev{e} = \{ z_1,z_2 \}$, the set of \textit{Eulerian (closed) trails ending at $e$} is defined as,
\begin{align*}
\mathcal{W}^{e}\ev{X} &= \{ \mathbf{w} = \ev{ v_0, e_1, v_1,\ldots, v_{ d-2 }, e_{ d-1 }, z_1, e, z_2 } : \mathbf{w} \in \mathcal{W}\ev{X} \} \\
&\sqcup \{ \mathbf{w} = \ev{ v_0, e_1, v_1,\ldots, v_{ d-2 }, e_{ d-1 }, z_2, e, z_1 } : \mathbf{w} \in \mathcal{W}\ev{X} \}
\end{align*}

A walk of a digraph is defined similarly: A walk $\mathbf{w}=\ev{v_0,e_1,v_1,e_2,\ldots, v_{d-1}, e_{d}, v_d}$ must satisfy $\psi\ev{ e_i } = \ev{ v_{i-1}, v_{i} }$, for each $i = 1,\ldots, d$. 

Let $X$ be a multi-graph of rank $k=2$. We define the set of \textit{circuits} of $X$ as the quotient $\mathcal{Z}\ev{X} := \mathcal{T}\ev{X} /\textup{rk}$ of closed trails under the equivalence relation $\rho$, which is defined by cyclic permutation of the edges. More formally, $\rho$ is the transitive closure of the relations,
$$ \mathbf{w}_1 = \ev{ v_0, e_1, v_1, e_2,\ldots, e_{d-1}, v_{d-1}, e_{d}, v_0 } \ \ \rho \ \  \mathbf{w}_2 = \ev{ v_{d-1}, e_{d}, v_0, e_1, v_1,e_2,\ldots, e_{d-1} ,v_{d-1} } $$

The set of \textit{Eulerian circuits} is defined as $\mathfrak{C}\ev{ X } := \mathcal{W}\ev{ X } / \rho$, the quotient of the set of Eulerian trails of $X$, up to the equivalence relation of cyclic permutation of edges. 

Whenever a circuit $[w]_\rho \in \mathcal{Z}\ev{ X } $ has no repeated vertices, we call it a \textit{cycle}. Let $\mathcal{B}\ev{ X }$ denote the set of cycles of a multi-graph $X$. 

The circuits and cycles of a digraph is defined similarly. We use the same notation, to refer to the closed trails, Eulerian trails, Eulerian circuits and cycles of a digraph. From the definition, it follows that an Eulerian digraph (a digraph with at least one Eulerian trail) is necessarily connected. Given an edge $e\in E\ev{D}$ such that $\psi\ev{e} = \ev{z_1,z_2}$, then the Eulerian trails of $D$ ending at $e$ is defined as:
\begin{align*}
\mathcal{W}^{e}\ev{D} &= \{ \mathbf{w} = \ev{ v_0, e_1, v_1,\ldots, v_{ d-2 }, e_{ d-1 }, z_1, e, z_2 } : \mathbf{w} \in \mathcal{W}\ev{D} \}
\end{align*}

To define heaps of pieces of a digraph $D$, we take $\mathcal{B}\ev{D}$, the set of cycles, as the set of pieces. Given two cycles $\beta_1,\beta_2\in \mathcal{B}\ev{D}$, we say that $\beta_1$ and $\beta_2$ are \textit{concurrent}, denoted $\beta_1 \mathcal{R} \beta_2$, if they share a vertex: $V\ev{\beta_1} \cap V\ev{\beta_2} \neq \emptyset$. Equivalently, $G_{\mathcal{B}\ev{D},\mathcal{R}}$ is the ``intersection graph" of $\mathcal{B}\ev{D}$ (c.f. \cite[p.~1]{mckee}). We lay out an example for demonstration: 
\begin{example}
\label{ex:digraph_TD}
Consider the digraph $D = \ev{ V\ev{D} = [4], E\ev{D} = \{ e_1,e_2,f_1,f_2,g_1,g_2,h_1,h_2\}, \psi: E\ev{D}\rightarrow V\ev{D} }$, where $\psi$ is the mapping: 
$$
e_1 \mapsto \ev{2,1}, \ e_2 \mapsto \ev{1,2}, \ f_1 \mapsto \ev{1,3}, \ f_2 \mapsto \ev{3,1}, \ g_1 \mapsto \ev{3,2}, \ g_2 \mapsto \ev{2,3}, 
h_1 \mapsto \ev{4,3}, \ h_2 \mapsto \ev{3,4}
$$
A drawing of $D$ is found in \Cref{fig:digraph_TD}. 
\end{example}

\begin{figure}[h!] 
\centering
\includegraphics[width=2in]{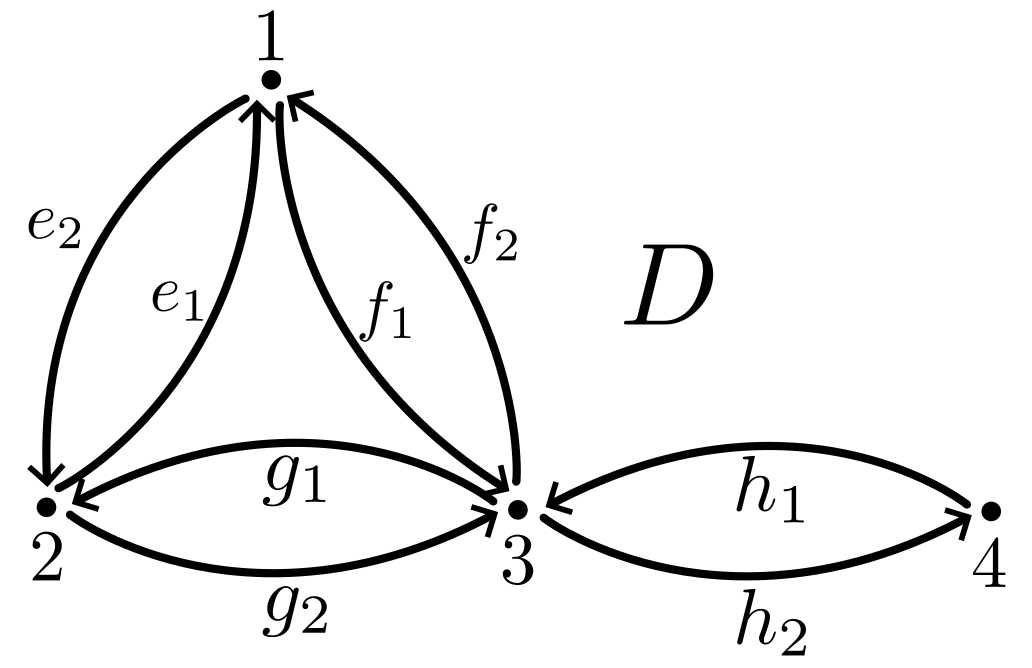}
\caption{The digraph $D$ defined in \Cref{ex:digraph_TD}.}
\label{fig:digraph_TD}
\end{figure}
The digraph $D$ in \Cref{ex:digraph_TD} has $6$ pieces: $\mathcal{B}\ev{D} = \{ \beta_i\}_{i=1}^{6}$, defined and drawn in \Cref{tbl:digraph_TD}. Among the pieces, $\beta_1$ and $\beta_4$ are non-concurrent, and every other pair are concurrent. 
\begin{table} 
\centering
$
\begin{array}{|c|c|c|}
\hline
\raisebox{0.2cm}{\includegraphics[width=4.5cm]{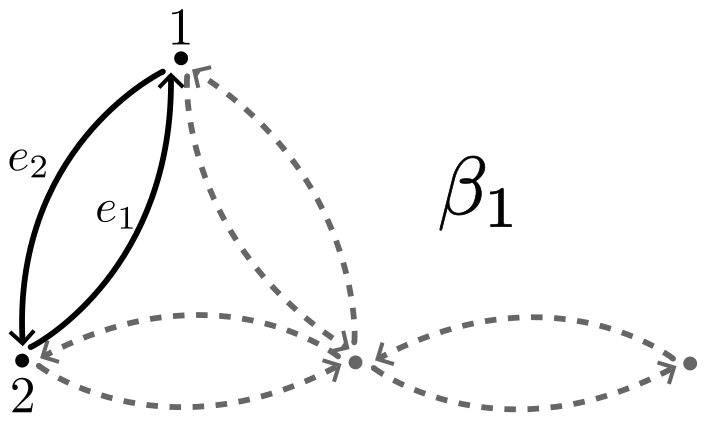}} \phantom{\rule{0.4pt}{2.5cm}} & \raisebox{0.2cm}{\includegraphics[width=4.5cm]{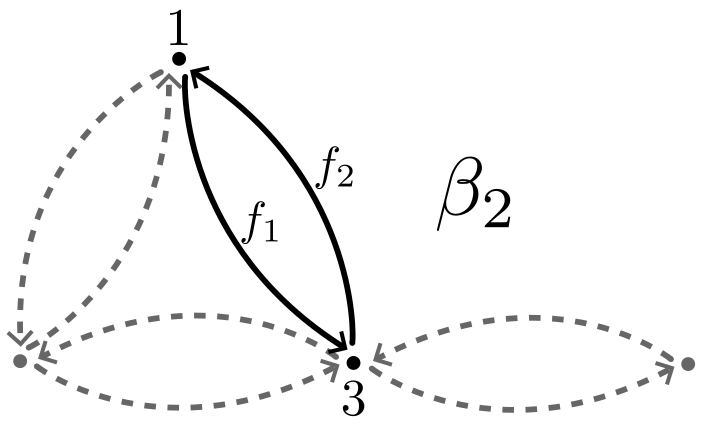}} &
\raisebox{0.2cm}{\includegraphics[width=4.5cm]{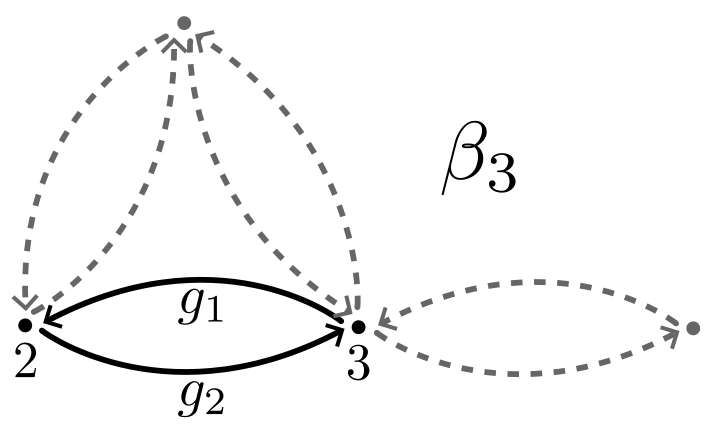}} \\\hline
\includegraphics[width=4.5cm]{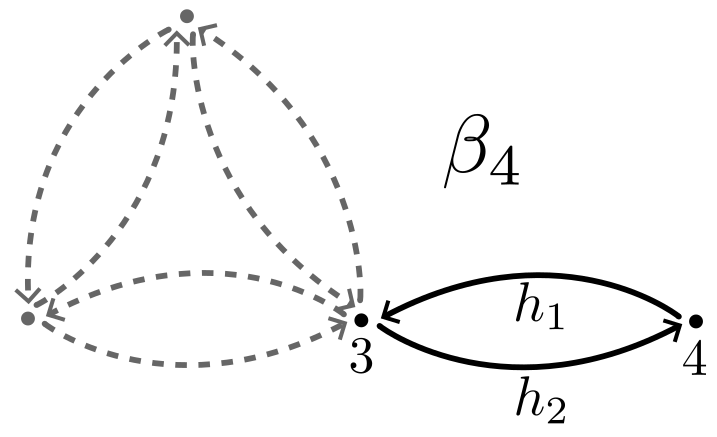} \phantom{\rule{0.4pt}{2.5cm}} & 
\includegraphics[width=4.5cm]{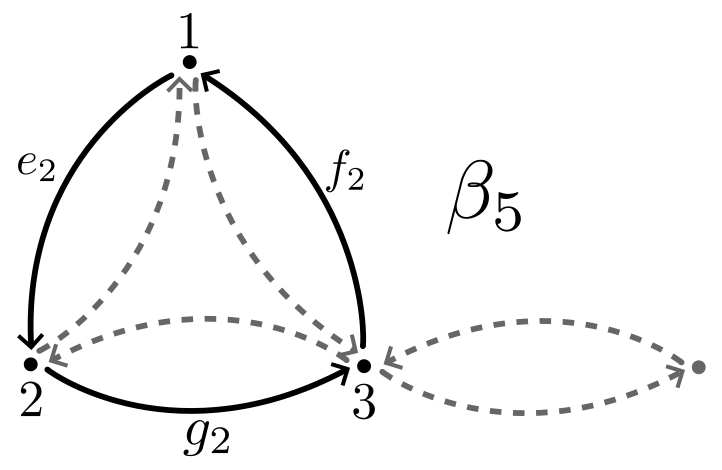} &
\includegraphics[width=4.5cm]{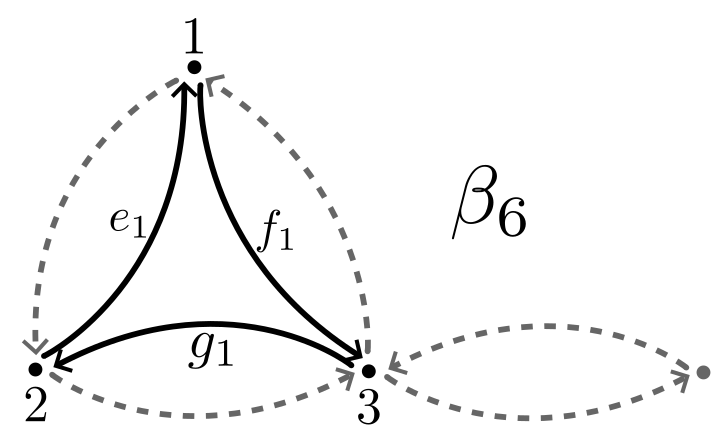} \\\hline
\end{array}
$
\caption{The $6$ pieces of the digraph $D$ defined in \Cref{ex:digraph_TD}.}
\label{tbl:digraph_TD}
\end{table}

From each closed trail $\mathbf{w} \in \mathcal{T}\ev{D}$, we construct a unique sequence of cycles, as follows: First, we express $\mathbf{w}$ uniquely as a concatenation $\alpha \mathbf{w}_1 \gamma$ where,
\begin{itemize}
\item[i)] $\alpha$ is a path starting at $u$ (possibly of length zero); and 
\item[ii)] $\mathbf{w}_1$ is the simple closed trail, the internal vertices of which are distinct and disjoint from $\alpha$. 
\end{itemize}
We say that $\mathbf{w}_1$ is the \textsl{first simple closed subtrail} of $\mathbf{w}$ and $\alpha \gamma$ is the \textsl{remainder}. Note that the remainder $\alpha\gamma$ is again a closed trail at $u$, or a single vertex, i.e., a trail of length $0$. If $\alpha \gamma$ has length $>0$, we may again extract the first simple closed subtrail of $\alpha\gamma$, say $\mathbf{w}_2$. In this way, we iteratively decompose $\mathbf{w}$ into simple closed trails, $\{ \mathbf{w}_i \}_{i=1}^{m}$. Then, the \textit{cycle sequence} of $\mathbf{w}$ is the sequence of cycles $(\beta_1, \ldots, \beta_m )$, where $\beta_i := [\mathbf{w}_i]_\rho$, for each $i=1,\ldots,m$. (We refer the reader to \cite{co2} for a formal definition.)

This process yields a function $ \textup{cs}: \mathcal{T}\ev{D} \rightarrow \mathcal{B}\ev{D}^{*} $, where $\mathcal{B}\ev{D}^{*}$ is the Kleene star closure of $\mathcal{B}\ev{D}$, which maps each closed trail to its cycle sequence:
$$
\mathbf{w}\mapsto \begin{cases} \emptyset & \text{ if } |\mathbf{w}| = 0 \\ (\beta_1, \beta_2,\ldots, \beta_{m}) & \text{ otherwise }
\end{cases}
$$

The counterpart of the operation of decomposing a closed trail into a sequence of cycles, is the operation of insertion, that obtains a new trail from two trails, provided they satisfy certain conditions: 
\begin{definition}
\label{def:insertion_and_concatenation}
\phantom{a}

\begin{enumerate}
\item Given a closed trail $\mathbf{w} = \ev{ v_0, e_1,v_1,\ldots, v_{m-1},e_{m-1},v_m} $ and two indices $i \leq j $, then we define the subtrail,
$$
v_i \mathbf{w} v_j = \ev{ v_i, e_i, v_{i+1},\ldots, v_{j-1},e_{j-1},v_j}
$$
\item Given two not-necessarily-closed trails $\mathbf{w}_1,\mathbf{w}_2$ with $E\ev{ \mathbf{w}_1 } \cap E\ev{ \mathbf{w}_2 } = \emptyset$, assume that the terminal vertex of $\mathbf{w}_1$ is the same as the initial vertex of $\mathbf{w}_2$:
$$ \mathbf{w}_1 = \ev{ v_0, e_1,v_1,\ldots, v_{m-1},e_{m-1},v_m} \text{ and } \mathbf{w}_2 = \ev{ v_0', e_0', v_{1}', \ldots, v_{r-1}',e_{r-1}',v_r'}$$
where, $v_m = v_0'$ and $0\leq m, r$. Then, the \textit{concatenation} of $\mathbf{w}_1$ and $\mathbf{w}_2$, denoted $\mathbf{w}_1 \mathbf{w}_2$, is the trail:
$$ \ev{ v_0, e_1,v_1,\ldots, v_{m-1},e_{m-1},v_m, e_0', v_{1}', \ldots, v_{r-1}',e_{r-1}',v_r'} $$
\item Assume that $\mathbf{w}_1, \mathbf{w}_2$ are closed trails at $v_0$ and $v_0'$, such that $E\ev{ \mathbf{w}_1 } \cap E\ev{ \mathbf{w}_2 } = \emptyset$, $v_0' = v_j \in V\ev{\mathbf{w}_1}$ for some $j\geq 0$ and $V\ev{ \mathbf{w}_1 } \cap \{ v_i \}_{i=0}^{j-1} = \emptyset$. Then, the \textit{insertion} of $\mathbf{w}_1$ into $\mathbf{w}_2$ is defined as the concatenation:
$$
\mathbf{w}_1\cdot \mathbf{w}_2 := (v_0 \mathbf{w}_2 v_j) \ev{ v_0' \mathbf{w}_1 v_r' }  \ev{ v_j \mathbf{w}_2 v_d }
$$ 
In particular, when $j=0$, i.e., both $\mathbf{w}_1$ and $\mathbf{w}_2$ are closed trails at the same vertex $v_0 = v_0'$, then $\mathbf{w}_1 \cdot \mathbf{w}_2 = \mathbf{w}_1 \mathbf{w}_2$ is just the concatenation of $\mathbf{w}_1$ and $\mathbf{w}_2$.
\end{enumerate}
\end{definition}

\subsection{Cancellation Property of the Partition Lattice of an Eulerian Digraph}
In this section, we show one way of proving the cancellation property (\ref{eqn:cancellation}). 
\begin{definition}
Let $D$ be an Eulerian digraph. We define the set of partitions of $D$ into $k\geq 1$ circuits:
$$
\mathfrak{C}_k\ev{D}=\{ \{ [\mathbf{w}_1]_\rho,\ldots, [\mathbf{w}_k]_\rho \} : \mathbf{w}_i \in \mathcal{T}\ev{D} \text{ is a closed trail, for each $i$, and } [ \mathbf{w}_1 \cdots \mathbf{w}_k ]_\rho \in  \mathfrak{C}\ev{D}\}
$$
where $\cdot$ denotes the operation of insertion, as defined in \Cref{def:insertion_and_concatenation} above. Let $f_k\ev{D} := |\mathfrak{C}_k\ev{D}|$, for $k\geq 1$.
\end{definition}
In particular, note that $f_1\ev{D}=|\mathfrak{C}_1\ev{D}|= |\mathfrak{C}\ev{D}|$. Also, there are natural bijections which yield, $|\mathfrak{C}\ev{D}| = |\mathcal{W}^{e}\ev{D}|$, for each edge $e\in E\ev{D}$, and $|\mathfrak{C}\ev{D}| \cdot \textup{deg}_D^{+}\ev{u} = |\mathcal{W}^{u}\ev{D}|$ for each vertex $u\in V\ev{D}$.

\begin{definition}
\label{def:number_cycles}
Consider the partition lattice $\Pi\ev{E\ev{D}}$ on the edge-set of $D$. 
\begin{enumerate}
\item Define the function, 
$$ 
F\ev{ \{ A_1,\ldots,A_k \} } = \prod_{i=1}^{k} - |\mathfrak{C}\ev{ A_i }| = \ev{-1}^{k} \prod_{i=1}^{k} |\mathfrak{C}\ev{ A_i }|
$$
where $|\mathfrak{C}\ev{ A }|$ is the number of Eulerian circuits of each subset $A \subseteq E\ev{D}$. 
\item Let $ G\ev{ b } = \sum_{ a \leq b }  F\ev{a} $ for each $b \in \Pi\ev{E\ev{D}}$. 
\item Let $\mymathbb{1}_D = \mymathbb{1}_{ \Pi\ev{E\ev{ D } } } $ be the unique maximal element, i.e., $\mymathbb{1}_D = \{ E\ev{ D } \} $.
\end{enumerate}
\end{definition}
Observe that:
$$f_k\ev{D} = \sum_{ \substack{ a = \{ A_1,\ldots, A_k \} \leq \mathbbm{1}_D \\ |a| = k } } \prod_{i=1}^{k} f_1\ev{A_i} = \sum_{ \substack{ a \leq \mathbbm{1}_D \\ |a| = k } } \ev{-1}^{k} F\ev{a}  = \ev{-1}^{k} \sum_{ \substack{ a \leq \mathbbm{1}_D \\ |a| = k } }  F\ev{a} $$

\begin{definition}
\label{def:induced_sublattice} 
Given an Eulerian digraph $D$, let $T\ev{D} = \ev{ \mathcal{T}, \leq\big\vert_\mathcal{T} }$ be the sub-poset of the partition lattice $\Pi\ev{ E\ev{D} }$, induced by $\mathcal{T} := \{ b\in \Pi\ev{ E\ev{D} } : F\ev{b} \neq 0 \}$, i.e., the set of partitions of $E\ev{D}$ into Eulerian parts. (Note that $F\ev{b} \neq 0$ if and only if each part of $b = \{A_1,\ldots,A_m\}$ is the edgeset of an Eulerian digraph.)
\end{definition}

By definition, the unique maximal element of $\Pi\ev{E\ev{D}}$ belongs to $T\ev{D}$ and is denoted by $\mathbbm{1}_D$. On the other hand, minimal elements of $T\ev{D}$ are exactly the partitions of $E\ev{D}$ into cycles. Unless $D$ has a unique partition into directed cycles, $T\ev{D}$ does not have a unique minimal element. Let $Q\ev{D} := \min\ev{T\ev{D}}$ be the set of partitions of $D$ into cycles. 

We will show that $T\ev{D}$ is closed under $\vee_{\Pi\ev{E\ev{D}}}$:
\begin{lemma}
\label{lem:unique_min_lattice}
The sub-poset $\ev{T\ev{D}, \vee_{\Pi\ev{E\ev{D}}}}$ is a join-semilattice. Furthermore, $T\ev{D}$ has a unique minimal element if and only if $\ev{T\ev{D}, \vee_{\Pi\ev{E\ev{D}}}, \wedge_{\Pi\ev{E\ev{D}}}}$ is a lattice. 
\end{lemma}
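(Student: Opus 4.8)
The plan is to treat the two assertions in turn. For the first, I will show that $T\ev{D}$ is closed under the ambient join $\vee := \vee_{\Pi\ev{E\ev{D}}}$; since $\mymathbb{1}_D \in T\ev{D}$ is an upper bound of every element, this makes $\ev{T\ev{D},\vee}$ a join-semilattice in which the join agrees with $\vee$. So fix $a=\{A_1,\ldots\}$ and $b=\{B_1,\ldots\}$ in $T\ev{D}$ and let $M$ be a block of $a\vee b$. Because $a,b\le a\vee b$, the block $M$ is at once a disjoint union $A_{i_1}\sqcup\cdots\sqcup A_{i_p}$ of blocks of $a$ and a disjoint union $B_{j_1}\sqcup\cdots\sqcup B_{j_q}$ of blocks of $b$, and by the description of $\vee$ in the partition lattice the bipartite graph on $\{A_{i_1},\ldots,A_{i_p}\}\cup\{B_{j_1},\ldots,B_{j_q}\}$ with an edge whenever two of these blocks share an edge of $D$ is connected. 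From this I read off two facts: $M$ is balanced, since $\deg^{+}_{M}\ev{v}=\sum_k \deg^{+}_{A_{i_k}}\ev{v}=\sum_k \deg^{-}_{A_{i_k}}\ev{v}=\deg^{-}_{M}\ev{v}$ at every vertex $v$ (the $A_{i_k}$ are pairwise edge-disjoint and individually balanced); and $M$ is (weakly) connected, since each $A_{i_k}$ and each $B_{j_l}$ is a connected sub-digraph and two blocks sharing an edge share its endpoints, so connectedness propagates along the connected bipartite graph. A balanced connected digraph is Eulerian, so every block of $a\vee b$ is Eulerian, i.e. $a\vee b\in T\ev{D}$; closure under binary joins gives the join-semilattice.

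For the second assertion, the implication from ``$\ev{T\ev{D},\vee,\wedge}$ is a lattice'' to ``$T\ev{D}$ has a unique minimal element'' is immediate, since a finite lattice has a least element. For the converse, suppose $T\ev{D}$ has a unique minimal element; in the finite poset $T\ev{D}$ this minimal element is in fact a least element $q=\mymathbb{0}_{T\ev{D}}$, necessarily one of the partitions of $E\ev{D}$ into cycles, and $q\le c$ for every $c\in T\ev{D}$. Now invoke the soft fact that a finite join-semilattice with a least element is automatically a lattice, the meet of $a,b$ being $\bigvee\{c\in T\ev{D}: c\le a,\ c\le b\}$ (a join over the nonempty set of common lower bounds). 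It remains to check that this meet is $\wedge:=\wedge_{\Pi\ev{E\ev{D}}}$, for which it suffices to show $a\wedge b\in T\ev{D}$ for all $a,b\in T\ev{D}$. Since $q\le a$ and $q\le b$ in $\Pi\ev{E\ev{D}}$, we have $q\le a\wedge b$, so every block of $a\wedge b$ is a union of cycles of $q$ and hence balanced; consequently $a\wedge b$ fails to lie in $T\ev{D}$ only if some block $M=A\cap B$ (with $A\in a$, $B\in b$) is disconnected. The crux is to exclude this possibility.

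To exclude it I would argue by contrapositive: a disconnected block $M$, together with the fact that $A$ and $B$ are connected unions of $q$-cycles both containing $M$, forces the intersection graph $G_q$ of the cycles in $q$ to contain a cycle, and then a rerouting argument produces a second partition of $E\ev{D}$ into cycles, contradicting the uniqueness of $q$. Concretely, a chordless (or inclusion-minimal) cycle of $G_q$ is a cyclic list of $q$-cycles $C_1,\ldots,C_r$ with consecutive cycles meeting at distinct vertices $v_1,\ldots,v_r$; splitting each $C_t$ at $v_{t-1},v_t$ into two internally disjoint arcs and reassembling the arcs in two complementary ways yields two genuine cycles replacing $C_1\cup\cdots\cup C_r$, hence a cycle partition different from $q$. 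The main obstacle is making this rerouting rigorous: the naive swap of transitions at a single shared vertex merges two cycles into a figure-eight closed walk rather than producing cycles, so one must argue carefully (via a minimal configuration) that the arcs can be chosen pairwise disjoint except at the designated meeting points. The cleanest packaging is probably the standalone statement that a connected Eulerian digraph whose cycle-intersection graph is not a forest has at least two partitions into cycles; granting this, all the remaining steps --- semilattice closure, the reduction to ``disconnected block'', and the general lattice fact --- are routine.
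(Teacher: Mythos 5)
Your treatment of the join half is correct and is essentially the paper's argument: closure of $T\ev{D}$ under $\vee_{\Pi\ev{E\ev{D}}}$ holds because each block of $a\vee b$ is an edge-disjoint union of Eulerian blocks of $a$ that is connected by the structure of the partition join, hence Eulerian. Likewise, the ``lattice $\Rightarrow$ unique minimum'' direction and your reduction of the converse to the single claim that every nonempty block $A\cap B$ of $a\wedge_{\Pi\ev{E\ev{D}}}b$ is connected match the paper (balancedness is automatic, since $q\leq a\wedge b$ forces each block to be an edge-disjoint union of cycles of the unique minimal element $q$).

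The gap is exactly where you place it, and the repair you propose does not work as packaged. The standalone statement you want --- a connected Eulerian digraph whose cycle-intersection graph is not a forest has at least two partitions into cycles --- is false: take $k\geq 3$ directed triangles sharing a single common vertex and otherwise disjoint. Every cycle is forced to be one of the petals, so the partition into cycles is unique, yet the intersection graph is $K_k$. Hence ``unique minimal element'' does not imply that $G_q$ is a forest, and you cannot get connectivity of $A\cap B$ from a Helly-type property of subtrees. What is actually needed is the weaker implication: if some block $A\cap B$ is disconnected, then $D$ admits two cycle partitions. A disconnected block yields cycles $C,C'$ of $q$ lying in different components of $A\cap B$, together with a $C$--$C'$ path in $G_q[A]$ and one in $G_q[B]$, each of length at least $2$ (adjacent cycles share a vertex and would lie in the same component); from this configuration one must extract either two $q$-cycles meeting in two distinct vertices or a cycle of $G_q$ whose consecutive meeting vertices are pairwise distinct, and only then does your arc-swapping produce a second partition. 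That case analysis (coincident meeting vertices, chords, and the degenerate flower-type configurations above) is the real content of the step and is absent from your proposal. For what it is worth, the paper's own proof disposes of this point with the parenthetical that $A\cap B$ is ``connected by the definition of $\varspade$,'' which only asserts non-emptiness; so you have correctly isolated a genuinely delicate step, but you have not closed it, and the auxiliary lemma you propose to close it with is false.
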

\begin{proof} 
By an easy induction one can see that:
\begin{equation}
\label{eqn:eulerian_iff}
\text{If a digraph $N$ is connected and $E\ev{N}$ is a disjoint union of Eulerian digraphs, then $N$ is Eulerian.}
\end{equation}

Let $a = \{A_1,\ldots,A_t\},b=\{B_1,\ldots,B_m\}\in T\ev{D}$ be given. We claim that $c:= a \vee_{\Pi\ev{E\ev{D}}} b = \left\{ \bigcup_{C \in \mathcal{C} } C  \ : \  \mathcal{C} \in (a\cup b) / \overline{\varspade} \right\} \in T\ev{D}$. Given any part $C_1 \cup \ldots \cup C_r$ of $c$, where $C_i \in a\cup b$ and $C_1 \varspade \ldots \varspade C_r$, note that $C_1 \cup \ldots \cup C_r$ is connected, as we have $C_i \cap C_{i+1} \neq \emptyset$, for each $i$, by the definition of $\varspade$. On the other hand, since $a$ refines $c$, each part of $c$ is a disjoint union of some parts of $a$. As every part of $a$ is an Eulerian digraph, it follows that each part of $c$ satisfies (\ref{eqn:eulerian_iff}), and so, $F\ev{c} \neq 0$ and $c\in T\ev{D}$. 

Next, assume that $c\in T\ev{D}$ is the unique minimal element. In particular, each part of $c$ is a cycle. Given any two elements $a,b \in T\ev{D} $, we claim that $a \wedge_{\Pi\ev{E\ev{D}}} b = \{ A \cap B : A\in a, \ B\in b,\  A \varspade B\} \in T\ev{D}$. Since $c$ refines $a$, any part $A\in a$ is a disjoint union of cycles of $c$. Similarly for any part of $b$. Therefore, any part $A\cap B$ of $a\wedge b$ is both connected (by the definition of $\varspade$) and is a disjoint union of cycles of $c$, and so $F\ev{A\cap B} \neq 0$, by (\ref{eqn:eulerian_iff}). Conversely, if $\ev{T\ev{D}, \vee_{\Pi\ev{E\ev{D}}}, \wedge_{\Pi\ev{E\ev{D}}}}$ is a lattice, then it does not have more than one minimal element, by the definition of a lattice.
\end{proof}

For demonstration, we draw the Hasse diagram of $T\ev{D}$ in \Cref{fig:induced_join_semilattice}, where the values $F\ev{b}$ are noted next to each element $b\in T\ev{D}$.
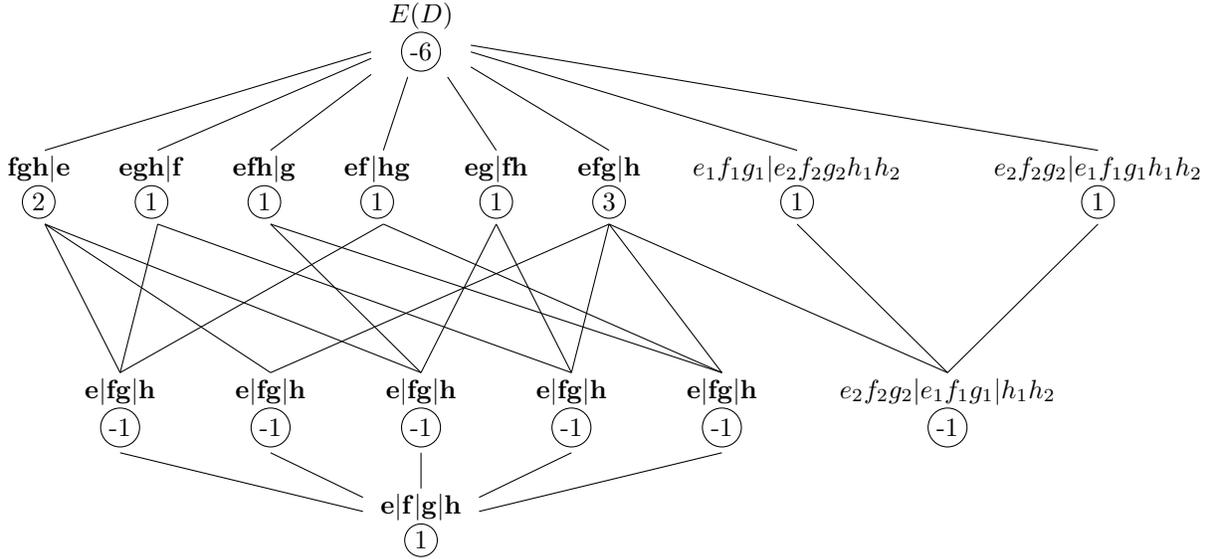
\begin{figure}[ht!]
\centering
\begin{tikzpicture}[node distance=1.5cm, every node/.style={inner sep=1pt, outer sep=1pt}] 
\node (n0) at (-2,1) {\Large{ $\substack{E\ev{D} \\ \circled{-6} } $ } };
\node (n1) at (-7,-1) {\Large{$\substack{\mathbf{f} \mathbf{g} \mathbf{h} |  \mathbf{e}  \\ \circled{2} } $ } };
\node (n2) at (-5.5,-1) {\Large{$ \substack{ \mathbf{e} \mathbf{g} \mathbf{h} |  \mathbf{f} \\ \circled{1} } $ } };
\node (n3) at (-4,-1) {\Large{$ \substack{  \mathbf{e} \mathbf{f} \mathbf{h} | \mathbf{g} \\ \circled{1} }  $ } };
\node (n4) at (-2.5,-1) {\Large{$ \substack{ \mathbf{e} \mathbf{f} | \mathbf{h} \mathbf{g} \\ \circled{1} } $ } };
\node (n5) at (-1,-1)  {\Large{ $ \substack{  \mathbf{e} \mathbf{g} | \mathbf{f} \mathbf{h} \\ \circled{1} }  $ } };
\node (n6) at (0.5,-1)  {\Large{ $\substack{  \mathbf{e} \mathbf{f} \mathbf{g} | \mathbf{h} \\ \circled{3} }  $ } };
\node (n7) at (3,-1)  {\Large{ $  \substack{  e_1 f_1 g_1 | e_2 f_2 g_2  h_1 h_2 \\ \circled{1} } $ } };
\node (n8) at (7,-1)  {\Large{ $ \substack{  e_2 f_2 g_2 | e_1 f_1 g_1  h_1 h_2 \\ \circled{1} } $ } };
\node (n9) at (-6,-4)  {\Large{ $ \substack{  \mathbf{e} | \mathbf{f} \mathbf{g} |  \mathbf{h} \\ \circled{-1} }  $ } };
\node (n10) at (-4,-4) {\Large{ $ \substack{  \mathbf{e} | \mathbf{f} \mathbf{g} |  \mathbf{h}  \\ \circled{-1}  } $ } };
\node (n11) at (-2,-4) {\Large{  $\substack{  \mathbf{e} | \mathbf{f} \mathbf{g} |  \mathbf{h}  \\ \circled{-1}  }  $ } };
\node (n12) at (0,-4) {\Large{ $\substack{  \mathbf{e} | \mathbf{f} \mathbf{g} |  \mathbf{h}  \\ \circled{-1} }   $ } };
\node (n13) at (2,-4) {\Large{ $ \substack{  \mathbf{e} | \mathbf{f} \mathbf{g} |  \mathbf{h}  \\ \circled{-1} } $  } };
\node (n14) at (5,-4) {\Large{ $ \substack{  e_2 f_2 g_2 | e_1 f_1 g_1 | h_1 h_2  \\ \circled{-1} }  $ } };
\node (n15) at (-2,-5.5) {\Large{ $ \substack{  \mathbf{e} | \mathbf{f} | \mathbf{g} | \mathbf{h}   \\ \circled{1} }  $ } };
\draw (n0) -- (n1.north); \draw (n0) -- (n2.north); \draw (n0) -- (n3.north); \draw (n0) -- (n4.north); \draw (n0) -- (n5.north); \draw (n0) -- (n6.north); \draw (n0) -- (n7.north); \draw (n0) -- (n8.north);
\draw (n1.south) -- (n9.north); \draw (n1.south) -- (n10.north); \draw (n1.south) -- (n11.north);
\draw (n2.south) -- (n9.north); \draw (n2.south) -- (n12.north); 
\draw (n3.south) -- (n11.north); \draw (n3.south) -- (n13.north); 
\draw (n4.south) -- (n9.north); \draw (n4.south) -- (n13.north);
\draw (n5.south) -- (n11.north); \draw (n5.south) -- (n12.north);
\draw (n6.south) -- (n10.north); \draw (n6.south) -- (n12.north);  \draw (n6.south) -- (n13.north);  \draw (n6.south) -- (n14.north); 
\draw (n7.south) -- (n14.north);  \draw (n8.south) -- (n14.north); 
\draw (n9.south) -- (n15); \draw (n10.south) -- (n15); \draw (n11.south) -- (n15); \draw (n12.south) -- (n15); \draw (n13.south) -- (n15); 
\end{tikzpicture}
\caption{The join-semilattice $T\ev{D}$ induced by partitions into Eulerian digraphs, where $\mathbf{e}$ denotes $e_1 e_2$ for each $e_1,e_2\in E\ev{D}$, and curly brackets are omitted. The numbers are the values $F\ev{b}$, for each element $b\in T\ev{D}$.}
\label{fig:induced_join_semilattice}
\end{figure}

\begin{remark}
\label{rmk:theta_distinction}
Given an edge $e\in E\ev{D}$, then each walk $\mathbf{w}\in \mathcal{W}^{e}\ev{ D }$ ending at $e$ has a cycle sequence $\textup{cs}\ev{ \mathbf{w} } = \ev{ \beta_1, \ldots, \beta_k}$ (for $k\geq 1$) such that $\theta\ev{ \ev{ \beta_1, \ldots, \beta_k} } = \{ \beta_1, \ldots, \beta_k \}$ is a minimal element of $T\ev{ D }$, where $\theta$ is the forgetful mapping of \Cref{def:orientations}. For each partition $a\in Q\ev{D}$ into cycles, define 
$$\mathcal{W}_a^{e}\ev{ D } := \{ \textbf{w} \in \mathcal{W}^{e}\ev{ D } :  \theta\ev{ \textup{cs}\ev{ \mathbf{w} } } = a \}$$
which implies, 
$$ \mathcal{W}^{e}\ev{D} = \bigsqcup_{a\in Q\ev{D} } \mathcal{W}_a^{e}\ev{ D } $$
\end{remark}

To express the M\"{o}bius function of the join-semilattice $T\ev{D}$, we will need the notion of a ``decomposition pyramid" from \cite{co2}, defined next:
\begin{definition}
\phantom{a}
\begin{enumerate}
\item For a piece $\beta \in \mathcal{B}\ev{D}$, let $E\ev{\beta}$ be the edge set of $\beta$. A pyramid $P \in \mathbbm{p}\ev{D}$ is called a \textit{decomposition pyramid}, provided 
\begin{equation}
\label{eqn:decomposition}
E\ev{D} = \bigsqcup_{\beta \in P} E\ev{\beta}
\end{equation}
is a partition of $E\ev{D}$ into cycles. Let $\mymathbb{dp}\ev{ D }$ be the set of decomposition pyramids of a connected digraph $D$.
\item For a fixed edge $e\in E\ev{D}$, the set of decomposition pyramids with maximal piece containing $e$ is denoted $\mymathbb{dp}^{e}\ev{ \mathcal{B}\ev{D}, \mathcal{R}}$, or succinctly $\mymathbb{dp}^{e}\ev{ D }$.
\end{enumerate}	

\end{definition}
\begin{lemma}
\label{lem:pyramids_and_walks_containing_edge}
Let $D$ be a Eulerian digraph. Assume that $D$ has a partition $a = \{ \beta_1, \ldots, \beta_k \}$ into cycles. Let $\beta\in a$ be a fixed cycle and let $e\in \beta$ be a fixed edge on the piece $\beta$.
\begin{enumerate}
\item A pyramid of $a$ is full if and only if it is a decomposition pyramid: $\mymathbb{P}\ev{a,\mathcal{R}} = \mymathbb{dp}\ev{a,\mathcal{R}}$. 
\item The trails of $D$ with cycle sequence from $a$ and ending at $e$ are in bijection with full pyramids with maximal piece $\beta$:
$$ |\mathcal{W}_a^{e}\ev{ D }|  = |\mathbb{P}^{\beta}\ev{ a ,\mathcal{R} } | $$
\end{enumerate}
\end{lemma}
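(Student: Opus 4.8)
The plan is to treat the two assertions in turn, the first being essentially bookkeeping and the second the real content. For (1): since $a=\{\beta_1,\dots,\beta_k\}$ is a partition of $E\ev{D}$, the edge sets $E\ev{\beta_1},\dots,E\ev{\beta_k}$ already partition $E\ev{D}$, and a pyramid over the pieces of $a$ is \emph{full} exactly when its labelling function is a bijection onto $a$, i.e.\ when each $\beta_i$ occurs once. For such a $P$ one has $\bigsqcup_{\beta\in P}E\ev{\beta}=\bigsqcup_i E\ev{\beta_i}=E\ev{D}$, so $P$ is a decomposition pyramid; conversely, the pieces of a decomposition pyramid over $a$ form a subset of $\{\beta_1,\dots,\beta_k\}$ whose edge sets are pairwise disjoint and exhaust $E\ev{D}$, which forces all $k$ of them to appear, hence fullness. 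This gives $\mymathbb{P}\ev{a,\mathcal{R}}=\mymathbb{dp}\ev{a,\mathcal{R}}$.

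For (2) I would exhibit an explicit bijection $\Phi\colon\mathcal{W}_a^{e}\ev{D}\to\mymathbb{P}^{\beta}\ev{a,\mathcal{R}}$ via cycle sequences: send $\mathbf{w}$ with $\textup{cs}\ev{\mathbf{w}}=\ev{\gamma_1,\dots,\gamma_k}$ to the composition $\Phi\ev{\mathbf{w}}:=H_{\gamma_1}\circ\cdots\circ H_{\gamma_k}$ of the one-piece heaps labelled by the $\gamma_i$, with $\circ$ as in \Cref{def:composition_of_heaps}. Fullness of $\Phi\ev{\mathbf{w}}$ is immediate from $\theta\ev{\textup{cs}\ev{\mathbf{w}}}=a$, since the simple closed subtrails of $\mathbf{w}$ partition $E\ev{D}$ into the cycles of $a$. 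That $\beta$ is the \emph{unique} maximal piece rests on two observations. First, $\gamma_k=\beta$: in the decomposition $\mathbf{w}=\alpha\,\mathbf{w}_1\,\delta$ used to peel off the first simple closed subtrail, the last edge $e$ of $\mathbf{w}$ lies in the tail $\delta$ unless $\delta$ is empty (in which case $\mathbf{w}$ is a single cycle), so by induction on the number of extractions $e$ survives into every successive remainder and is consumed only by the final subtrail $\mathbf{w}_k$; as $e\in\beta$ and distinct cycles of $a$ have disjoint edge sets, $\gamma_k=\beta$. Second, for $i<k$ the cycle $\gamma_i$ is concurrent with some $\gamma_j$, $j>i$: the base vertex of the extracted subtrail $\mathbf{w}_i$ lies on $V\ev{\gamma_i}$ and on the remainder after the $i$-th extraction, a nonempty closed trail whose edges are exactly those of $\gamma_{i+1},\dots,\gamma_k$, so that vertex lies on some $\gamma_j$ with $j>i$. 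By the composition rule these two facts say precisely that $\gamma_k=\beta$ is the only maximal element, so $\Phi$ does land in $\mymathbb{P}^{\beta}\ev{a,\mathcal{R}}$.

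I would build the inverse $\Psi$ by iterated insertion. Let $v_0$ be the terminal vertex of $e$ (the base vertex of every trail in $\mathcal{W}_a^{e}\ev{D}$), fix a linear extension $\gamma_1\prec\cdots\prec\gamma_k=\beta$ of a given $P\in\mymathbb{P}^{\beta}\ev{a,\mathcal{R}}$, and define closed trails $\mathbf{w}^{(k-1)},\dots,\mathbf{w}^{(0)}$: take $\mathbf{w}^{(k-1)}$ to be the unique cyclic traversal of $\beta$ based at $v_0$, which automatically ends with $e$ since $e$ is the only edge of $\beta$ into $v_0$; and obtain $\mathbf{w}^{(i-1)}$ from $\mathbf{w}^{(i)}$ by inserting, in the sense of \Cref{def:insertion_and_concatenation}, the cyclic traversal of $\gamma_i$ at the first vertex of $\mathbf{w}^{(i)}$ reached from $v_0$ lying on $V\ev{\gamma_i}$ — such a vertex exists because in $P$ the piece $\gamma_i$ is concurrent with some $\gamma_j$, $j>i$, all of whose edges are already present in $\mathbf{w}^{(i)}$. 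Then $\Psi\ev{P}:=\mathbf{w}^{(0)}$ is a closed trail at $v_0$ ending with $e$ that uses each edge of $D$ exactly once (because $a$ partitions $E\ev{D}$), hence lies in $\mathcal{W}^{e}\ev{D}$; one then verifies $\theta\ev{\textup{cs}\ev{\Psi\ev{P}}}=a$ and that $\Phi,\Psi$ are mutually inverse, which yields $|\mathcal{W}_a^{e}\ev{D}|=|\mymathbb{P}^{\beta}\ev{a,\mathcal{R}}|$.

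The step I expect to be the real obstacle is the well-definedness of $\Psi$ — that $\mathbf{w}^{(0)}$ is independent of the chosen linear extension of $P$ — together with the dual statement that $\Phi$ is injective even though several linear extensions can present the same heap. The key point is that two non-concurrent pieces $\gamma_i,\gamma_{i+1}$ adjacent in a linear extension have disjoint vertex sets, so inserting their traversals in either order gives the same trail (neither insertion changes the first insertion candidate of the other); transposing such a pair is exactly the elementary move relating two linear extensions of one heap, so $\mathbf{w}^{(0)}$ is an invariant of $P$. Packaging this cleanly is precisely the closed-trail/heap dictionary developed in \cite{co2}, which I would either invoke directly or re-prove for the instance needed here by induction on $|a|$: peel off the first simple closed subtrail $\mathbf{w}_1$ (with cycle $\gamma_1$) on the trail side and the minimal piece $\gamma_1$ of $P$ on the heap side, using \Cref{lem:push_down} to manage the latter; the only extra point to check is that the remainder digraph $D\setminus E\ev{\gamma_1}$ is again connected, which holds because the remainder $\alpha\delta$ is a single closed trail spanning it, so the inductive hypothesis applies.
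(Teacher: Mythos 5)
Your Part 1 is the same bookkeeping argument the paper gives, and it is fine. For Part 2 the paper does essentially no work of its own: it invokes the bijection $g\colon \mathcal{W}^{e}\ev{D}\rightarrow \mymathbb{dp}^{e}\ev{D}$ of \cite[Theorem 2.3]{co2} between Eulerian trails ending at $e$ and decomposition pyramids whose maximal piece contains $e$, notes that $g$ respects the partition $a$ determined by the cycle sequence, and then uses Part 1 to identify $\mymathbb{dp}^{\beta}\ev{a}$ with $\mymathbb{P}^{\beta}\ev{a}$. What you have done is open that black box and reconstruct the bijection in exactly the case needed: your $\Phi$ (compose the one-piece heaps in cycle-sequence order) and $\Psi$ (iterated insertion along a reversed linear extension) are the maps underlying $g$ and its inverse. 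Your supporting observations check out: since the last edge $e$ is the last edge of every successive remainder, the final extracted cycle is $\beta$ and is the unique maximal piece of the composition; each earlier $\gamma_i$ is concurrent with some later $\gamma_j$ via the base vertex of $\mathbf{w}_i$; and transposing adjacent incomparable (hence, by the heap axioms, non-concurrent and therefore vertex-disjoint) pieces of a linear extension commutes with insertion, which gives well-definedness of $\Psi$. The one step you leave genuinely unfinished --- the verification that $\Phi$ and $\Psi$ are mutually inverse --- is precisely the content of the result the paper cites, and you say as much; if you want it self-contained, the induction you sketch does go through (note that \Cref{lem:push_down} applied to a \emph{minimal} element $\gamma_1$ already gives $P=\{\gamma_1\}\circ(P\setminus\{\gamma_1\})$, so no dual statement is needed), though the bookkeeping of \emph{which} minimal piece corresponds to the first extracted subtrail is a little more delicate than your ``only extra point to check'' suggests. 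So: same underlying bijection as the paper, traded from a citation into an explicit construction; the trade is sound but buys you the obligation to prove, or re-cite, the trail--pyramid dictionary.
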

\begin{proof}

\phantom{a}

\begin{enumerate}
\item Given a full pyramid $P \in \mymathbb{P}\ev{a}$, then we have $E\ev{P} = E\ev{D}$, and so $P$ is a decomposition pyramid. Conversely, if $P$ is a decomposition pyramid, then $E\ev{D} = E\ev{P}$, and it follows by (\ref{eqn:decomposition}), that every cycle of $a$ must appear in $P$ and so, $P$ is a full pyramid. 
\item By \cite[Theorem 2.3, p.~12]{co2}, there is a bijection 
$$ 
\bigsqcup_{a\in Q\ev{D} } \mathcal{W}_a^{e}\ev{ D } = \mathcal{W}^{e}\ev{D} \underset{g}{\rightarrow} \mymathbb{dp}^{e}\ev{ D } = \bigsqcup_{a\in Q\ev{D} } \mymathbb{dp}^{e}\ev{ a }
$$

By the definition of $\mathcal{W}_a^{e}\ev{D}$, $\mymathbb{dp}^{e}\ev{ a }$ and $g$, the restriction $\mathcal{W}_a^{e}\ev{D} \rightarrow \mymathbb{dp}^{e}\ev{ a  }$ is also bijective. As $\beta$ is the unique piece containing $e$, we infer that,
$$
| \mathcal{W}_{a}^{e}\ev{D} | =  | \mymathbb{dp}^{e}\ev{ a } |  = | \mymathbb{dp}^{\beta}\ev{ a } | = |\mathbb{P}^{\beta}\ev{ a } | 
$$
where the last step follows from Part 1.
\end{enumerate}
\end{proof}

\begin{lemma}
\label{lem:up_set_isom_conn}
Let $D$ be a connected Eulerian digraph. Then, for each minimal element $a \in T\ev{D}$, the up-set $\upset[T\ev{D}]{a}$ is a lattice, where the meet and join operations are the restrictions of $\wedge_{\Pi\ev{D}}$ and $\vee_{\Pi\ev{D}}$ to $\upset[T\ev{D}]{a}$. Furthermore, we have a lattice isomorphism between $\upset[T\ev{D}]{a}$ and the bond lattice of $a$:
$$ 
\upset[T\ev{D}]{a} \rightarrow L\ev{ a }  
$$
\end{lemma}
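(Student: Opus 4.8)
The plan is to realize $\upset[T\ev{D}]{a}$ as the bond lattice $L\ev{a}$ by recording which cycles of $a$ get merged. Write the minimal element as $a=\{\beta_1,\ldots,\beta_k\}$. Since $a$ partitions $E\ev{D}$ into the edge sets $E\ev{\beta_i}$, the coarsenings of $a$ inside $\Pi\ev{E\ev{D}}$ are in order-preserving bijection with the partitions of the block set $\{\beta_1,\ldots,\beta_k\}$ through the standard map
\[
\Phi\colon \{\mathcal{B}_1,\ldots,\mathcal{B}_m\}\ \longmapsto\ \Bigl\{\,\textstyle\bigsqcup_{\beta\in\mathcal{B}_j}E\ev{\beta}\ :\ 1\le j\le m\,\Bigr\},
\]
which is a lattice isomorphism $\Pi\ev{\{\beta_1,\ldots,\beta_k\}}\to\upset[\Pi\ev{E\ev{D}}]{a}$. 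So the proof reduces to (i) identifying which coarsenings of $a$ actually lie in $T\ev{D}$ and (ii) checking that the induced structure is that of $L\ev{a}=\textup{Conn}\ev{a}$.

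For (i), the crux is: $\Phi\ev{\{\mathcal{B}_1,\ldots,\mathcal{B}_m\}}\in T\ev{D}$ if and only if every $\mathcal{B}_j$ is connected in the intersection graph $G_{a,\mathcal{R}}$, i.e.\ $\{\mathcal{B}_1,\ldots,\mathcal{B}_m\}\in\textup{Conn}\ev{a}$. The ``if'' direction uses (\ref{eqn:eulerian_iff}): when $\mathcal{B}_j$ is connected, the digraph on $\bigsqcup_{\beta\in\mathcal{B}_j}E\ev{\beta}$ is connected---consecutive cycles along a path in $G_{a,\mathcal{R}}[\mathcal{B}_j]$ share a vertex---and is a disjoint union of the Eulerian digraphs $\beta\in\mathcal{B}_j$, hence Eulerian, so its $F$-factor is nonzero. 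The ``only if'' direction is the contrapositive: if a block $B$ of some $b\in\upset[T\ev{D}]{a}$ equals $\bigsqcup_{\beta\in S}E\ev{\beta}$ with $S=S'\sqcup S''$ disconnected in $G_{a,\mathcal{R}}$, then $\bigcup_{\beta\in S'}V\ev{\beta}$ and $\bigcup_{\beta\in S''}V\ev{\beta}$ are disjoint and $B$ carries no edge between them, so $B$ is a disconnected digraph with no Eulerian circuit, contradicting $b\in T\ev{D}$. Thus $\Phi$ restricts to a bijection $L\ev{a}\to\upset[T\ev{D}]{a}$ which preserves and reflects the refinement order, hence a poset isomorphism; since $L\ev{a}$ is a lattice, $\upset[T\ev{D}]{a}$ with the order inherited from $\Pi\ev{E\ev{D}}$ is a lattice, and this is the displayed isomorphism.

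What remains, and what I expect to be the delicate part, is to pin down the two lattice operations of $\upset[T\ev{D}]{a}$ against the ambient operations of $\Pi\ev{E\ev{D}}$. The join is painless: by \Cref{lem:unique_min_lattice}, $T\ev{D}$ is closed under $\vee_{\Pi\ev{E\ev{D}}}$, and a join of coarsenings of $a$ is again a coarsening of $a$, so the join on $\upset[T\ev{D}]{a}$ really is the restriction of $\vee_{\Pi\ev{E\ev{D}}}$ (and, correspondingly, $\textup{Conn}\ev{a}$ is closed under $\vee_{\Pi\ev{\{\beta_i\}}}$, since overlapping connected blocks have connected union). The meet needs more care: I would observe that a finite join-semilattice with a least element---here $a$---is automatically a lattice, so the meet on $\upset[T\ev{D}]{a}$ exists, and then identify it with the bond-lattice meet of $L\ev{a}$, which in edge-set terms takes $b_1\wedge b_2$ to be obtained from the common refinement $b_1\wedge_{\Pi\ev{E\ev{D}}}b_2$ by further splitting each of its blocks---a disjoint union of cycles of $a$, possibly disconnected---into the sub-digraphs spanned by the connected components of the associated cycle-set in $G_{a,\mathcal{R}}$. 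The subtle point is exactly that $b_1\wedge_{\Pi\ev{E\ev{D}}}b_2$ can have a non-Eulerian (disconnected) part, so the meet of the up-set is in general a refinement of the ambient meet rather than literally its restriction; making this precise is the one place where the argument requires genuine care.
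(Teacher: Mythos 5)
Your construction is the same as the paper's: the paper simply writes down the mutually inverse maps $\{D_1,\ldots,D_k\}\mapsto\{B_1,\ldots,B_k\}$ with $B_i=\{\beta\in a: E\ev{\beta}\subseteq E\ev{D_i}\}$ and $\{B_1,\ldots,B_k\}\mapsto\lc\bigcup_{\beta\in B_i}E\ev{\beta}\rc_i$, and declares the order-compatibility ``routine to check.'' Your write-up supplies exactly the checks the paper omits --- that a coarsening of $a$ lies in $T\ev{D}$ if and only if the corresponding partition of the cycle set has blocks connected in the intersection graph $G_a$ --- and both directions of that equivalence are argued correctly.

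The one place where you go beyond the paper is the caveat about the meet, and you are right to raise it: $\upset[T\ev{D}]{a}$ is closed under $\vee_{\Pi\ev{E\ev{D}}}$ but not, in general, under $\wedge_{\Pi\ev{E\ev{D}}}$, so the clause asserting that the meet is ``the restriction of $\wedge_{\Pi\ev{D}}$'' is not literally correct as stated. Concretely, take four directed $2$-cycles $\beta_1,\ldots,\beta_4$ arranged in a ring so that $G_a$ is a $4$-cycle; then $b_1$ with blocks $E\ev{\beta_1}\cup E\ev{\beta_2}\cup E\ev{\beta_3}$ and $E\ev{\beta_4}$, and $b_2$ with blocks $E\ev{\beta_1}\cup E\ev{\beta_4}\cup E\ev{\beta_3}$ and $E\ev{\beta_2}$, both lie in $\upset[T\ev{D}]{a}$, yet their common refinement in $\Pi\ev{E\ev{D}}$ has the block $E\ev{\beta_1}\cup E\ev{\beta_3}$, which is disconnected and hence not Eulerian. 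Your resolution --- the up-set is a finite join-semilattice with least element $a$, hence a lattice whose meet is the order-theoretic one, equivalently the one transported from $L\ev{a}$, obtained by splitting each block of the ambient meet into the connected components of its cycle set in $G_a$ --- is the correct reading, and it matches how the bond-lattice meet actually behaves. None of this affects the paper downstream: \Cref{thm:mobius_inv_cycles} and \Cref{prop:hopefully_original} use only the poset isomorphism $\upset[T\ev{D}]{a}\cong L\ev{a}$ to transfer M\"obius-function values, for which your argument fully suffices.
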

\begin{proof}
We define maps $\phi$ and $\psi$ which are inverses of each other:
\begin{align*}
\phi: \upset[T\ev{D}]{a} & \rightarrow L\ev{ a }\\
\{ D_1,\ldots,D_k \} & \mapsto \{ B_1,\ldots, B_k \} \\ 
& \text{ where $ B_i = \{ \beta \in a: E\ev{\beta} \subseteq E\ev{D_i} \}$} \\ 
\psi: L\ev{ a } & \rightarrow \upset[T\ev{D}]{a} \\
\{ B_1,\ldots, B_k \}	& \mapsto \lc \bigcup_{\beta\in B_1} E\ev{\beta}, \ldots, \bigcup_{\beta\in B_k} E\ev{\beta} \rc
\end{align*}
It is routine to check that $\phi$ and $\psi$ also respect the partial order relations $\leq_{T\ev{D}}$ and $\leq_{L\ev{a}}$.
\end{proof}

\begin{theorem}
\label{thm:mobius_inv_cycles}
Let $D$ be a connected Eulerian digraph that is not a cycle. Then, we have
$$G\ev{ \mathbbm{1}_D } = \sum_{ a \leq \mathbbm{1}_D }  F\ev{a} = 0 $$
\end{theorem}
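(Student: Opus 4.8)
The plan is to rewrite $G\ev{\mathbbm{1}_D}$, which equals $\sum_{a \in T\ev{D}} F\ev{a}$ because $F$ vanishes off $T\ev{D}$, as a double sum of M\"{o}bius values of bond lattices, and then to recognise the inner sum as the total M\"{o}bius sum of a nontrivial lattice, which is $0$. Since $F\ev{a} = \ev{-1}^{|a|} \prod_{A \in a} f_1\ev{A}$ with $f_1\ev{A} = |\mathfrak{C}\ev{A}|$, the first task is to express one factor $f_1\ev{A}$ through the machinery of \Cref{sec:bond_lattice_mobius}.

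For Eulerian $A \subseteq E\ev{D}$ and any fixed edge $e \in A$, I would chain together facts already in hand: $f_1\ev{A} = |\mathfrak{C}\ev{A}| = |\mathcal{W}^{e}\ev{A}|$; the decomposition $\mathcal{W}^{e}\ev{A} = \bigsqcup_{a' \in Q\ev{A}} \mathcal{W}_{a'}^{e}\ev{A}$ of \Cref{rmk:theta_distinction}; the identity $|\mathcal{W}_{a'}^{e}\ev{A}| = |\mathbb{P}^{\beta}\ev{a'}|$ of \Cref{lem:pyramids_and_walks_containing_edge}, where $\beta$ is the unique cycle of $a'$ through $e$; and the M\"{o}bius formula $|\mathbb{P}^{\beta}\ev{a'}| = \ev{-1}^{|a'| - 1} \mu_{L\ev{a'}}\ev{\mymathbb{0}, \mymathbb{1}}$ of \Cref{lem:balanced_pieces_mobius}. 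Together these give
$$ f_1\ev{A} \;=\; \sum_{a' \in Q\ev{A}} \ev{-1}^{|a'| - 1}\, \mu_{L\ev{a'}}\ev{\mymathbb{0}, \mymathbb{1}}. $$

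Substituting this into $G\ev{\mathbbm{1}_D} = \sum_{a \in T\ev{D}} \ev{-1}^{|a|} \prod_{A \in a} f_1\ev{A}$ and expanding the product over the blocks of $a$, each term of the expansion chooses a partition into cycles $a'_A \in Q\ev{A}$ for every block $A$ of $a$; the data of $a$ together with $\ew{a'_A}_{A \in a}$ is precisely the data of a partition into cycles $a^{*} := \bigsqcup_{A} a'_A \in Q\ev{D}$ refining $a$, with $a'_A$ recovered as $\lc \gamma \in a^{*} : \gamma \subseteq A \rc$, and conversely every pair $\ew{a^{*}, a}$ with $a^{*} \in Q\ev{D}$, $a^{*} \le a \in T\ev{D}$ arises this way uniquely. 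A sign count gives $\ev{-1}^{|a|} \prod_{A} \ev{-1}^{|a'_A| - 1} = \ev{-1}^{|a^{*}|}$, so reindexing by $a^{*}$ yields
$$ G\ev{\mathbbm{1}_D} \;=\; \sum_{a^{*} \in Q\ev{D}} \ev{-1}^{|a^{*}|} \sum_{\substack{a \in T\ev{D} \\ a \ge a^{*}}} \;\prod_{A \in a} \mu_{L\ev{a'_A}}\ev{\mymathbb{0}, \mymathbb{1}}. $$
Now fix $a^{*} \in Q\ev{D}$. By \Cref{lem:up_set_isom_conn}, $\upset[T\ev{D}]{a^{*}} \cong L\ev{a^{*}}$, an element $a$ of the up-set corresponding to a partition $\pi = \lc P_1, \ldots, P_s \rc$ of the cycle-set $a^{*}$ into parts connected in the intersection graph $G_{a^{*}}$; under this correspondence $a'_A = P_i$, so $L\ev{a'_A} = L\ev{P_i}$ is the bond lattice of the induced subgraph on $P_i$. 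By \Cref{lem:heap_lattice_product_poset_isom}, $\downset[L\ev{a^{*}}]{\pi} \cong \prod_{i} L\ev{P_i}$, hence by the product rule for M\"{o}bius functions together with \Cref{lem:mu_down_set} we get $\prod_{i} \mu_{L\ev{P_i}}\ev{\mymathbb{0}, \mymathbb{1}} = \mu_{\downset[L\ev{a^{*}}]{\pi}}\ev{\mymathbb{0}, \pi} = \mu_{L\ev{a^{*}}}\ev{\mymathbb{0}, \pi}$, so the inner sum collapses to $\sum_{\pi \in L\ev{a^{*}}} \mu_{L\ev{a^{*}}}\ev{\mymathbb{0}, \pi}$.

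Finally, since $D$ is connected and Eulerian but not a single directed cycle, every $a^{*} \in Q\ev{D}$ has $|a^{*}| \ge 2$, so $L\ev{a^{*}}$ has distinct bottom and top; hence by the defining recurrence for the M\"{o}bius function, $\sum_{\pi \in L\ev{a^{*}}} \mu_{L\ev{a^{*}}}\ev{\mymathbb{0}, \pi} = \sum_{\mymathbb{0} \le \pi \le \mymathbb{1}} \mu_{L\ev{a^{*}}}\ev{\mymathbb{0}, \pi} = 0$. Every summand of the outer sum vanishes, so $G\ev{\mathbbm{1}_D} = 0$. The step I expect to demand the most care is the reindexing: checking that $\lc \gamma \in a^{*} : \gamma \subseteq A \rc$ is genuinely a partition of $A$ into cycles (this uses that distinct cycles of $a^{*}$ are edge-disjoint and that $a^{*}$ refines $a$), that $a \mapsto \ew{a^{*}, \ew{a'_A}_A}$ is a bijection onto the pairs $\ew{a^{*}, a}$ above, and that $L\ev{a'_A}$ coincides with the induced-subgraph bond lattice $L\ev{P_i}$ so that \Cref{lem:heap_lattice_product_poset_isom} and the product rule for $\mu$ apply verbatim; everything else is sign-bookkeeping and an appeal to the recursion for $\mu$.
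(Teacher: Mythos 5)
Your proof is correct, but it takes a genuinely different route from the paper's. The paper argues by induction on $|E\ev{D}|$: it first shows $G\ev{b}=0$ for every non-minimal $b<\mymathbb{1}_D$ via multiplicativity of $G$ over the blocks of $b$ and the inductive hypothesis, then applies M\"{o}bius inversion on $T\ev{D}$ at the top, so that $F\ev{\mymathbb{1}_D} = G\ev{\mymathbb{1}_D} + \sum_{a\in Q\ev{D}}\mu\ev{a,\mymathbb{1}_D}G\ev{a}$; the minimal-element terms are then evaluated with \Cref{lem:up_set_isom_conn}, \Cref{lem:balanced_pieces_mobius} and \Cref{lem:pyramids_and_walks_containing_edge} and sum to $-|\mathfrak{C}\ev{D}| = F\ev{\mymathbb{1}_D}$, which forces $G\ev{\mymathbb{1}_D}=0$. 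You instead expand $G\ev{\mymathbb{1}_D}$ directly from the bottom: you convert each factor $f_1\ev{A}$ into a signed sum of M\"{o}bius values of bond lattices (using the same three lemmas plus \Cref{rmk:theta_distinction}), reindex by the common refinement $a^{*}\in Q\ev{D}$, and collapse the inner sum over $\upset[T\ev{D}]{a^{*}}\cong L\ev{a^{*}}$ to $\sum_{\pi\in L\ev{a^{*}}}\mu_{L\ev{a^{*}}}\ev{\mymathbb{0},\pi}$, which vanishes because $|a^{*}|\geq 2$ whenever $D$ is not a single directed cycle. This avoids induction entirely, and it is essentially the specialization at $t=1$ of the computation the paper later performs for \Cref{prop:hopefully_original} (namely $s_D\ev{0}=-\sum_{a}\ev{-1}^{|a|}\chi_{L\ev{a}}\ev{1}$ with $\chi_{L\ev{a}}\ev{1}=0$ for a nontrivial lattice). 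What the paper's induction buys is that it never needs the full reindexing over $Q\ev{D}$ for arbitrary elements of $T\ev{D}$; what your route buys is transparency about the source of the cancellation and the fact that the polynomial identity of \Cref{prop:hopefully_original} comes for free. The points you should make explicit are exactly the ones you flag: that $\lc \gamma\in a^{*}:\gamma\subseteq A\rc$ is a partition of $A$ into cycles (edge-disjointness of the blocks of $a^{*}$ plus refinement), that the concurrence relation on each block $P_i$ is the induced subgraph of $G_{a^{*}}$ so that \Cref{lem:heap_lattice_product_poset_isom}, the product rule for $\mu$, and \Cref{lem:mu_down_set} apply verbatim, and that $G_{a^{*}}$ is connected (from connectivity of $D$) so that $L\ev{a^{*}}$ genuinely has a top element distinct from $\mymathbb{0}$, which is what makes $\sum_{\pi}\mu\ev{\mymathbb{0},\pi}=0$.
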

\begin{proof}
Fix an edge $e\in E\ev{D}$. For each minimal element $a\in Q\ev{D}$, let $\beta_a$ be the unique piece of $a$, containing $e$. By \Cref{rmk:theta_distinction}, we have:
$$
|\mathfrak{C}\ev{D}| = |\mathcal{W}^{e}\ev{D}| = \sum_{a\in Q\ev{D} } |\mathcal{W}_a^{e}\ev{ D }|
$$
For a minimal element $a\in Q\ev{D}$, we have $G\ev{ a } = F\ev{ a } = \ev{ -1 }^{|a|}$. Also, by \Cref{lem:up_set_isom_conn}, the up-set of $a$ is isomorphic to the bond lattice, $\upsetdisplay[T\ev{ D }]{a} \cong L\ev{ a }$, and so by \Cref{lem:balanced_pieces_mobius}, the M\"{o}bius function satisfies $\mu_{\upset[T\ev{D}]{a} }\ew{ a, \mymathbb{1}_{T\ev{D}} } = \mu_{ L\ev{ a } }\ew{ \mymathbb{0}_{L\ev{a}}, \mymathbb{1}_{L\ev{a}} }  = \ev{ -1 }^{ 1 - |a| } | \mathbb{P}^{ \beta_a } \ev{ a, \mathcal{R} } |$.

We apply induction on the number of edges $|E\ev{D}|$, to show that $G\ev{ \mathbbm{1}_D } = 0$. For the basis step, let $D$ be a connected Eulerian digraph with $|E\ev{D} | = 2$. Then, the statement vacuously holds, as the only connected Eulerian digraph on two edges is the single directed cycle. For the inductive step, let $|E\ev{D} | \geq 3$ be fixed. 

Consider any non-minimal element $b = \{B_1,\ldots,B_k\}\in T\ev{ D }$ with $b <_{T\ev{D}} \mymathbb{1}_D$. In particular, $b$ has at least $ k\geq 2 $ parts, each part defining an Eulerian digraph. Also, $b$ has at least one component, say $B_{i_0}$ (for $1\leq i_0 \leq k$), which is not a cycle and satisfies $|B_{i_0}| < |E\ev{D}|$. By the inductive hypothesis, we obtain $G\ev{ \mymathbb{1}_{B_{i_0}} } = 0$. On the other hand, for each $c \leq_{T\ev{D}} b$, there is a partition of $c := c_1 \sqcup \ldots \sqcup c_k$ such that $c_i = \{ C_{i 1},\ldots, C_{i r_i}\} $ and $B_i = C_{i 1} \sqcup \ldots \sqcup C_{i r_i}$. In other words, we have $c_i \in T\ev{B_i}$, for each $i=1,\ldots,k$. It follows that,
\begin{align*}
& G\ev{ b } = \sum_{ c\leq_{T\ev{D}} b} F\ev{ c } = \sum_{ \substack{c\leq_{T\ev{D}} b \\ c = c_1 \cup \ldots \cup c_k, \\ c_i \in T\ev{B_i} } } \prod_{i = 1}^{ k } \prod_{j = 1}^{ r_i } - | \mathfrak{C}\ev{ C_{ij} } |  \\
& = \prod_{i=1}^{ k } \sum_{ x = \{X_1,\ldots,X_m\} \leq \mymathbb{1}_{B_i} } \prod_{j = 1}^{ m } - | \mathfrak{C}\ev{ X_j } |  \\
& = \prod_{i=1}^{ k } \sum_{ x \leq_{T\ev{B_i}} \mymathbb{1}_{B_i} } F\ev{x} = \prod_{i=1}^{ k } G\ev{ \mymathbb{1}_{B_i}  } = 0 & \text{ by the inductive hypothesis, since $G\ev{\mymathbb{1}_{B_{i_0}}}  = 0$ }. 
\end{align*} 

Therefore,
\begin{flalign*}
& -  | \mathfrak{C}\ev{ D } | =  F\ev{ \mathbbm{1} } = \sum_{ b \in T\ev{D} } \mu\ev{ b, \mathbbm{1} } G\ev{b} &&\hspace{-7cm}\text{ by M\"{o}bius inversion }\\ 	
&  =   \mu\ev{ \mathbbm{1}, \mathbbm{1} } G\ev{ \mathbbm{1} } + \sum_{ a \in Q\ev{D} } \mu\ev{ a, \mathbbm{1} } G\ev{a} + \sum_{ \substack{ b \notin Q\ev{D} \\ b\neq \mymathbb{1}} } \mu\ev{ b, \mathbbm{1} } G\ev{b}  \\ 
&  = G\ev{ \mathbbm{1} }  +  \sum_{ a \in Q\ev{D} } \mu_{ T\ev{D} }\ev{ a, \mathbbm{1} } G\ev{a} &&  \\ 
& \hspace{0cm}\text{ by the inductive hypothesis, $G\ev{b} = 0$, for each $b\in T\ev{D} \setminus Q\ev{D}$ such that $b\neq \mymathbb{1}$, as shown above } && \\ 
&  = G\ev{ \mathbbm{1} }  +  \sum_{ a \in Q\ev{D} } \mu_{ \upset[T\ev{D}]{a} }\ev{ a, \mathbbm{1} } G\ev{a} &&\hspace{-7cm}\text{ by \Cref{lem:mu_up_set}} 
\end{flalign*}
\begin{flalign*}
&  = G\ev{ \mathbbm{1} }  +  \sum_{ a \in Q\ev{D} } \mu_{ L\ev{a} }\ev{ a, \mathbbm{1} } G\ev{a} &&\hspace{-7cm}\text{ by \Cref{lem:up_set_isom_conn}} \\ 
& = G\ev{ \mathbbm{1} }  +  \sum_{ a \in Q\ev{D} } \ev{ -1 }^{ 1 - |a| } \cdot | \mathbb{P}^{ \beta_a } \ev{ a, \mathcal{R} } | \cdot \ev{ -1 }^{|a|}  && \\ 
& \hspace{0cm} \text{ by \Cref{lem:balanced_pieces_mobius} and the equation $G\ev{ a } = \ev{ -1 }^{|a|}$ } &&  \\ 
& =  G\ev{ \mathbbm{1} }  - \sum_{ a \in Q\ev{D} } | \mathcal{W}_a^{ e  }\ev{ D } |  && \hspace{-7cm}\text{ by \Cref{lem:pyramids_and_walks_containing_edge}}\\ 
& = G\ev{ \mathbbm{1} }  - | \mathfrak{C}\ev{ D } |    
\end{flalign*}
which implies that $G\ev{ \mathbbm{1} } =0$. 
\end{proof}

As a corollary, we have:
\begin{corollary}
\label{cor:indecomp_zero}
Let $D$ be a (connected) Eulerian digraph that is not a cycle. Then, 
$$ \sum_{k\geq 1} \ev{ -1 }^{k} f_k\ev{D} = 0 $$
\end{corollary}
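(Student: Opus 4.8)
The plan is to recognize that this corollary is an immediate reformulation of \Cref{thm:mobius_inv_cycles} via the identity relating $f_k(D)$ to the function $F$ on the partition lattice. Recall that in \Cref{def:number_cycles} and the displayed equation following it, we established
$$f_k\ev{D} = \ev{-1}^{k} \sum_{ \substack{ a \leq \mathbbm{1}_D \\ |a| = k } } F\ev{a}.$$
So the first step is simply to multiply by $\ev{-1}^k$ and observe the signs cancel: $\ev{-1}^k f_k\ev{D} = \ev{-1}^{2k} \sum_{a \leq \mathbbm{1}_D,\ |a|=k} F\ev{a} = \sum_{a \leq \mathbbm{1}_D,\ |a|=k} F\ev{a}$.

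Next I would sum over $k \geq 1$. Since every partition $a \leq \mathbbm{1}_D$ of $E\ev{D}$ has $|a| = k$ for exactly one value $k \geq 1$, the double sum $\sum_{k\geq 1}\sum_{a \leq \mathbbm{1}_D,\ |a|=k} F\ev{a}$ collapses to $\sum_{a \leq \mathbbm{1}_D} F\ev{a}$, which is precisely $G\ev{\mathbbm{1}_D}$ by \Cref{def:number_cycles}. Applying \Cref{thm:mobius_inv_cycles}, which requires exactly the hypothesis that $D$ is connected, Eulerian, and not a cycle, gives $G\ev{\mathbbm{1}_D} = 0$, completing the argument.

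There is essentially no obstacle here: all the work has already been done in \Cref{thm:mobius_inv_cycles}, and this corollary is the bookkeeping translation back to the quantities $f_k\ev{D}$ appearing in the original cancellation property \eqref{eqn:cancellation}. The only point worth a sentence of care is confirming that $F$ is a well-defined finitely-supported function on the (finite) partition lattice $\Pi\ev{E\ev{D}}$, so that rearranging the finite sum $\sum_{k}\sum_{|a|=k}$ into $\sum_{a}$ is legitimate — which is immediate since $E\ev{D}$ is finite.

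\begin{proof}
Recall from \Cref{def:number_cycles} that for each $k\geq 1$,
$$f_k\ev{D} = \ev{-1}^{k} \sum_{ \substack{ a \leq \mathbbm{1}_D \\ |a| = k } } F\ev{a},$$
so that $\ev{-1}^{k} f_k\ev{D} = \sum_{ a \leq \mathbbm{1}_D,\ |a| = k } F\ev{a}$. Summing over $k\geq 1$ and noting that each $a\leq \mathbbm{1}_D$ is counted once, for $k = |a|$, we obtain
$$\sum_{k\geq 1} \ev{-1}^{k} f_k\ev{D} = \sum_{a \leq \mathbbm{1}_D} F\ev{a} = G\ev{\mathbbm{1}_D}.$$
Since $D$ is a connected Eulerian digraph that is not a cycle, \Cref{thm:mobius_inv_cycles} gives $G\ev{\mathbbm{1}_D} = 0$, and the claim follows.
\end{proof}
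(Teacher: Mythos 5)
Your proof is correct and follows essentially the same route as the paper: rewrite $\ev{-1}^k f_k\ev{D}$ as $\sum_{|a|=k} F\ev{a}$, collapse the double sum to $G\ev{\mathbbm{1}_D}$, and invoke \Cref{thm:mobius_inv_cycles}. The only cosmetic difference is that the paper indexes the sums over $T\ev{D}$ while you use the full partition lattice, which is harmless since $F$ vanishes off $T\ev{D}$.
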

\begin{proof}
$$
f_k\ev{D} = \sum_{ \substack{ b = \{ B_1,\ldots, B_k \} \leq_{T\ev{D}} \mathbbm{1}_D \\ |b| = k } } \prod_{i=1}^{k} | \mathfrak{C}\ev{ B_i } | = \sum_{ \substack{ b \leq_{T\ev{D}}\mathbbm{1}_D \\ |b| = k } } \ev{-1}^{k} F\ev{b}  = \ev{-1}^{k} \sum_{ \substack{ b \leq_{T\ev{D}}\mathbbm{1}_D \\ |b| = k } }  F\ev{b} 
$$
Therefore, we obtain 
$$ \sum_{k\geq 1} \ev{ -1 }^{k} f_k\ev{D} =  \sum_{k\geq 1} \sum_{ \substack{ b \leq_{T\ev{D}}\mathbbm{1}_D \\ |b| = k } } F\ev{b} = \sum_{ b\leq_{T\ev{D}}\mathbbm{1}_D } F\ev{b} = G\ev{ \mymathbb{1}_{D} } = 0$$
\end{proof}

\subsection{The Martin Polynomial}
\label{sec:martin_poly}
The definition below is found in \cite{bollobas,lasvergnas, martin}:
\begin{definition}
Given a connected digraph $D$,
\label{def:martin_circuit_partition}
\begin{enumerate}
\item The \textit{circuit partition polynomial} of $D$ is 
$$
r_{D}\ev{t} := \sum_{k\geq 1} t^{k} f_k\ev{D} 
$$
\item The \textit{Martin polynomial} of $D$ is
$$
s_D\ev{t} := \dfrac{1}{t-1} r_D\ev{t-1} =  \sum_{k\geq 1} \ev{t-1}^{k-1} f_{k}\ev{D} 
$$
\end{enumerate}
\end{definition}
Given a (connected) Eulerian digraph with maximum out-degree $\Delta \geq 2$, then it is shown in \cite{lasvergnas} that $s_D\ev{t} $ is divisible by $\prod_{i=2}^{\Delta} (t + \Delta - i) = \ev{ t + \Delta - 2} \cdots \ev{t+1} \cdot t$. In particular, unless $D$ is a single directed cycle, we obtain 
$$
s_D\ev{-\ev{\Delta-2}}=\ldots = s_D\ev{0} = 0 
$$
and in particular, (\ref{eqn:cancellation}) follows. Evaluations of $t$ at other integer values also yield interesting results (see \cite{lasvergnas,bollobas}):
\begin{enumerate}
\item $r_D\ev{0}=0$ and $s_D\ev{1} = f_1\ev{D} = |\mathfrak{C}\ev{D}| $ 
\item $r_D\ev{1} = s_D\ev{2} = \sum_{k\geq 1} f_k\ev{D} = \prod_{v\in V\ev{D}} \textup{deg}_{D}^{+}\ev{v}! $
\item For a fixed integer $k>0$, see \cite[Corollary 2, p.~265]{bollobas} for an expression of $r\ev{-k}$ in terms of cycles of $D$. 
\end{enumerate}

We derive an identity connecting the Martin polynomial (and the circuit partition polynomial) to the bond lattices of intersection graphs of partitions of $D$ into cycles. 
\begin{proposition}
\label{prop:hopefully_original}
Let $D$ be a (connected) Eulerian digraph. Then
$$
s_D\ev{1-t} = - \sum_{a\in Q\ev{D}} \ev{-1}^{|a|} \chi_{L\ev{a}}\ev{t} 
$$
where $\chi_{L\ev{a}}$ is the characteristic polynomial of the bond lattice $L\ev{a}$ (q.v.~\Cref{def:posets_prelim}).
\end{proposition}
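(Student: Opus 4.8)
The plan is to expand both sides of the claimed identity as polynomials in $t$ whose coefficients are sums over the join-semilattice $T\ev{D}$ indexed by the number of parts, and then match them coefficient-by-coefficient using M\"{o}bius inversion on $T\ev{D}$. First I would handle the left-hand side: substituting $s=1-t$ into $s_D\ev{s}=\sum_{k\geq 1}\ev{s-1}^{k-1}f_k\ev{D}$ gives $s_D\ev{1-t}=\sum_{k\geq 1}\ev{-t}^{k-1}f_k\ev{D}$, and combining this with the identity $f_k\ev{D}=\ev{-1}^{k}\sum_{b\in T\ev{D},\,|b|=k}F\ev{b}$ from the proof of \Cref{cor:indecomp_zero}, the signs collapse to
\[
s_D\ev{1-t} \;=\; -\sum_{b\in T\ev{D}} t^{\,|b|-1}\,F\ev{b}.
\]

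Next I would rewrite the right-hand side using \Cref{lem:up_set_isom_conn}. For a fixed $a\in Q\ev{D}$, the isomorphism $\upset[T\ev{D}]{a}\cong L\ev{a}$ carries $a$ to $\mymathbb{0}_{L\ev{a}}$ and $\mymathbb{1}_D$ to $\mymathbb{1}_{L\ev{a}}$; since the intersection graph $G_a$ is connected on $|a|$ vertices, $\textup{rk}\ev{L\ev{a}}=|a|-1$, and an element $b\in\upset[T\ev{D}]{a}$ with $|b|$ parts corresponds to an element of $L\ev{a}$ of rank $|a|-|b|$. Hence the exponent $\textup{rk}\ev{L\ev{a}}-\textup{rk}$ appearing in the definition of $\chi_{L\ev{a}}$ equals $|b|-1$ for the element corresponding to $b$, while its M\"{o}bius coefficient from $\mymathbb{0}_{L\ev{a}}$ equals $\mu_{\upset[T\ev{D}]{a}}\ev{a,b}$, which in turn equals $\mu_{T\ev{D}}\ev{a,b}$ by \Cref{lem:mu_up_set}. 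Therefore $\chi_{L\ev{a}}\ev{t}=\sum_{b\in T\ev{D},\,b\geq a}\mu_{T\ev{D}}\ev{a,b}\,t^{|b|-1}$, and summing over $a$,
\[
-\sum_{a\in Q\ev{D}} \ev{-1}^{|a|}\chi_{L\ev{a}}\ev{t} \;=\; -\sum_{b\in T\ev{D}} t^{\,|b|-1} \sum_{\substack{a\in Q\ev{D}\\ a\leq b}} \ev{-1}^{|a|}\,\mu_{T\ev{D}}\ev{a,b}.
\]

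Comparing the two displays, it remains to prove that $F\ev{b}=\sum_{a\in Q\ev{D},\,a\leq b}\ev{-1}^{|a|}\mu_{T\ev{D}}\ev{a,b}$ for every $b\in T\ev{D}$. This I would deduce from M\"{o}bius inversion of $G\ev{b}=\sum_{c\leq_{T\ev{D}}b}F\ev{c}$, which gives $F\ev{b}=\sum_{c\leq_{T\ev{D}}b}\mu_{T\ev{D}}\ev{c,b}\,G\ev{c}$, together with the fact that $G\ev{c}=\ev{-1}^{|c|}$ if $c\in Q\ev{D}$ and $G\ev{c}=0$ otherwise. The latter is essentially contained in the proof of \Cref{thm:mobius_inv_cycles}: writing $c=\{C_1,\ldots,C_r\}$, that argument shows $G\ev{c}=\prod_{j=1}^{r}G\ev{\mymathbb{1}_{C_j}}$, while $G\ev{\mymathbb{1}_{C_j}}=F\ev{\mymathbb{1}_{C_j}}=-1$ when $C_j$ is a single directed cycle and $G\ev{\mymathbb{1}_{C_j}}=0$ when $C_j$ is a connected Eulerian digraph that is not a cycle. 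Substituting the vanishing of $G$ outside $Q\ev{D}$ collapses the inversion to a sum over minimal elements, giving the required identity; chaining the three displays then yields $s_D\ev{1-t}=-\sum_{a\in Q\ev{D}}\ev{-1}^{|a|}\chi_{L\ev{a}}\ev{t}$.

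I expect the main obstacle to be the M\"{o}bius and rank bookkeeping in the second step: one must check carefully that, under the isomorphism of \Cref{lem:up_set_isom_conn}, the drop in rank is exactly the drop in the number of parts, and that $\mu_{T\ev{D}}\ev{a,\cdot}$ restricted to $\upset[T\ev{D}]{a}$ agrees with the M\"{o}bius function of $L\ev{a}$ — the latter holding because the interval $[a,b]$ is the same whether computed in $T\ev{D}$ or in the up-set, which is the content of \Cref{lem:mu_up_set}. A secondary point worth flagging is that \Cref{thm:mobius_inv_cycles} is stated only for $G\ev{\mymathbb{1}_D}$, so I would note explicitly that its proof in fact establishes $G\ev{b}=0$ for every $b\in T\ev{D}\setminus Q\ev{D}$, which is exactly what the argument above uses.
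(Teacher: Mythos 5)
Your proof is correct, but it takes a genuinely different route from the paper's. The paper's argument re-runs the combinatorial chain of \Cref{sec:partition_lattice_eulerian_digraph} directly: it writes $f_k\ev{D}$ as a sum over $b\in T\ev{D}$ of products of circuit counts, converts each $|\mathfrak{C}\ev{D_i}|$ into a count of Eulerian trails ending at a fixed edge, splits those by cycle sequence (\Cref{rmk:theta_distinction}), and then uses \Cref{lem:pyramids_and_walks_containing_edge}, \Cref{lem:balanced_pieces_mobius}, \Cref{lem:heap_lattice_product_poset_isom} and \Cref{lem:mu_down_set} to turn the resulting pyramid counts into the M\"{o}bius values $\mu_{L\ev{a}}\ev{\mymathbb{0}_{a},b}$ appearing in $\chi_{L\ev{a}}$. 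You bypass the walk/pyramid machinery entirely at this stage: after reducing both sides to $-\sum_{b} t^{|b|-1}(\cdots)$, everything hinges on the single identity
$$
F\ev{b} \;=\; \sum_{\substack{a\in Q\ev{D}\\ a\leq b}} \ev{-1}^{|a|}\,\mu_{T\ev{D}}\ev{a,b},
$$
which you obtain by M\"{o}bius inversion of $G\ev{b}=\sum_{c\leq b}F\ev{c}$ together with the vanishing of $G$ off $Q\ev{D}$. In effect you show that the proposition is a polynomial refinement of \Cref{thm:mobius_inv_cycles}, formally equivalent to ``$G\ev{b}=0$ for all non-minimal $b$'' once \Cref{lem:up_set_isom_conn} and \Cref{lem:mu_up_set} are in place. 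This is shorter and makes the logical dependence transparent; the price is exactly the point you flag, namely that you must extract from the proof of \Cref{thm:mobius_inv_cycles} the stronger assertion $G\ev{b}=0$ for every $b\in T\ev{D}\setminus Q\ev{D}$, not just $b=\mymathbb{1}_D$ --- which its proof does establish via the factorization $G\ev{b}=\prod_i G\ev{\mymathbb{1}_{B_i}}$ and induction on $|E\ev{D}|$ (and the case where $D$ is itself a cycle is vacuous, since then $T\ev{D}=Q\ev{D}$). The paper's proof, by contrast, does not invoke \Cref{thm:mobius_inv_cycles} at all and retains a direct combinatorial interpretation of each coefficient as a count of walks with prescribed cycle sequence. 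Your rank bookkeeping is also sound: $G_a$ is connected because $D$ is, so $\textup{rk}\ev{L\ev{a}}=|a|-1$ and the image of $b$ under the isomorphism of \Cref{lem:up_set_isom_conn} has $|b|$ blocks, hence rank $|a|-|b|$, giving the exponent $|b|-1$.
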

\begin{proof}
For each $b = \{ D_1,\ldots,D_k \}\in T\ev{D}$, we define maps $\phi$ and $\psi$ between $\{ a\in Q\ev{D} : a\leq_{T\ev{D}} b \}$ and $Q\ev{D_1}\times \cdots \times Q\ev{D_k}$ which are inverses of each other (compare with \Cref{lem:up_set_isom_conn}):
\begin{align*}
\phi & : a \mapsto \ev{ a_1,\ldots, a_k } \text{ where $ a_i = \{ \beta \in a: E\ev{\beta} \subseteq E\ev{D_i} \} $ }  \\
\psi & : \ev{c_1,\ldots,c_k} \mapsto c_1 \sqcup \ldots \sqcup c_k 
\end{align*}
Let $\triangleleft$ be any linear order on $E\ev{D}$. For each $B\subseteq E\ev{D}$, let $e_B$ be the minimal edge with respect to $\triangleleft$. Furthermore, for each Eulerian $B\subseteq E\ev{D}$ and each partition $c\in Q\ev{B}$, let $\beta\ev{c}$ be the unique cycle of $c$ containing $e_B$. Then,  
\begin{align*}
& s_D\ev{1-t} = \sum_{k\geq 1} \ev{-t}^{k-1} f_k\ev{D}   = - \sum_{k\geq 1} \ev{-t}^{k} \sum_{b = \{ D_1,\ldots,D_k \}\in T\ev{D}} \lav \mathfrak{C}\ev{D_1} \times \cdots \times \mathfrak{C}\ev{D_k} \rav \\ 
& = \sum_{k\geq 1}  \ev{-t}^{k-1}  \sum_{b = \{ D_1,\ldots,D_k \}\in T\ev{D}} \lav \mathcal{W}^{e_{D_1}} \ev{D_1}  \times \cdots \times \mathcal{W}^{e_{D_k}} \ev{D_k} \rav \\ 
& = \sum_{k\geq 1}  \ev{-t}^{k-1}  \sum_{b = \{ D_1,\ldots,D_k \}\in T\ev{D}} \sum_{\ev{c_1,\ldots,c_k} \in Q\ev{D_1}\times \cdots \times Q\ev{D_k}} \lav \mathcal{W}^{e_{D_1}}_{c_1} \ev{D_1} \times \cdots \times \mathcal{W}^{e_{D_k}}_{c_k} \ev{D_k} \rav \\
& \hspace{2cm}\text{ by \Cref{rmk:theta_distinction} } \\ 
& = \sum_{k\geq 1}  \ev{-t}^{k-1}  \sum_{b = \{ D_1,\ldots,D_k \}\in T\ev{D}} \sum_{ \substack{a\in Q\ev{D} \\  b \in \upset[T\ev{D}]{a} } } \lav  \mathcal{W}^{e_{D_1}}_{a_1} \ev{D_1} \times \cdots \times \mathcal{W}^{e_{D_k}}_{a_k} \ev{D_k} \rav \\
& = \sum_{k\geq 1}  \ev{-t}^{k-1}  \sum_{ a\in Q\ev{D}  }  \sum_{  b = \{ D_1,\ldots,D_k \}\in \upset[T\ev{D}]{a} } \lav  \mathcal{W}^{e_{D_1}}_{a_1} \ev{D_1} \times \cdots \times \mathcal{W}^{e_{D_k}}_{a_k} \ev{D_k} \rav \\
& = \sum_{k\geq 1} \ev{-t}^{k-1} \sum_{ a\in Q\ev{D}  } \sum_{  b = \{B_1,\ldots,B_k\} \in L\ev{a} }  \ev{-1}^{1-|B_1|}\mu_{L\ev{B_1}}\ev{ \mymathbb{0}_{B_1}, \mymathbb{1}_{B_1} } \cdots  \ev{-1}^{1-|B_k|}\mu_{L\ev{B_k}}\ev{ \mymathbb{0}_{B_k}, \mymathbb{1}_{B_k} }    \\
&\hspace{2cm} \text{by \Cref{lem:balanced_pieces_mobius}, \Cref{lem:up_set_isom_conn} and \Cref{lem:pyramids_and_walks_containing_edge}} \\ 
& = \sum_{ a\in Q\ev{D}  } \sum_{k\geq 1}  \ev{-t}^{k-1}  \sum_{  b = \{B_1,\ldots,B_k\} \in L\ev{a} }  \ev{-1}^{k+|B_1|+\ldots+|B_k|}\mu_{ \downset[L\ev{a}]{b}}\ev{ \mymathbb{0}_{a} , b}  \\
&\hspace{2cm} \text{ by \Cref{lem:heap_lattice_product_poset_isom}, as $\mymathbb{0}_{B_1} \cup \ldots \cup \mymathbb{0}_{B_k} = \mymathbb{0}_{a} $, $\mymathbb{1}_{B_1} \cup \ldots \cup \mymathbb{1}_{B_k} = b$,} \\ 
& \hspace{2cm} \text{and the M\"{o}bius function is multiplicative over products of posets (c.f.~\cite[p.~344]{vanlint})} \\ 
& = -\sum_{ a\in Q\ev{D}  }  \sum_{  b \in L\ev{a} } t^{|b|-1}   \ev{-1}^{|a| } \mu_{L\ev{a}}\ev{ \mymathbb{0}_{a}, b}  \\
& \hspace{2cm} \text{by \Cref{lem:mu_down_set}}\\
&  = - \sum_{ a\in Q\ev{D}  }  \sum_{  b \in L\ev{a} } \ev{-1}^{|a|} t^{\textup{rk}\ev{L\ev{a}} - \textup{rk}\ev{b}} \mu\ev{ \mymathbb{0}_{a}, b} =  - \sum_{ a\in Q\ev{D}  } \ev{-1}^{|a|} \chi_{L\ev{a}}\ev{t} 
\end{align*}
where $\textup{rk}$ is the rank function of the bond lattice $L\ev{a}$ (q.v.~\Cref{def:posets_prelim}).
\end{proof}
It is well-known (\cite[p.~162]{stanley_enumerative}) that $t\cdot \chi_{L\ev{a}}\ev{t} = P_{ G_a }\ev{t}$, where $P_{ G_a }\ev{t}$ is the chromatic polynomial of the intersection graph $G_a$. It follows that 
$$
r_D\ev{-t} = \ev{-t} \cdot s_D\ev{1-t} = \sum_{a\in Q\ev{D}} \ev{-1}^{|a|} P_{ G_a }\ev{t}
$$
\begin{example}
Let $D$ be the digraph from \Cref{ex:digraph_TD}. Then, the partitions into cycles are
$$
a_1 = \{ \beta_1,\beta_2,\beta_3,\beta_4\} \qquad \text{ and } \qquad a_2 = \{ \beta_4, \beta_5,\beta_6\}
$$
It follows that
\begin{align*}
& s_{D}\ev{t} = 6 + 11 \ev{ t-1 } + 6 \ev{t-1}^{2} + \ev{t-1}^{3} \\
& \chi_{L\ev{a_1}}\ev{t} = -4 +8t -5t^2 + t^3 \\ 
& \chi_{L\ev{a_2}}\ev{t} = 2 - 3t + t^2 \\ 
& - s_{D}\ev{1-t} = -\ev{ 6 - 11 t + 6 t^{2} - t^{3} } = \ev{-1}^{4}\ev{ -4 +8t -5t^2 + t^3 } + \ev{-1}^{3} \ev{ 2 - 3t + t^2 }
\end{align*}

\end{example}

For more results on the Martin polynomial, we refer the reader to \cite{martin_remarkable,jaeger, monaghan_new_results, bollobas,lasvergnas, martin}. For connections between the Martin polynomial and the Tutte polynomial, c.f.~\cite{monaghan}.

We end this part with a question: The Martin polynomial of a multi-graph is defined in \cite{lasvergnas}. Is there an identity similar to that of \Cref{prop:hopefully_original} which relates the Martin polynomial of a multi-graph $X$ to the chromatic polynomials of the intersection graphs of the partitions of $X$ into cycles?

\section{Deduction of the Harary-Sachs Theorem for Graphs} 
\label{sec:deduction}
\subsection{Preliminaries on Infragraphs}
We start with preliminaries on the characteristic polynomial of a hypergraph. Let $\mathcal{H}$ be a simple hypergraph, of rank $k$ (every edge consisting of $k\geq 2$ vertices) and order $|V\ev{\mathcal{H}} | = n$. The (normalized) \textit{adjacency hypermatrix} $\mathbb{A}_\mathcal{H} = \ev{a_{i_1 \ldots i_k}}$ of $\mathcal{H}$ is the $n$-dimensional symmetric $k$-hypermatrix, where $a_{i_1,\ldots,i_k} = 1/(k-1)!$ if $\{i_1,\ldots,i_k\}\in E\ev{\mathcal{H}}$, and $0$ otherwise. The \textit{Lagrangian} of a hypermatrix $\mathbb{A} = \ev{ a_{i_1,\ldots,i_k} }$ is the $n$-variate polynomial
$$
F_\mathbb{A}\ev{\textbf{x}} = \sum_{i_1 \ldots i_k = 1 }^{n} a_{i_1 \ldots i_k} x_{i_1} \cdots x_{i_k}
$$
The \textit{characteristic polynomial} of $\mathcal{H}$, denoted $\chi_{\mathcal{H}}\ev{t}$, is the $n$-variate resultant of 
$$
\nabla \ev{ t\cdot F_\mathcal{I}\ev{\textbf{x}}  - F_\mathcal{H}\ev{\textbf{x}}  }
$$
where $\nabla: \C[x_1,\ldots,x_n] \rightarrow \prod_{j=1}^{n} \C[x_1,\ldots,x_n]$ is the $n$-variate gradient and $\mathcal{I}$ is the order-$k$ and dimension-$n$ identity hypermatrix, with ones on the diagonal and zeroes elsewhere (cf.~\cite{cd1,cc1,cc2,co2}).

Given a multi-hypergraph $X$, then we may obtain a simple hypergraph $\underline{X}$, called the \textit{flattening} of $X$, by removing duplicate edges. More formally,

\begin{definition}
\label{def:flattening}
\phantom{a}

Given a multi-hypergraph $X = \ev{ V, E, \varphi: E\rightarrow \binom{V}{k} }$, then the \textit{flattening} $\underline{X}$ of $X$ is the simple hypergraph $\underline{X} = \ev{ V, E /\unsim, \widetilde{ \varphi } : E/ \unsim \rightarrow \binom{V}{k} }$, obtained by taking the quotient of the edge-set, under the equivalence relation of parallelism (q.v.~\Cref{def:multi_hyper_graph}), where $\widetilde{\varphi}([e]_{\unsim}) = \varphi(e)$ (which is well-defined, since $\varphi(e)=\varphi(f)$ for all $f \in [e]_{\unsim}$). 
\end{definition}

\begin{definition}[Vertex-fixing isomorphisms of multi-graphs and digraphs]
\label{def:approx}
Given two undirected multi-hypergraphs $X = \ev{ V, E, \varphi}$ and $X' = \ev{V', E', \varphi'}$, then we write $X \approx X'$ if $V = V'$ and $ m_X\ev{ \{  v_1,\ldots,v_k \} } = m_{X'}\ev{ \{ v_1,\ldots,v_k \} }$ for each $v_1,\ldots,v_k\in V = V'$ (q.v.~\Cref{def:multi_hyper_graph}). Equivalently, define $X = \ev{V,E,\phi} \approx X' = \ev{ V', E', \phi' }$ if and only if $ V = V'$ and there exists a bijection $\eta: E \rightarrow E'$, that preserves the parallelism relation $\unsim$: For each $e_1, e_2 \in E$, we have $e_1 \unsim_{X} e_2$ if and only if $\eta\ev{e_1} \unsim_{X'} \eta\ev{e_2}$. 

Similarly, for two digraphs $D = \ev{ V, E, \psi}$ and $D' = \ev{ V' , E', \psi'}$, we write $ D \approx D' $, provided $V = V'$ and $ m_D\ev{ u, v } = m_{D'}\ev{ u, v}$ for each $u,v\in V = V'$. Given any $\mathbf{O} \in \mathcal{O}\ev{X} / \approx$, we define $m_\mathbf{O}\ev{ u, v } := m_O\ev{u,v}$, for any $u,v \in V\ev{X} $ and any representative $O\in \mathbf{O}$. 


\end{definition}

In \cite{cc1,co2}, the characteristic polynomial $\chi_{\mathcal{H}}\ev{t}$ is expressed in terms of the counts of occurrences of ``Veblen'' hypergraphs in $\mathcal{H}$, which is defined next:

\begin{definition}

\label{def:veblen_multi_hypergraph}

\phantom{a}
Let $k\geq 2$ be fixed. 

\begin{enumerate}
\item[\textit{i)}] A multi-hypergraph $X$ is \textit{$k$-valent} if the degree of each vertex of $X$ is divisible by $k$. 
\item[\textit{ii)}] A $k$-valent multi-hypergraph of rank $k$ is called a \textit{Veblen hypergraph}. 
\end{enumerate}

For a Veblen hypergraph $X$, let $c\ev{X}$ be the number of connected components of $X$. Let $\mathcal{V}^{m}$ denote the set of isomorphism classes of Veblen hypergraphs with $m$ components, where $m\geq 1$. In particular, $\mathcal{V}^{1}$ is the set of isomorphism classes of connected Veblen hypergraphs. Let $\mathcal{V}^{\infty}$ be the set isomorphism classes of (possibly disconnected) Veblen hypergraphs. In other words, we define
$$ \mathcal{V}^{\infty} = \bigsqcup_{ m = 1 }^{\infty} \mathcal{V}^{m} $$

A $k$-valent multi-hypergraph $X$ is an \textit{infragraph} of a simple hypergraph $\mathcal{H}$, provided $\underline{X}$ is a subgraph of $\mathcal{H}$. Let $ \textup{Inf}^{m}\ev{\mathcal{H}}$ denote the set of infragraphs of $\mathcal{H}$ with $m\geq 1$ components, up to equivalence under $\approx$ (see \Cref{def:approx}). In other words, $\textup{Inf}^{m}\ev{\mathcal{H}}$ is the set of equivalence classes $[X]_{ \approx }$ of infragraphs $X$ with $m$ components. By an abuse of notation, we will refer to the elements $[X]_{\approx} \in \textup{Inf}^{m}\ev{\mathcal{H}}$ by $X$, omitting the brackets. In particular, $\textup{Inf}^{1}\ev{\mathcal{H}}$ is the set of isomorphism classes of connected infragraphs of $\mathcal{H}$. We define,
$$
\textup{Inf}\ev{\mathcal{H}} = \bigsqcup_{ m = 1 }^{\infty} \textup{Inf}^{m}\ev{\mathcal{H}}
$$
\end{definition}

A connected Veblen hypergraph $X$ is \textit{decomposable}, if it has a proper non-trivial Veblen subgraph $Y$. In this case, $E\ev{X}\setminus E\ev{Y}$ also induces a Veblen hypergraph. Inductively, we can obtain at least one decomposition of $X$ into indecomposable Veblen hypergraphs. However, the decomposition of $X$ into indecomposables is not necessarily unique. We define the set of partitions (decompositions) of a Veblen hypergraph $X$ into connected Veblen hypergraphs: 
\begin{definition}
\label{def:parallel_equiv}
Given a Veblen hypergraph $X = \ev{ V, E, \varphi} \in \mathcal{V} $, define
$$\mathcal{S}\ev{X} := \left\{ \{  A_1, \ldots, A_m \} : \emptyset \neq A_i \subseteq E \text{ and } A_i \in \mathcal{V}^{1} \text{ and } E = \bigsqcup_{i=1}^{m} A_i \text{ is a disjoint union}  \right\} $$
\end{definition}

Given the equivalence relation $\unsim$ on $E$ denoting parallelism of edges, we will define further equivalence relations $\unsim_1$ and $\unsim_2$ on $\mathcal{P}(E)$ and $\mathcal{P}(\mathcal{P}(E))$: 

\begin{definition}

\phantom{a}

\begin{enumerate}
\item[\textit{i)}] Let $\text{Sym}_{\unsim} \ev{ E } $ be the subgroup of $\text{Sym}\ev{E}$, consisting of permutations of $E$, preserving the parallelism relation $\unsim$:
$$ \text{Sym}_{\unsim} \ev{ E }  := \{ \sigma\in \text{Sym}\ev{E}: \sigma\ev{ e } \unsim e \text{ for each } e\in E \}$$
\item[\textit{ii)}] Denote by $\unsim_1$ an equivalence relation on the power set $\mathcal{P}\ev{E}$, as follows: Given $A,B \subseteq E$, 
$$ 
A \ \unsim_1 \ B \text{ if and only if } \exists \sigma \in \text{Sym}_{\unsim} \ev{ E } \text{ such that } \sigma\ev{A} = B
$$
Furthermore, for each $S = \{ A_1,\ldots, A_m \} \in \mathcal{S}\ev{X}$, we have $S\subseteq \mathcal{P}\ev{E}$ and so, the following cardinality is well-defined:
$$ 
\alpha_{ S } := \prod_{j=1}^{r} | [A_{i_j}]_{\unsim_1} | 
$$
where the distinct elements of $S/\unsim_1$ are $ [A_{i_1}]_{\unsim_1}, \ldots,  [A_{i_r}]_{\unsim_1} $ for some $1\leq r \leq m$.
\item[\textit{iii)}] Define an equivalence relation $\unsim_2$ on $\mathcal{S}\ev{X} \subseteq \mathcal{P}(\mathcal{P}(E))$, as follows: Given $S, T \in \mathcal{S}\ev{X}$, 
$$ 
S \unsim_2 T \text{ if and only if } \exists \sigma \in \text{Sym}_{\unsim} \ev{ E } \text{ so that } \sigma(S) = T
$$
\end{enumerate}
\end{definition}
\begin{remark}
More generally, given a set $E$ and a fixed equivalence relation $\unsim$ on $E$, we can define an equivalence relation $\unsim_i$ on the $i$th power set $\mathcal{P}^{i}\ev{E}$ of a set $E$, for $i\geq 1$: Let $\mathcal{P}^{0}\ev{E} = E$. For $i\geq 1$, let $\mathcal{P}^{i}\ev{E} = \mathcal{P}(\mathcal{P}^{i-1}\ev{E})$. Then, for each $A,B \in \mathcal{P}^{i}\ev{E}$, let $A \unsim_i B$ if and only if $\sigma \ev{ A } = B $ for some $\sigma \in \text{Sym}_{\unsim} \ev{ E }$. (We only need $\unsim_i$ for $i=1,2$, for present purposes.)
\end{remark}
Henceforth, by an abuse of notation, we will write $\unsim$ for each of the above equivalence relations. This does not generate any confusion, because the ground set of each equivalence class is different. We succinctly write $\widetilde{\mathcal{S}}\ev{X}$ instead of $\mathcal{S}\ev{X} / \unsim$. 

The product of factorials of the multiplicities of the edges of $X$ is denoted by,
$$
M_X = \prod_{ [e]_{\unsim} \in E\ev{X} / \unsim } m_X\ev{e} ! 
$$
For any $\mathbf{S} = \{ A_1, \ldots, A_m\} \in \mathcal{S}\ev{X}$, we put $ M_S = \prod_{i=1}^{m} M_{A_i} $. Also, for any $\mathbf{S}\in \widetilde{\mathcal{S}}\ev{X}$, we define $M_\mathbf{S} := M_S$ for any $S\in \mathbf{S}$, which is well-defined, independent of the choice of representative. Note that, for any $\mathbf{S}\in \widetilde{\mathcal{S}}\ev{ X }$, we have 
$$ 
| \mathbf{S} | = \frac{M_X}{ M_\mathbf{S} \alpha_{ \mathbf{S} } } 
$$
where $\alpha_{ \mathbf{S} } := \alpha_{ S }$, for any choice of representative $S\in \mathbf{S}$. 

We need the following definitions for a digraph $D$:
\begin{definition}
Let $D$ be a digraph and $u\in V\ev{D}$ be a fixed vertex. 
\begin{enumerate}
\item Define $N_D := \prod_{u\in V\ev{D}} \textup{deg}_{D}^{+}\ev{u} !$ (q.v.~\Cref{sec:martin_poly})
\item Let $K_D\ev{u} := \prod_{v\in V\ev{D}} m_D\ev{u,v} ! $ and $K_D := \prod_{u\in V\ev{D}} K_D\ev{u}$
\end{enumerate}
\end{definition}

\begin{remark}
\label{rmk:approx_remark}
Let $X=\ev{ V\ev{X}, E\ev{X}, \phi}$ be a connected multi-graph of rank $2$ and let $O \in \mathcal{O}\ev{X} $ be an orientation.
\begin{enumerate}
\item For an edge $e\in E\ev{X}$ with $\phi\ev{e} = \{ u,v\}$, we have $m_X\ev{e} = m_O\ev{u,v} + m_O\ev{v,u}$. 
\item 
$$ 
| [O]_{\approx} | =  \prod_{ \{u,v\} \in \binom{V(X)}{2} } \dfrac{ m_X\ev{ \{ u, v\} }! }{  m_{ O } \ev{ u, v }!  m_{ O } \ev{ v, u }!  }  = \dfrac{ \prod_{ \{u,v\} \in \binom{V(X)}{2} } m_X\ev{ \{ u, v\} }  }{ \prod_{u \in V\ev{D}} \prod_{v \in V\ev{D}} m_{ O } \ev{ u, v }! } = \dfrac{ M_X }{ K_O }
$$
\end{enumerate}
\end{remark}

For the expression of the Harary-Sachs Theorem for hypergraphs, we will need the definition of a ``rooting".
\begin{definition}
\label{def:rooting}
Let $X = \ev{ [n], E, \varphi }$ be a Veblen hypergraph. Given an edge $e\in E\ev{X}$, for each $u\in e$, we define the \textit{$u$-rooted directed star of $e$}, denoted $S_u\ev{ e }$, as the digraph $T$ with $V\ev{T} = \varphi(e)$ and $E\ev{T} = \{ \ev{ u,v} : v\in \varphi(e) \text{ and } v\neq u\}$. Given $X\in \mathcal{V}^{\infty}_d$, a \textit{rooting} is a $d$-tuple of stars $\ev{ S_{v_1}\ev{ e_1 },\ldots, S_{v_d}\ev{ e_d } }$ such that
\begin{itemize}
\item[\textit{i)}] $(\forall i \in [d-1]) (v_i\leq v_{i+1})$ 
\item[\textit{ii)}] $e_1,\ldots,e_d$ are pairwise distinct and $E = \{ e_1,\ldots,e_d\}$. 
\item[\textit{iii)}] $[n]=\{v_1,\ldots,v_d\}$
\item[\textit{iv)}] $(\forall i \in [d]) (v_i \in e_i)$
\end{itemize}
\end{definition}

From each rooting $R = \ev{ S_{v_1}\ev{ e_1 },\ldots, S_{v_d}\ev{ e_d } }$ of Veblen hypergraph $X$, we obtain a digraph by taking the union of the edges of stars, adding multiplicities when two edges are parallel. Formally, let $D_R$ be the digraph with $V(D_R) = V(X)$, $E(D_R) = \{(v,w,j): v=v_j, w \in e_j \setminus \{v\}\}$, and $\psi(v,w,j)=(v,w)$ for each $v,w \in [n]$. We say that $R$ is an \textit{Eulerian rooting} if $D_R$ is Eulerian. Let $ \mathfrak{R}\ev{X} $ be the set of Eulerian rootings of $X$. For a rooting $R\in \mathfrak{R}\ev{X} $, let $i$ be chosen as the root of $r_i$-many stars, $S_{i}\ev{e_{i1}},\ldots,S_{i}\ev{e_{ir_i}}$. Then, the rooting $R$ has the form 
$$
\ev{S_{1}\ev{e_{11}},\ldots,S_{1}\ev{e_{1r_1}},\ldots, S_{n}\ev{e_{n1}},\ldots,S_{n}\ev{e_{nr_n}}} 
$$
By the definition of a rooting, it follows that for each pair of parallel edges $e,e'$ and each vertex $i\in [n]$, we have $S_{i}\ev{e} = S_{i}\ev{e'}$. We will define an equivalence relation on the set of rootings, where two rootings are considered equivalent, if one is obtained from the other by interchanging two stars with the same root (and possibly non-parallel edges).

\begin{definition}
\label{def:equiv}
Let $R=\ev{S_{v_1}\ev{e_1},\ldots,S_{v_d}\ev{e_d}}$ be any rooting. Define an equivalence relation $\equiv$ on $\mathfrak{R}\ev{ X }$, as the transitive closure of the relations,
\begin{align*}
\ev{S_{v_1}\ev{e_1},\ldots, S_{v_i}\ev{e_i}, S_{v_{i+1}}\ev{e_{i+1}} \ldots ,S_{v_d}\ev{e_d}} & \equiv \ev{S_{v_1}\ev{e_1},\ldots, S_{v_{i+1}}\ev{e_{i+1}},  S_{v_i}\ev{e_i}, \ldots ,S_{v_d}\ev{e_d}} 
\end{align*}
where $v_i = v_{i+1}$ and $1 \leq i \leq d-1$.
\end{definition}

Let $\mathfrak{R}\ev{X} / \!\equiv$ be the set of equivalence classes of rootings of $X$ under $\equiv$. Instead of $\mathfrak{R}\ev{X}/ \!\equiv$, we will succinctly write $ \overline{\mathfrak{R}}\ev{X}$. Note that $R\equiv R'$ implies $D_R \approx D_{R'}$. Hence, the following are well-defined, independent of the choice of a representative:
$$
D_\mathbf{R} := [D_R]_{\approx} \text{ and } \mathfrak{C}\ev{ D_\mathbf{R} } := \mathfrak{C}\ev{ D_R } \text{ for any $R \in \mathbf{R}$.}
$$

\begin{remark}
\label{rmk:rootings_and_orientations}
In the special case of a Veblen multi-graph $X$ of rank $k=2$, 
\begin{enumerate}
\item $ |\mathbf{R}| = \dfrac{ \prod_{u\in V\ev{D_\mathbf{R}} } \textup{deg}^{+}\ev{u}! }{ \prod_{u\in V\ev{D_\mathbf{R}} } \prod_{v\in V\ev{D_\mathbf{R}} } m\ev{u,v}! } = \dfrac{N_{D_{\mathbf{R}}}}{ K_{D_\mathbf{R}}}$
\item It is easy to see that $R\equiv R'$ if and only if $D_R \approx D_{R'}$, for two Eulerian rootings $R, R' \in \mathfrak{R}\ev{X}$. (This is not necessarily true for rank $k\geq 3$.) In other words, choosing $\mathbf{R} \in \overline{\mathfrak{R}}\ev{ X }$ is equivalent to choosing an Eulerian orientation of $X$, up to $\approx$. Let $ \widehat{\mathcal{O}}\ev{X} := \{ O \in \mathcal{O}\ev{X} :  | \mathfrak{C}\ev{ O } | \geq 1  \} $ be the set of Eulerian orientations of $X$. Then, we have a natural bijection $ \{ D_\mathbf{R}: \mathbf{R} \in \overline{\mathfrak{R}}\ev{X} \} \leftrightarrow \widehat{\mathcal{O}}\ev{X} / \approx$. 
\end{enumerate}
\end{remark}

We are ready to state the Harary-Sachs Theorem for hypergraphs. One version of this theorem is found in \cite{co2}:
\begin{theorem}
\label{thm:harary_sachs_hyper}
Let $\mathcal{H}$ be a hypergraph, of order $k\geq 2$ and dimension $|V\ev{\mathcal{H}}| = n$. Let $N := \textup{deg}\ev{\chi_\mathcal{H}\ev{t}}$ be the degree of the characteristic polynomial. Then, we have 
$$
t^{-N} \cdot \chi_\mathcal{H}\ev{t} = \sum_{X\in \textup{Inf}\ev{\mathcal{H}} } \ev{-1}^{c\ev{X}} t^{- |E\ev{X}|  }  w_n\ev{X}
$$
where: 
\begin{enumerate}
\item[(i)] $c\ev{X}$ is the number of connected components of $X$.
\item[(ii)] $w_n\ev{X} = - \sum_{ \mathbf{S} \in \widetilde{\mathcal{S}} \ev{ X }} \ev{ -\ev{k-1}^{n} }^{ c\ev{ \mathbf{S} } } \dfrac{1}{\alpha_{\mathbf{S}}} C_{ \mathbf{S} }$
\item[(iii)] $C_\mathbf{S}$ denotes the \textit{associated coefficient}, defined in \cite{cd1, cc1} and extended multiplicatively to partitions of a Veblen hypergraph. In particular, for a connected Veblen hypergraph $G$, we define:
$$
C_G = \sum_{\mathbf{R} \in  \overline{\mathfrak{R}}\ev{G} } | \mathbf{R} | \cdot \dfrac{ | \mathfrak{C}\ev{D_\mathbf{R}} | }{ N_{D_{\mathbf{R}}} } 
$$
For a partition $S = \{A_1,\ldots,A_m\} \in \mathcal{S}\ev{X}$ of a Veblen hypergraph $X$, we define $ C_{ S } = \prod_{i=1}^{m} C_{A_{i}} $. Given $\mathbf{S} \in \widetilde{\mathcal{S}}\ev{X}$, then we define $C_{\mathbf{S}} := C_S$ where $S\in \mathbf{S}$ is any chosen representative.
\end{enumerate}
\end{theorem}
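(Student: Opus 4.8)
Since this restates a known result, the plan is to re-derive it from the definition of $\chi_{\mathcal H}(t)$ as the resultant of $\nabla\bigl(t\,F_{\mathcal I}(\mathbf x)-F_{\mathcal H}(\mathbf x)\bigr)$, using the multivariate resultant expansion employed in \cite{cd1,cc1,co2} (a ``loop/trace'' form of the Morozov--Shakirov formula; cf.~\cite{usingalg}). Writing the $i$-th coordinate of the gradient as $k\,t\,x_i^{k-1}-\partial_i F_{\mathcal H}(\mathbf x)$, one has a ``diagonal-plus-perturbation'' system of $n$ forms of degree $k-1$, and the resultant expansion expresses $\chi_{\mathcal H}(t)$ as $t^{N}$ times $\exp$ of a series whose $d$-th term is a constant multiple of the trace of the $d$-fold contraction of $\mathbb A_{\mathcal H}$ against the inverse of the identity part $F_{\mathcal I}$. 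I would treat this resultant expansion as the one analytic input and devote the rest of the argument to unwinding the combinatorics of those traces.

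Step two is to expand the $d$-th trace term as a sum indexed by length-$d$ cyclic sequences of rooted directed stars $S_{v_1}(e_1),\dots,S_{v_d}(e_d)$: contracting hypermatrix entries along the sequence forces each edge $e_j$ to be entered at its root $v_j$, so the data of a term is exactly a rooting (\Cref{def:rooting}), while the diagonal factors $(k\,t\,x_i^{k-1})^{-1}$ force, after balancing variables, every vertex to occur as a root as often as it occurs $(k-1)$ times as a leaf --- i.e.\ the $k$-valent (Veblen) condition together with the requirement that $D_R$ be Eulerian (\Cref{rmk:rootings_and_orientations}). After collecting $t$-powers, the term indexed by an Eulerian rooting $R$ evaluates to $\pm\,t^{-|E(D_R)|}\,|\mathfrak C(D_R)|/N_{D_R}$ times a power of $(k-1)$, where $|\mathfrak C(D_R)|$ counts the cyclic readings of the closed walk traced out and $1/N_{D_R}$ comes from the $1/d$ in the logarithm together with the out-degree factorials; passing to $\equiv$-classes (\Cref{def:equiv}) contributes the factor $|\mathbf R|$, and summing over isomorphism classes of connected Veblen hypergraphs $G$ assembles exactly $C_G=\sum_{\mathbf R\in\overline{\mathfrak R}(G)}|\mathbf R|\,|\mathfrak C(D_{\mathbf R})|/N_{D_{\mathbf R}}$.

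Step three is exponentiation. By the exponential formula, $\exp$ of the sum over connected Veblen infragraphs becomes a sum over all (possibly disconnected) Veblen infragraphs $X$, each contributing a product over the blocks of a set partition of $E(X)$ into connected Veblen sub-hypergraphs, i.e.\ over $\mathcal S(X)$; passing from this ordered bookkeeping to the unordered classes $\widetilde{\mathcal S}(X)$ introduces precisely the correction factor $\alpha_{\mathbf S}$ via $|\mathbf S|=M_X/(M_{\mathbf S}\alpha_{\mathbf S})$. The minus sign in the resultant exponential produces one sign per connected component of $X$, giving the global $(-1)^{c(X)}$, while the constant attached to each connected block produces the factor $(-(k-1)^{n})^{c(\mathbf S)}$; collecting these yields $w_n(X)=-\sum_{\mathbf S\in\widetilde{\mathcal S}(X)}(-(k-1)^{n})^{c(\mathbf S)}\alpha_{\mathbf S}^{-1}C_{\mathbf S}$. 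Since a hypermatrix entry for an edge outside $\mathcal H$ vanishes, the sum is supported on $\operatorname{Inf}(\mathcal H)$; since each term carries $t^{-|E(X)|}$, only infragraphs with at most $N$ edges affect $\chi_{\mathcal H}(t)$, and as there are finitely many Veblen hypergraphs with any fixed number of edges the identity is, coefficient by coefficient, finite, giving the stated formula for $t^{-N}\chi_{\mathcal H}(t)$.

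The step I expect to be the main obstacle is the exact reconciliation of constants and signs: the out-degree factorials in $N_{D_R}$, the edge-multiplicity factorials in $M_X$, the overcounting factor $\alpha_{\mathbf S}$, the $1/d$ from the logarithm, and the exact exponents of $t$ and of $(k-1)$ attached to each contraction all interact, and a single off-by-one propagates unforgivingly through the exponential formula. One must in particular keep separate the two sources of minus signs --- one ``per connected component'' producing $(-1)^{c(X)}$, one ``per connected block of the decomposition'' producing the $(-1)^{c(\mathbf S)}$ hidden inside $(-(k-1)^{n})^{c(\mathbf S)}$. A secondary point is to justify that the resultant series is an identity of polynomials in $t$, not merely of formal series in $t^{-1}$, which follows once $\deg\chi_{\mathcal H}=N$ is known.
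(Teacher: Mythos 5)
First, note that the paper itself does not prove \Cref{thm:harary_sachs_hyper}: the theorem is imported from \cite{co2} (see also \cite{cd1,cc1}), and the sentence immediately preceding the statement says exactly that. So there is no in-paper proof to measure your proposal against; the only fair comparison is with the argument in the cited sources. Your outline does reproduce the strategy used there: a Morozov--Shakirov-type trace expansion of the resultant of $\nabla\ev{t F_{\mathcal I} - F_{\mathcal H}}$, the combinatorial interpretation of the $d$-th trace term as a sum over Eulerian rootings of Veblen hypergraphs with $d$ edges, and the exponential formula converting the connected generating function into a sum over all infragraphs, with the decomposition data $\widetilde{\mathcal S}\ev{X}$ and the overcounting correction $\alpha_{\mathbf S}$ entering exactly where you place them. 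That is the right skeleton.

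As a proof, however, the proposal has a genuine gap, and it is the one you yourself flag: everything that makes the statement \emph{this} statement --- the factor $\ev{-\ev{k-1}^{n}}^{c\ev{\mathbf S}}$, the weight $|\mathbf R|\cdot|\mathfrak C\ev{D_{\mathbf R}}|/N_{D_{\mathbf R}}$, the leading minus sign in $w_n$, and the separation of the global $\ev{-1}^{c\ev{X}}$ from the signs hidden inside $w_n\ev{X}$ --- lives entirely in the constants you defer. One concrete symptom that the bookkeeping is not yet under control: you write that the term indexed by a rooting $R$ carries $t^{-|E\ev{D_R}|}$, but each star contributes $k-1$ arcs, so $|E\ev{D_R}| = \ev{k-1}\,|E\ev{X}|$, whereas the theorem requires $t^{-|E\ev{X}|}$; these agree only for $k=2$. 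The other point you raise --- that the trace expansion is a priori an identity of formal power series in $t^{-1}$ rather than of polynomials --- is also a real issue and needs the codegree argument from \cite{cc1,co2} (only infragraphs with at most $N$ edges can contribute to the polynomial part, and there are finitely many per codegree) to be made rigorous. In short: the approach is faithful to the proof in the cited references, but the quantitative core that pins down $w_n\ev{X}$ is missing rather than merely tedious, so the proposal as written does not yet establish the stated formula.
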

\begin{remark}
The weight function in \cite{co2} incorporates the factor $t^{-|E\ev{X}|}$, whereas the way it is defined above does not. This is inconsequential for our present purposes.
\end{remark}

\subsection{Deduction of Harary-Sachs Theorem for 2-Graphs}
As shown in \cite{harary} by Harary and in \cite{sachs} by Sachs, the coefficients of a simple graph $\mathcal{G}$ can be calculated in terms of the counts of elementary graphs in $\mathcal{G}$:
\begin{theorem}[Harary-Sachs Theorem for graphs of rank $2$]
\label{thm:harary_sachs_ordinary}
Let $\mathcal{G}$ be a simple graph. Call a Veblen hypergraph $X \in \mathcal{V}^{\infty}$ \textup{elementary} if it is a disjoint union of edges and cycles. Let $\mathcal{E}$ be the set of isomorphism classes of elementary Veblen hypergraphs. Then, we have the following formula for the characteristic polynomial $\chi_\mathcal{G} \ev{ t }$ (note that $\textup{deg} \ev{\chi_\mathcal{G}\ev{t}} = |V\ev{\mathcal{G}}|$):
$$t^{-|V\ev{\mathcal{G}}|} \cdot \chi_\mathcal{G} \ev{ t } = \sum_{ X \in \mathcal{E} }  \ev{-1}^{c\ev{X}} 2^{z\ev{X}} \genfrac{\lbrack}{\rbrack}{0pt}{0}{\mathcal{G}}{X} $$
where
\begin{enumerate}
\item $c\ev{X}$ is the number of components of an elementary graph $X$
\item $z\ev{X}$ is the number of cycles in $X$ of length $\geq 3$
\item $\genfrac{\lbrack}{\rbrack}{0pt}{0}{\mathcal{G}}{X} $ is the number of isomorphic copies of $X$ in $G$
\end{enumerate} 
\end{theorem}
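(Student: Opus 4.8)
The plan is to apply \Cref{thm:harary_sachs_hyper} with $k = 2$ and $\mathcal{H} = \mathcal{G}$. Then $N = n := |V(\mathcal{G})|$ and $(k-1)^n = 1$, so the theorem reads
\[
t^{-n}\chi_\mathcal{G}(t) \;=\; \sum_{X\in\textup{Inf}(\mathcal{G})}(-1)^{c(X)}\,t^{-|E(X)|}\,w_n(X),
\qquad
w_n(X) \;=\; -\!\!\!\sum_{\mathbf{S}\in\widetilde{\mathcal{S}}(X)}\!\!(-1)^{c(\mathbf{S})}\,\frac{1}{\alpha_{\mathbf{S}}}\,C_{\mathbf{S}}.
\]
An infragraph of a simple graph $\mathcal{G}$ is exactly an even-degree sub-multigraph $X$ of $\mathcal{G}$, i.e.\ a rank-$2$ Veblen multigraph with $\underline{X}\subseteq\mathcal{G}$; by Veblen's theorem every connected infragraph is an edge-disjoint union of cycles, and the indecomposable connected ones are precisely the simple cycles of length $\geq 3$ and the ``digons'' (two parallel copies of an edge of $\mathcal{G}$). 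Because the parts of a decomposition $\mathbf{S}$ are connected, each of $\mathcal{S}(X)$, $M_X$, $\alpha_{\mathbf{S}}$ and $C_{\mathbf{S}}$ factors over the connected components of $X$, so $w_n(X)$ is the product of the per-component quantities up to a global sign depending only on $c(X)$; in particular $w_n(X) = 0$ as soon as some component contributes $0$. Thus the theorem reduces to two assertions about a connected rank-$2$ Veblen multigraph $Z$: \textbf{(a)} if $Z$ is \emph{decomposable}, then $w_n(Z) = 0$; and \textbf{(b)} $w_n(Z) = 1$ when $Z$ is a digon and $w_n(Z) = 2$ when $Z$ is a simple cycle of length $\geq 3$. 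Granting (a) and (b), only the elementary infragraphs survive in the sum; collecting the labelled copies of a fixed isomorphism type into the multiplicity $\genfrac{\lbrack}{\rbrack}{0pt}{0}{\mathcal{G}}{X}$, using $\prod_{\text{components}}w_n = 2^{z(X)}$, and noting $|E(X)| = |V(X)|$ for elementary $X$, one recovers \Cref{thm:harary_sachs_ordinary}.

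The main content is (a), which I would deduce from \Cref{cor:indecomp_zero}. For a connected rank-$2$ Veblen multigraph $Z$, first unwind the associated coefficient: by \Cref{rmk:rootings_and_orientations} a class $\mathbf{R}\in\overline{\mathfrak{R}}(Z)$ carries the same information as an Eulerian orientation of $Z$ up to $\approx$, with $|\mathbf{R}| = N_{D_{\mathbf{R}}}/K_{D_{\mathbf{R}}}$, and by \Cref{rmk:approx_remark} the $\approx$-class of an orientation $O$ has $M_Z/K_O$ elements; combining these with the identity $|\mathbf{S}| = M_Z/(M_{\mathbf{S}}\alpha_{\mathbf{S}})$ (which lets one replace the sum over $\widetilde{\mathcal{S}}(Z)$ by a sum over $\mathcal{S}(Z)$), one obtains
\[
M_Z\,C_Z \;=\; \sum_{O\in\widehat{\mathcal{O}}(Z)} f_1(O)
\quad\text{and}\quad
w_n(Z) \;=\; -\frac{1}{M_Z}\!\!\sum_{\{A_1,\dots,A_m\}\in\mathcal{S}(Z)}\!\!(-1)^m\prod_{i=1}^{m}\ \sum_{O_i\in\widehat{\mathcal{O}}(A_i)}\! f_1(O_i).
\]
The key step is a change in the order of summation: a partition of $E(Z)$ into connected Veblen pieces together with an Eulerian orientation of each piece is exactly the same datum as an orientation $O$ of $Z$ together with a partition $b\in T(O)$ of $E(O)$ into connected Eulerian sub-digraphs --- and such a $b$ forces $O$ to be balanced, hence Eulerian since $Z$ is connected, so only $O\in\widehat{\mathcal{O}}(Z)$ contribute. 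Recalling $F(b) = (-1)^{|b|}\prod_{B\in b}|\mathfrak{C}(B)|$ from \Cref{def:number_cycles} and $G(\mathbbm{1}_O) = \sum_{b\in T(O)}F(b) = \sum_{k\geq 1}(-1)^k f_k(O)$ (as in the proof of \Cref{cor:indecomp_zero}), the double sum collapses to
\[
w_n(Z) \;=\; -\frac{1}{M_Z}\sum_{O\in\widehat{\mathcal{O}}(Z)} G(\mathbbm{1}_O) \;=\; -\frac{1}{M_Z}\sum_{O\in\widehat{\mathcal{O}}(Z)}\ \sum_{k\geq 1}(-1)^k f_k(O).
\]

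The conclusion is now quick. By \Cref{cor:indecomp_zero} (equivalently \Cref{thm:mobius_inv_cycles}), $\sum_{k\geq 1}(-1)^k f_k(O) = G(\mathbbm{1}_O) = 0$ for every connected Eulerian digraph $O$ that is not a single directed cycle. The underlying multigraph of a directed cycle is a simple cycle of length $\geq 3$ or a digon, so if $Z$ is decomposable --- hence neither --- then every $O\in\widehat{\mathcal{O}}(Z)$ is a connected non-cycle Eulerian digraph and $w_n(Z) = 0$: this is (a). If instead $Z$ is a digon or a simple cycle of length $\geq 3$, then $T(O)$ is trivial and each Eulerian orientation is a directed cycle with $\sum_{k}(-1)^k f_k(O) = -f_1(O) = -1$; since there are two Eulerian orientations, and $M_Z = 2$ for a digon while $M_Z = 1$ for a cycle, this gives $w_n(Z) = 1$ and $w_n(Z) = 2$ respectively, which is (b). Feeding (a)--(b) back into the specialized hypergraph formula leaves exactly the sum over elementary infragraphs, which --- after the regrouping and the identity $|E(X)| = |V(X)|$ --- is the statement of \Cref{thm:harary_sachs_ordinary}.

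The vanishing is immediate once the identity $w_n(Z) = -M_Z^{-1}\sum_{O}\sum_k(-1)^k f_k(O)$ is in hand, so the real obstacle is everything that precedes it: correctly unwinding the associated coefficient $C_{\mathbf{S}}$ and the normalizing factorials $M_X$, $K_D$, $N_D$, $\alpha_{\mathbf{S}}$ along with the equivalence relations $\approx$, $\equiv$, $\sim$; carrying out the Fubini step over $\mathcal{S}(Z)$ and $T(O)$ carefully; and then tracking all the signs so that a digon contributes the classical $K_2$-term $(-1)^{1}2^{0}$, a cycle of length $\geq 3$ contributes $(-1)^{1}2^{1}$, and the per-component products reassemble into the global $(-1)^{c(X)}2^{z(X)}$ of \Cref{thm:harary_sachs_ordinary}.
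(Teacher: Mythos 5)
Your proposal is correct and follows essentially the same route as the paper: specialize \Cref{thm:harary_sachs_hyper} to $k=2$, rewrite $w_n$ via \Cref{lem:orientations_count} and a Fubini exchange over partitions and Eulerian orientations (the paper's \Cref{lem:orientations_decomp_circuits}) into $-M_Z^{-1}\sum_{O}\sum_{k}(-1)^k f_k(O)$, and kill the decomposable components with \Cref{cor:indecomp_zero}, leaving only the elementary infragraphs identified by Veblen's theorem. The only difference is that you also carry out the endgame explicitly --- the factorization of $w_n$ over components and the values $w_n=1$ for a digon and $w_n=2$ for a cycle of length $\geq 3$ --- which the paper's \Cref{thm:weight_zero} and its closing paragraph leave to the reader.
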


We aim to deduce \Cref{thm:harary_sachs_ordinary}, the Harary-Sachs Theorem for 2-Graphs, from \Cref{thm:harary_sachs_hyper}. We first introduce some preliminary definitions and lemmas. The following lemma is from \cite[p.~7]{cc2}, deriving a simpler formula for the associated coefficient, in the case of a multi-graph of rank $k=2$:
\begin{lemma}
\label{lem:orientations_count}
For a Veblen multi-graph $X$ of rank $k=2$, we have
$$ 
C_X = \dfrac{ | \mathfrak{C}\ev{ X } | }{ M_X } 
$$
\end{lemma}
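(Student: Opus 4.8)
The plan is to unwind the definition of the associated coefficient $C_X$ for a connected rank-$2$ Veblen multi-graph $X$ (the only case in which $C_X$ is defined as a single quantity) and then apply, one after another, the counting identities recorded in \Cref{rmk:rootings_and_orientations} and \Cref{rmk:approx_remark}. By definition, $C_X = \sum_{\mathbf{R}\in\overline{\mathfrak{R}}(X)} |\mathbf{R}|\cdot\frac{|\mathfrak{C}(D_\mathbf{R})|}{N_{D_\mathbf{R}}}$. First I would substitute $|\mathbf{R}| = N_{D_\mathbf{R}}/K_{D_\mathbf{R}}$ from \Cref{rmk:rootings_and_orientations}(1); the factor $N_{D_\mathbf{R}}$ cancels, leaving $C_X = \sum_{\mathbf{R}\in\overline{\mathfrak{R}}(X)} |\mathfrak{C}(D_\mathbf{R})|/K_{D_\mathbf{R}}$.

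Next, by \Cref{rmk:rootings_and_orientations}(2) the assignment $\mathbf{R}\mapsto D_\mathbf{R}$ is a bijection between $\overline{\mathfrak{R}}(X)$ and the set $\widehat{\mathcal{O}}(X)/\!\approx$ of Eulerian orientations of $X$ taken up to $\approx$, so reindexing gives $C_X = \sum_{[O]\in\widehat{\mathcal{O}}(X)/\approx} |\mathfrak{C}(O)|/K_O$. Then I would invoke \Cref{rmk:approx_remark}(2), namely $|[O]_{\approx}| = M_X/K_O$, to rewrite $1/K_O = |[O]_{\approx}|/M_X$, obtaining $C_X = \frac{1}{M_X}\sum_{[O]} |[O]_{\approx}|\cdot|\mathfrak{C}(O)|$. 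Since $\approx$ is a vertex-fixing bijective relabeling of the edge set, it carries Eulerian trails to Eulerian trails and commutes with cyclic rotation, so $|\mathfrak{C}(O)|$ is constant on each class $[O]_{\approx}$; hence $\sum_{[O]} |[O]_{\approx}|\cdot|\mathfrak{C}(O)| = \sum_{O\in\widehat{\mathcal{O}}(X)}|\mathfrak{C}(O)|$.

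The only step requiring a genuine (if brief) argument is the identity $\sum_{O\in\widehat{\mathcal{O}}(X)}|\mathfrak{C}(O)| = |\mathfrak{C}(X)|$. Here I would observe that every Eulerian circuit of the undirected multi-graph $X$ canonically induces an orientation of $X$: traversing the closed trail assigns to each edge $\{u,v\}$ the direction in which it is used, and this is well defined precisely because $\mathfrak{C}(X)$ is the quotient of Eulerian trails by \emph{cyclic rotation only}, not by reversal. This produces a bijection $\mathfrak{C}(X)\to\bigsqcup_{O\in\mathcal{O}(X)}\mathfrak{C}(O)$ whose inverse simply forgets the orientation, and since $\mathfrak{C}(O)=\emptyset$ unless $O$ is Eulerian, the right-hand side is $\bigsqcup_{O\in\widehat{\mathcal{O}}(X)}\mathfrak{C}(O)$. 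Chaining the displayed equalities then yields $C_X = |\mathfrak{C}(X)|/M_X$, as claimed. I expect the bookkeeping with the two equivalence relations $\equiv$ on rootings and $\approx$ on (di)graphs to be the only place where attention is needed; everything else is a direct substitution using the already-established remarks.
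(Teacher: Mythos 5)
Your proposal is correct and is essentially the paper's own argument run in the opposite direction: the paper starts from $\tfrac{1}{M_X}|\mathfrak{C}(X)|$, splits the circuits over Eulerian orientations, groups those by $\approx$ via the correspondence with $\overline{\mathfrak{R}}(X)$, and applies the same two counting identities $|[O]_{\approx}| = M_X/K_O$ and $|\mathbf{R}| = N_{D_\mathbf{R}}/K_{D_\mathbf{R}}$ to arrive at $C_X$. The chain of substitutions and the justification of $\sum_{O\in\widehat{\mathcal{O}}(X)}|\mathfrak{C}(O)| = |\mathfrak{C}(X)|$ match what the paper does (the latter it states in a single sentence), so no changes are needed.
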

\begin{proof}
Each Eulerian circuit of $X$ gives rise to an orientation of $X$, which we can group by equivalence under $\approx$. Therefore,
\begin{align*}
& \dfrac{1}{M_X} | \mathfrak{C} \ev{ X } | = \dfrac{1}{M_X} \sum_{O \in \widehat{\mathcal{O}}\ev{X} } | \mathfrak{C} \ev{ O } |  = \dfrac{1}{M_X}\sum_{ \mathbf{R} \in \overline{\mathfrak{R}}\ev{ X } } \sum_{\substack{ O \in \widehat{\mathcal{O}}\ev{X} \\ O \approx D_\mathbf{ R } } }  | \mathfrak{C} \ev{ O } | \\
&  = \dfrac{1}{M_X}\sum_{ \mathbf{R} \in \overline{\mathfrak{R}}\ev{ X } } |\{ O \in \widehat{\mathcal{O}}\ev{X} : O \approx D_{\mathbf{R}}\}| \cdot | \mathfrak{C}\ev{D_\mathbf{R}} | \\
& \text{ as $| \mathfrak{C}\ev{O} | = |\mathfrak{C}\ev{D_\mathbf{R}}|$ for any orientation $O\approx D_\mathbf{R}$} \\ 
& = \dfrac{1}{M_X}  \sum_{ \mathbf{R} \in \overline{\mathfrak{R}}\ev{ X } } \dfrac{M_X}{K_{D_\mathbf{R}}} \cdot | \mathfrak{C}\ev{D_\mathbf{R}} | && \hspace{-5cm}\text{by \Cref{rmk:approx_remark} } \\ 
& = \sum_{ \mathbf{R} \in \overline{\mathfrak{R}}\ev{ X } } \dfrac{N_{D_\mathbf{R}}}{K_{D_\mathbf{R}}} \cdot \dfrac{ | \mathfrak{C}\ev{D_\mathbf{R}} | }{ N_{D_\mathbf{R}} }  = \sum_{ \mathbf{R} \in \overline{\mathfrak{R}}\ev{ X } } | \mathbf{R} | \cdot \dfrac{ | \mathfrak{C}\ev{D_\mathbf{R}} | }{ N_{D_\mathbf{R}} } = C_X.
\end{align*}
\end{proof}

In what follows, we will be interested in the number of different ways an Eulerian orientation $O \in \widehat{\mathcal{O}}\ev{X}$ of a Veblen hypergraph $X$ may be obtained from partitions of $X$ into sub-multi-graphs. Hence, we define:
$$
\mathcal{S}_t \ev{ O } = \{ \ev{ S, P }: S =  \{ X_1, \ldots, X_t\} \in \mathcal{S}\ev{X}, \ P = \{ O_1, \ldots, O_t \} , \ O_i \in \widehat{\mathcal{O}} \ev{ X_i } ,\ O = \bigsqcup_{i=1}^{t} O_i \} 
$$
In particular, note that $\mathcal{S}_1 \ev{ O } = \{ \ev{ \{ X \}, \{ O \} } \}$. We now argue that there is a correspondence between decompositions of the Eulerian circuits of $O$ into $t\geq 1$ parts and tuples of Eulerian circuits obtained from elements of $\mathcal{S}_t \ev{ O } $, for each fixed Eulerian orientation $ O \in \widehat{\mathcal{O}}\ev{X} $:
\begin{lemma}
\label{lem:orientations_decomp_circuits}
For $t\geq 1$, a multi-graph $X$ of rank $2$ and an Eulerian orientation $O \in \widehat{\mathcal{O}}\ev{X}$, we have
$$ 
| \mathfrak{C}_t\ev{ O } | = \sum_{  \substack{ \ev{ S,P } \in \mathcal{S}_t \ev{ O }  \\ P = \{ O_1, \ldots, O_t \} } } \prod_{i=1}^{t} | \mathfrak{C}\ev{ O_i } | 
$$
\end{lemma}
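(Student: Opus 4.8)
The plan is to count $\mathfrak{C}_t(O)$ by sorting its elements according to the edge-partition of $O$ that they induce, and to show that each fibre of this sorting has size $\prod_{i=1}^t|\mathfrak{C}(O_i)|$; summing over fibres then yields the identity. Equivalently, I would establish a bijection
\[
\mathfrak{C}_t(O)\;\cong\;\bigsqcup_{\substack{(S,P)\in\mathcal{S}_t(O)\\ P=\{O_1,\dots,O_t\}}}\mathfrak{C}(O_1)\times\cdots\times\mathfrak{C}(O_t),
\]
the product on the right being over the set $P$ (listing the $O_i$, say, in increasing order of their least edge with respect to a fixed linear order on $E(X)$).

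First I would describe the sorting map $\mathfrak{C}_t(O)\to\mathcal{S}_t(O)$. Given $\{[\mathbf{w}_1]_\rho,\dots,[\mathbf{w}_t]_\rho\}\in\mathfrak{C}_t(O)$, put $A_i:=E(\mathbf{w}_i)$; by definition these are pairwise disjoint with $\bigsqcup_iA_i=E(O)$. Let $O_i$ be the sub-digraph of $O$ supported on $A_i$ and $X_i$ its underlying multi-graph. Since $\mathbf{w}_i$ is a single closed trail exhausting $A_i$, the digraph $O_i$ is balanced and connected, hence $X_i$ has all degrees even and is connected, i.e. $X_i\in\mathcal{V}^{1}$; moreover $[\mathbf{w}_i]_\rho\in\mathfrak{C}(O_i)$, so $O_i\in\widehat{\mathcal{O}}(X_i)$. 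Thus $S:=\{X_1,\dots,X_t\}\in\mathcal{S}(X)$, $P:=\{O_1,\dots,O_t\}$, $O=\bigsqcup_iO_i$, and $(S,P)\in\mathcal{S}_t(O)$; this defines the map.

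Next I would compute the fibre over a fixed $(S,P)$ with $P=\{O_1,\dots,O_t\}$. Because the edge sets $E(O_1),\dots,E(O_t)$ are pairwise disjoint and distinct, a set $\{c_1,\dots,c_t\}$ of circuits of $O$ partitioning $E(O)$ into the blocks $E(O_i)$ is precisely the same datum as a choice, for each $i$, of a closed trail of $O$ using exactly the edges of $E(O_i)$ — that is, of an element $c_i\in\mathfrak{C}(O_i)$ — and distinct tuples of such choices give distinct sets. Hence the fibre over $(S,P)$ is in bijection with $\mathfrak{C}(O_1)\times\cdots\times\mathfrak{C}(O_t)$, of cardinality $\prod_{i=1}^t|\mathfrak{C}(O_i)|$, provided every such set actually lies in $\mathfrak{C}_t(O)$; summing over $(S,P)\in\mathcal{S}_t(O)$ then gives the claimed equality.

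The hard part is exactly that proviso: given $c_i\in\mathfrak{C}(O_i)$ for $i=1,\dots,t$, one must verify that $\{c_1,\dots,c_t\}\in\mathfrak{C}_t(O)$, i.e. that representatives $\mathbf{v}_1,\dots,\mathbf{v}_t$ can be combined, by successive insertions in the sense of \Cref{def:insertion_and_concatenation}, into a single closed trail exhausting $E(O)$ — necessarily an Eulerian circuit of $O$. For this I would use that $O$ is connected (it admits an Eulerian circuit): form the intersection graph $\Gamma$ on $\{1,\dots,t\}$ with $i\sim j$ iff $V(\mathbf{v}_i)\cap V(\mathbf{v}_j)\neq\emptyset$; given $i\neq j$, trace an undirected path in $O$ from a vertex of $\mathbf{v}_i$ to a vertex of $\mathbf{v}_j$, and note that each edge of the path lies in exactly one $\mathbf{v}_\ell$ and that at each internal vertex the two incident path-edges lie in trails sharing that vertex, so the indices met along the path form a walk in $\Gamma$ from $i$ to $j$. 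Thus $\Gamma$ is connected, and the standard splicing argument — repeatedly insert a closed trail into a $\Gamma$-neighbour at a common vertex — merges $\mathbf{v}_1,\dots,\mathbf{v}_t$ into the required Eulerian circuit; this is the mechanism underlying the cycle-sequence construction of \cite{co2}. I expect this connectivity/splicing step to be the only substantive point, the rest being bookkeeping about disjoint edge sets.
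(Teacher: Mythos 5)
Your proof is correct and follows essentially the same route as the paper's: both exhibit the bijection $(z_1,\dots,z_t)\mapsto\{z_1,\dots,z_t\}$ from $\bigsqcup_{(S,P)}\mathfrak{C}(O_1)\times\cdots\times\mathfrak{C}(O_t)$ onto $\mathfrak{C}_t(O)$. The paper simply asserts bijectivity ``by the definition of $\mathfrak{C}_t(O)$ and $\mathcal{S}_t(O)$,'' whereas you additionally spell out the connectivity-and-splicing verification that the paper leaves implicit.
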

\begin{proof}
Let $t\geq 1$ be fixed. Define the mapping,
\begin{align*}
f: \bigsqcup_{  \substack{ \ev{ S,P } \in \mathcal{S}_t \ev{ O }  \\ P = \{ O_1, \ldots, O_t \} } }  \mathfrak{C}_1\ev{ O_1 }\times \cdots \times \mathfrak{C}_1\ev{ O_t }  & \rightarrow \mathfrak{C}_t\ev{ O }
\end{align*}
by $f \ev{ z_1,\ldots,z_t } = \{ z_1, \ldots, z_t \} $. This mapping is a bijection, by the definition of $\mathfrak{C}_t\ev{O}$ and $\mathcal{S}_t\ev{O}$.
\end{proof}

Finally, we state and prove the main theorem of this section. 
\begin{theorem}
\label{thm:weight_zero}
For a connected decomposable Veblen multi-graph $X$ of rank $k = 2$, 
$$w_n\ev{X} =0$$
\end{theorem}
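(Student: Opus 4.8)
The plan is to unwind the definition of $w_n(X)$ for rank $k=2$ into a signed sum over partitions of $E(X)$ into connected Veblen sub-multi-graphs, reorganize that sum as a double sum over Eulerian orientations of $X$ together with their partitions into circuits, and then apply \Cref{cor:indecomp_zero} orientation-by-orientation.

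\emph{Step 1: reduce to a sum over partitions into circuits.} First I would specialize \Cref{thm:harary_sachs_hyper}(ii) to $k=2$: since $\ev{-(k-1)^n}^{c(\mathbf{S})} = (-1)^{c(\mathbf{S})}$, we have $w_n(X) = -\sum_{\mathbf{S}\in\widetilde{\mathcal{S}}(X)} (-1)^{c(\mathbf{S})}\alpha_{\mathbf{S}}^{-1} C_{\mathbf{S}}$. Using $|\mathbf{S}| = M_X/(M_{\mathbf{S}}\alpha_{\mathbf{S}})$, the multiplicativity $C_{\mathbf{S}} = \prod_{A\in S} C_A$ with $M_{\mathbf{S}}=\prod_{A\in S}M_A$, and \Cref{lem:orientations_count} in the form $C_A = |\mathfrak{C}(A)|/M_A$, this collapses to $\alpha_{\mathbf{S}}^{-1} C_{\mathbf{S}} = (|\mathbf{S}|/M_X)\prod_{A\in S}|\mathfrak{C}(A)|$. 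Since $|\mathfrak{C}(\cdot)|$ and the number of parts are invariant under parallelism-preserving permutations of $E(X)$, the weight $(-1)^{|S|}\prod_{A\in S}|\mathfrak{C}(A)|$ is constant on each $\unsim$-class, so summing $|\mathbf{S}|$ copies over class representatives rewrites the sum over $\widetilde{\mathcal{S}}(X)$ as one over $\mathcal{S}(X)$:
\[ w_n(X) = -\frac{1}{M_X}\sum_{S\in\mathcal{S}(X)} (-1)^{|S|}\prod_{A\in S}|\mathfrak{C}(A)|. \]

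\emph{Step 2: reorganize by Eulerian orientations.} Next I would split each factor using $|\mathfrak{C}(A)| = \sum_{O\in\widehat{\mathcal{O}}(A)}|\mathfrak{C}(O)|$ (each Eulerian circuit of the undirected part $A$ carries a canonical orientation; cf.\ the proof of \Cref{lem:orientations_count}) and expand the product over $A\in S$. The key observation is that choosing an Eulerian orientation $O_A$ of each part of a partition $S$ assembles into a single orientation $O = \bigsqcup_{A\in S}O_A$ of $X$, which is Eulerian because in- and out-degrees balance at every vertex (they do in each $O_A$) and $X$ is connected. This yields a weight-matching bijection between such data and $\bigsqcup_{O\in\widehat{\mathcal{O}}(X)}\mathcal{S}_t(O)$ with $t=|S|$, so after reindexing by $O$ and $t$ and applying \Cref{lem:orientations_decomp_circuits},
\[ \sum_{S\in\mathcal{S}(X)} (-1)^{|S|}\prod_{A\in S}|\mathfrak{C}(A)| = \sum_{O\in\widehat{\mathcal{O}}(X)}\ \sum_{t\ge 1} (-1)^t |\mathfrak{C}_t(O)|. \]

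\emph{Step 3: apply the cancellation property.} Finally I would evaluate each inner sum. Fix $O\in\widehat{\mathcal{O}}(X)$: if $O$ is not a single directed cycle, \Cref{cor:indecomp_zero} gives $\sum_{t\ge 1}(-1)^t|\mathfrak{C}_t(O)| = 0$; if $O$ is a directed cycle, its only partition into circuits is the trivial one, so the sum equals $-1$. Hence the right-hand side above equals $-\,|\{O\in\widehat{\mathcal{O}}(X): O \text{ is a directed cycle}\}|$, and it remains to see this set is empty. If some $O\in\widehat{\mathcal{O}}(X)$ were a directed cycle, then $X$ would be a connected multi-graph with every vertex of degree $2$, hence either a cycle $C_d$ with $d\ge 3$ or a pair of parallel edges; in either case the only non-empty Veblen sub-multi-graph of $X$ is $X$ itself, i.e.\ $X$ is indecomposable, contradicting the hypothesis. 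Thus the sum vanishes and $w_n(X)=0$. The step I expect to be the main obstacle is the bijective bookkeeping in Step 2 — verifying that ``(partition of $E(X)$ into connected Veblen parts, tuple of Eulerian orientations of those parts)'' corresponds exactly to ``(Eulerian orientation $O$ of $X$, element of $\mathcal{S}_{|S|}(O)$)'' with signs and products of circuit-counts lining up, and in particular that gluing Eulerian orientations of the parts of a partition of a \emph{connected} graph always produces an Eulerian orientation of the whole. Steps 1 and 3 amount to substitutions into the cited results.
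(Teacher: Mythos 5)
Your proposal is correct and follows essentially the same route as the paper's own proof: rewrite $w_n(X)$ as a sum over $\mathcal{S}(X)$ using $|\mathbf{S}| = M_X/(M_{\mathbf{S}}\alpha_{\mathbf{S}})$ and \Cref{lem:orientations_count}, regroup by Eulerian orientations via $\mathcal{S}_t(O)$ and \Cref{lem:orientations_decomp_circuits}, and kill each inner sum with \Cref{cor:indecomp_zero}. Your Step 3 is in fact slightly more careful than the paper, which simply asserts at the outset that no orientation of a decomposable connected $X$ is a directed cycle, whereas you justify it.
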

\begin{proof}
For any connected $X \in \mathcal{V}^{1} $, if $X$ is a decomposable infragraph, then, clearly, no orientation of $X$ is a directed cycle.
\begin{align*}
& w_n\ev{X} = \sum_{ \mathbf{S} \in \widetilde{\mathcal{S}} \ev{ X } } \ev{ - 1 }^{c\ev{\mathbf{S}}} \dfrac{1}{\alpha_{ \mathbf{S} } } C_\mathbf{S}  =  \sum_{ S \in \mathcal{S} \ev{ X } } \ev{ - 1 }^{c\ev{S}} \dfrac{ M_S }{ M_X } C_S  && \hspace{-1cm} \text{ since $| \mathbf{S} | = \dfrac{M_X}{M_\mathbf{S} \cdot \alpha_\mathbf{S} } $} \\ 
& = \dfrac{1}{ M_X } \sum_{ t\geq 1 } \ev{ -1 }^{t} \sum_{ S =  \{ X_1, \ldots, X_t \} \in \mathcal{S}_t \ev{ X } }  \prod_{ i=1 }^{t} M_{X_i} C_{X_i} && \hspace{-2cm}\text{as $ M_S $ and $C_S$ are multiplicative} \\
& = \dfrac{1}{ M_X } \sum_{ t\geq 1 } \ev{ -1 }^{t} \sum_{ S =  \{ X_1, \ldots, X_t \} \in \mathcal{S}_t \ev{ X } }   \prod_{ i=1 }^{t} |\mathfrak{C}\ev{X_i}| && \hspace{-1cm} \text{ by \Cref{lem:orientations_count}} \\
& = \dfrac{1}{ M_X } \sum_{ t\geq 1 } \ev{ -1 }^{t} \sum_{ S =  \{ X_1, \ldots, X_t \} \in \mathcal{S}_t \ev{ X } }   \prod_{ i=1 }^{t} \sum_{ O \in \widehat{\mathcal{O}}\ev{X_i} } | \mathfrak{C}\ev{ O_i } |  \\
& = \dfrac{1}{ M_X } \sum_{ t\geq 1 } \ev{ -1 }^{t}  \sum_{ O \in \widehat{\mathcal{O}}\ev{X} }  \sum_{  \substack{ \ev{ S,P } \in \mathcal{S}_t \ev{ O }  \\ P = \{ O_1, \ldots, O_t \} } }  \prod_{i=1}^{t} | \mathfrak{C}\ev{ O_i } |  \\
& = \dfrac{1}{ M_X }  \sum_{ O \in \widehat{\mathcal{O}}\ev{X} }  \sum_{ t\geq 1 } \ev{ -1 }^{t} |\mathfrak{C}_{t}\ev{O}|  
\end{align*}
which is equal to zero, by \Cref{cor:indecomp_zero}, provided $X$ is decomposable.
\end{proof}
By \Cref{thm:weight_zero} and \Cref{thm:harary_sachs_hyper}, for a graph $\mathcal{G}$ of rank $2$, the only non-trivial contributions to the characteristic polynomial $\chi_{\mathcal{G}}\ev{t}$ are from indecomposable infragraphs $X \in \textup{Inf}\ev{\mathcal{G}}$. On the other hand, by Veblen's Theorem (\cite[p.~87]{veblen}) those indecomposable infragraphs are exactly the \textit{elementary subgraphs}, i.e., a disjoint union of edges and cycles of length $\geq 3$. Therefore, the classical Harary-Sach Theorem for graphs (\Cref{thm:harary_sachs_ordinary}) follows. 

%

\section{Acknowledgement}
We thank Erik Panzer for pointing out the connection between (\ref{eqn:cancellation}) and the Martin polynomial. 

\bibliography{bibliography.bib}

\end{document}